\setlist{itemsep=0.4pt,topsep=0pt}
\setlist[itemize]{topsep=0pt}
\setlist[enumerate]{topsep=0pt}
\newtheorem{theorem}{Theorem}[section]
\theoremstyle{definition}
\newtheorem{remark}[theorem]{Remark}
\newtheorem*{nota*}{Notation}
\newtheorem{nota}[theorem]{Notation}
\theoremstyle{theorem}
\newtheorem{defi}[theorem]{Definition}
\newtheorem{lemma}[theorem]{Lemma}
\newtheorem{fact}[theorem]{Fact}
\newtheorem{prop}[theorem]{Proposition}
\newtheorem{cor}[theorem]{Corollary}
\renewcommand{\c}{\mathbb{M}}
\renewcommand{\P}{\mathbb{P}}
\newcommand{\tp}{\mathrm{tp}}
\newcommand{\acl}{\mathrm{acl}}
\newcommand{\dcl}{\mathrm{dcl}}
\newcommand{\tpt}{$\mathrm{TP}_2$\,}
\newcommand{\ntpt}{$\mathrm{NTP}_2$\,}
\newcommand{\nsop}{$\mathrm{NSOP}_1$\,}
\newcommand{\sop}{$\mathrm{SOP}_1$\,}
\newcommand{\nip}{$\mathrm{NIP}$\,}
\newcommand{\ov}{\overline}
\newcommand{\bdd}{\mathrm{bdd}}
\def\Ind#1#2{#1\setbox0=\hbox{$#1x$}\kern\wd0\hbox to 0pt{\hss$#1\mid$\hss}
\lower.9\ht0\hbox to 0pt{\hss$#1\smile$\hss}\kern\wd0}
\def\ind{\mathop{\mathpalette\Ind{}}}
\def\Notind#1#2{#1\setbox0=\hbox{$#1x$}\kern\wd0\hbox to 0pt{\mathchardef
\nn=12854\hss$#1\nn$\kern1.4\wd0\hss}\hbox to
0pt{\hss$#1\mid$\hss}\lower.9\ht0 \hbox to
0pt{\hss$#1\smile$\hss}\kern\wd0}
\renewenvironment{proof}{\vspace{-0.25cm}
{\bf Proof} }{\hfill $\Box$}
\begin{document}
\title{Model theory of Steiner triple systems} 

\date{\today}
\author{Silvia Barbina, Enrique Casanovas \thanks{Research partially supported by grant MTM2014-59178P from \textit{Ministerio de Econom\'{\i}a, Industria y Competitividad}, Spain. The first author has been supported by a series of Santander Mobility Scholarships.}}

\maketitle

\begin{abstract} 
A Steiner triple system is a set $S$ together with a collection $\mathcal{B}$ of subsets of $S$ of size~3 such that any two elements of $S$ belong to exactly one element of $\mathcal{B}$.  It is well known that the class of finite Steiner triple systems has a Fra\"{\i}ss\'e limit $M_{\mathrm{F}}$. Here we show that the theory $T^\ast_\mathrm{Sq}$ of $M_{\mathrm{F}}$ is the model completion of the theory of Steiner triple systems. We also prove that  $T^\ast_\mathrm{Sq}$ is not small and it has quantifier elimination, \tpt, \nsop, elimination of hyperimaginaries and weak elimination of imaginaries.
\end{abstract}

\section{Introduction and preliminaries}

A \textit{Steiner triple system} (STS) is a set $A$ together with a set $\mathcal{B}$ of subsets of $A$ of size 3, called \textit{blocks}, such that every two elements of $A$ belong to exactly one element of $\mathcal{B}$. When the set $A$ is finite, we say that the STS is finite; an STS is infinite otherwise. 
It is well known that a necessary and sufficient condition for the existence of an STS of finite cardinality $n$ is that $n \equiv 1 \mbox{ or } 3 \pmod 6$.

The literature on finite Steiner triple systems is vast; \cite{colbournrosa} gives an encyclopedic account of themes and results in the area. On the other hand, far fewer results have been obtained on \textit{infinite} STSs. Until recently, the interest arose in response to questions about automorphism group actions, or in order to construct examples with combinatorial properties that are hard to obtain in the finite case -- for instance, \cite{PJCorbit} gives an orbit theorem for infinite STSs; \cite{thomas}~proves that if $S$ is an infinite STS in which any triangle (a set of three points not in a block) is contained in a finite subsystem, and the automorphism group of $S$ acts transitively on triangles, then $S$ is a projective space over $\mathrm{GF}(2)$ or an affine space over $\mathrm{GF}(3)$; \cite{chicotetal} gives a construction of $2^\omega$ non-isomorphic countable STSs that are uniform and $r$-sparse for $r \geq 4$; in \cite{cameronwebb}, uncountably many non-isomorphic perfect countable STSs are constructed.

The results in this paper are motivated by a model theoretic viewpoint on the countable universal homogeneous locally finite Steiner quasigroup $M_{\mathrm{F}}$, whose existence was first noted in~\cite{thomas}, and which is the Fra\"{\i}ss\'e limit of the class of finite Steiner quasigroups. The interest in \cite{thomas} is permutation-group theoretic: no model theoretic treatment of this Fra\"{\i}ss\'e limit has appeared in the literature so far.

From the point of view of model theory, STSs can be viewed both as relational and as functional structures, in the sense of Definitions~\ref{defSTS} and~\ref{defquasigroup} below. In this paper we distinguish between Steiner triple systems (relational) and \textit{Steiner quasigroups} (functional). We prove that the theory 
of $M_{\mathrm{F}}$ is in fact the model completion of the theory of Steiner triple systems, viewed as functional structures, and we describe several of its properties. 

In Section~\ref{modelcompletion} we give 
 an axiomatisation of the class of existentially closed Steiner quasigroups, and we show that the resulting theory $T^\ast_\mathrm{Sq}$ is complete, has quantifier elimination, and it is the model completion of the theory of all Steiner quasigroups. In Section~\ref{smallness} we show that $T^\ast_\mathrm{Sq}$ is not small, and in Section~\ref{fraisse} we show that the Fra\"{\i}ss\'e limit of the class of finite Steiner quasigroups is a prime model of $T^\ast_\mathrm{Sq}$. 
We then give a characterisation of algebraic closure and we prove that $T^\ast_\mathrm{Sq}$ eliminates the quantifier $\exists^\infty$. In Section~\ref{amal} we prove certain results concerning amalgamation and joint consistency of formulas. These results are used in Section~\ref{tp2andnsop1} to classify $T^\ast_\mathrm{Sq}$ in terms of the dividing lines of first order theories: $T^\ast_\mathrm{Sq}$ is a new example of a theory with TP$_2$ and NSOP$_1$. In Section~\ref{section8} we use the approach developed in \cite{Conant17} to show that $T^\ast_\mathrm{Sq}$ has elimination of hyperimaginaries and weak elimination of imaginaries.

As an incidence structure whose theory is \tpt and \nsop, the structure $M_F$ in this paper is an interesting counterpart to the existentially closed incidence structures omitting the complete incidence structure $K_{m,n}$ in \cite{ConantKruckman17}. Rather like $T^\ast_\mathrm{Sq}$, the theories $T_{m,n}$ in  \cite{ConantKruckman17} are also \tpt and properly \nsop when $m,n \geq 2$, and they do not have a countable saturated model. A notable difference is that the existence of a prime model for $T_{m,n}$ is unknown, and shown in \cite{ConantKruckman17} to be a necessary condition for a positive answer to the open problem of whether every finite model of $T_{m,n}^p$ embeds in a finite model of $T_{m,n}^c$. The analogous property for STSs is well known and it ensures the joint embedding and amalgamation properties for the class of finite STSs, and hence the existence of the Fra\"{\i}ss\'e limit $M_F$, which turns out to be a prime model of its theory. 

Steiner triple systems are a subclass of the class of Steiner systems, which are defined similarly: in a Steiner system $S(t,k,n)$, the underlying set has size $n$, blocks are subsets of size $k$ and each $t$-element subset (for $t < k$) is contained in exactly one block. In a recent preprint \cite{balpao}, Baldwin and Paolini use a version of an amalgamation technique due to Hrushovski to construct $2^{\aleph_0}$ strongly minimal Steiner systems with blocks of size $k$ for every integer $k$. These provide counterexamples to Zilber's trichotomy conjecture with 
 a natural combinatorial characterization that is independent of the conjecture. By contrast, standard Fra\"{\i}ss\'e amalgamation gives a structure, our $M_F$, that is a foremost example to consider model-theoretically, and sits at the opposite end of the model theoretic spectrum from its counterparts in \cite{balpao}. Another striking difference is that in $T^\ast_\mathrm{Sq}$  algebraic closure coincides with definable closure. We refer the reader to \cite{balpao}, Remark~6.1 for a full comparison between the strongly miminal theories in~\cite{balpao} and $T^\ast_\mathrm{Sq}$. 
 
As mentioned, Steiner triple systems can be described both as relational and as functional structures. The choice of language determines substructures, and so, in particular, it is relevant to amalgamation.
\begin{defi} \label{defSTS}A \textbf{Steiner triple system} (STS) is a relational structure $(A,R)$  where $R$ is a ternary relation on a set $A$  such that
\begin{enumerate}
\item if  $R(a,b,c)$ then   $R(\sigma(a),\sigma(b),\sigma(c))$  for every permutation $\sigma$ of $\{a,b,c\}$;
\item  $R(a,a,b)$ iff  $a=b$;
\item for every two different $a,b\in A$ there is a unique $c$ such that $R(a,b,c)$.
\end{enumerate}
A structure $(A, R)$ is a \textbf{partial Steiner triple system}  if instead of 3  we require that for every two different $a,b\in A$ there is at most one $c\in A$ such that $R(a,b,c)$.
\end{defi}

\begin{defi} \label{defquasigroup} A \textbf{Steiner quasigroup} is a structure $(A,\cdot)$  where $\cdot$ is a binary operation on $A$ such that
\begin{enumerate}
\item $a\cdot b= b\cdot a$
\item $a\cdot a =a$
\item $a\cdot (a\cdot b)=b \,$.
\end{enumerate}
\end{defi}
Thus, in a Steiner triple system $(A, R)$ three distinct points $a, b$ and $c$ form a block if and only if $R(a,b,c)$ holds; in a Steiner quasigroup three distinct points form a block if and only if each of them is the product of the other two.

Steiner triple systems and Steiner quasigroups are essentially the same objects in the following sense.
\begin{itemize}
\item Let $(A,R)$ be a Steiner triple system and  define a binary operation $\cdot$ on $A$   as follows:  $a\cdot b$ is the unique $c\in A$ such that  $R(a,b,c)$.  Then $(A,\cdot)$ is a Steiner quasigroup.
\item Let   $(A,\cdot)$ be  a Steiner quasigroup  and let $R$ be the graph of the operation $\cdot$,  that is,
$R(a,b,c) \mbox{ iff } a\cdot b=c $. Then  $(A,R)$ is a Steiner triple system.
\end{itemize}

The correspondence between STSs and Steiner quasigroups means that, in practice, we often switch from relational to functional terminology when convenient, and we sometimes refer to the \textit{product} of two elements in a Steiner triple system.  
\begin{defi} Let $(A,R)$ be a partial STS, and let $a, b \in A$. We say that $a,b\in A$ have a \textbf{defined product} in $A$  if there is $c \in A$ such that $R(a,b,c)$. When this is the case, $c$ is said to be the \textbf{product} of $a$ and $b$.
\end{defi}

It is well known that a  finite partial STS can always be embedded in a finite STS (where embeddings are understood in the model theoretic sense, 
 so the blocks in the image of a partial STS under an embedding are the images of the  blocks in the original partial STS). This can be done in a number of different ways -- see, for example, \cite{treash}, \cite{andersenetal} and \cite{lindner}. For the purposes of this paper, the specific constructions are not relevant and it is enough to state the general result below.
\begin{fact}\label{F1} \begin{enumerate}
\item  Every  partial Steiner triple system of infinite cardinality $\kappa$ can be embedded in a Steiner triple system of cardinality $\kappa$.  
\item Every  partial finite Steiner triple system  can be embedded in a finite Steiner triple system.
 \end{enumerate}

\end{fact}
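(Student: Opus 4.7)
The plan is to handle the two parts independently, since the infinite case admits an elementary free completion while the finite case requires a more delicate combinatorial design that has no counterpart in the infinite setting.

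For part (1), my approach is to iteratively extend $(A,R)$ by adjoining fresh witnesses for missing products. Set $P_0 = A$, and build $P_{n+1}$ from $P_n$ by choosing, for each unordered pair $\{a,b\}$ of distinct points in $P_n$ with no defined product, a brand new element $c_{a,b}$ and imposing $R(a,b,c_{a,b})$ together with all its permutations. A direct check shows $P_{n+1}$ is again a partial STS: each $c_{a,b}$ is distinct from all previously chosen elements and from every element of $P_n$, so it participates in exactly one new triple and cannot clash with the uniqueness axiom. Cardinality is preserved at each step, since we adjoin at most $|P_n|^2 = \kappa$ witnesses, and iterating only $\omega$ times keeps the total cardinality at $\kappa$. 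Setting $P_\omega = \bigcup_{n<\omega} P_n$, every pair of its points lies in some $P_n$ and hence has a product in $P_{n+1}$; so $P_\omega$ is a total Steiner triple system of cardinality $\kappa$ extending $(A,R)$.

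For part (2), the free completion above cannot be adopted: the procedure does not stabilise at any finite stage, because each newly introduced $c_{a,b}$ immediately generates further pairs with no defined product. Instead, I would invoke one of the classical combinatorial constructions of Treash~\cite{treash}, Andersen--Hilton--Mendelsohn~\cite{andersenetal}, or Lindner~\cite{lindner}. Each of these embeds a given finite partial STS inside a sufficiently large finite combinatorial superstructure -- for example, by lifting to a carefully chosen finite quasigroup, or by explicitly designing the completion triples in order to reuse existing elements as witnesses -- so that after finitely many rounds every undefined pair is closed off within a finite set.

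The main obstacle for part (2) is precisely this finite termination, which is substantially harder than the infinite case and depends on the arithmetic constraint that finite STSs of order $n$ exist exactly when $n \equiv 1$ or $3 \pmod 6$. Since the specific construction is immaterial to the rest of the paper -- only the existence of such a finite embedding is used later, e.g.\ in verifying the joint embedding and amalgamation properties of the Fra\"{\i}ss\'e class -- I would treat the finite case by citation rather than reproducing any of the three combinatorial arguments.
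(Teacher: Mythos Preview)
Your proposal is correct and matches the paper's own proof almost exactly: for part~(1) the paper builds the same free completion $A_0 \subseteq A_1 \subseteq \cdots$ by adjoining a fresh witness for each missing product and takes the union, and for part~(2) it likewise cites the literature (specifically Theorem~1 in~\cite{andersenetal}) rather than reproducing the combinatorial construction. Your additional remarks on cardinality preservation and on why the free completion fails to terminate in the finite case are accurate and welcome, though the paper omits them.
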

 \begin{proof} \ 1. Suppose $\kappa$ is an infinite cardinal, and let $(A, R)$ be a partial STS of cardinality $\kappa$. We define a chain $\{(A_i, R_i) \, |\,  i < \omega \}$ of partial STSs, where $(A_0, R_0) = (A, R)$, and $(A_{i+1}, R_{i+1})$ is obtained as follows: for every (unordered) pair $\{a,b \}$ of elements of $A_i$ that do not have a defined product in $A_i$, if $a \neq b$ add a \textit{new} element $a \cdot b \notin A_i$ and put $R_{i+1}(a, b, a \cdot b)$. Define $R_{i+1}$ consistently on all permutations of $\{a, b, a\cdot b\}$. If $a= b$, put $R_{i+1}(a, a, a)$. 
Let $(B, S) = (\bigcup_{i \in \omega} A_i, \bigcup_{i\in \omega}R_i)$. It is easy to see that $(B, S)$ is an STS.
 
2. See, for example, Theorem~1 in \cite{andersenetal}.
\end{proof}

The next lemma is an immediate consequence of Fact~\ref{F1}. It is stated for Steiner quasigroups, rather than for Steiner systems, because a substructure (in the model theoretic sense) of an STS is a \textit{partial} STS, and amalgamation of STSs over a common partial STS is not possible in general. 
\begin{lemma} \label{amalclass}  The class of all Steiner quasigroups has the amalgamation property (AP) and the joint embedding property (JEP).  Likewise, the class of all finite Steiner quasigroups has AP and JEP.
\end{lemma}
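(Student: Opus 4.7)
The plan is to reduce both AP and JEP to Fact~\ref{F1} by forming the obvious set-theoretic amalgam (respectively, disjoint union) as a \emph{partial} STS, and then embedding it in a full STS, which by the correspondence between STSs and Steiner quasigroups yields a Steiner quasigroup.

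For AP, suppose $A$, $B_1$, $B_2$ are Steiner quasigroups with embeddings $f_i \colon A \hookrightarrow B_i$. Identify $A$ with its images, and let $D = B_1 \cup_A B_2$, i.e. the set-theoretic pushout. Put the operation $\cdot$ on $D$ defined by: $x\cdot y$ is $x\cdot y$ in $B_i$ when $x,y\in B_i$, and undefined when $x\in B_1\setminus A$ and $y\in B_2\setminus A$ (or vice versa). Viewing $D$ as a relational structure through the graph of $\cdot$, I would check it is a partial STS: the symmetry and idempotence conditions are inherited from $B_1$ and $B_2$; and uniqueness of the product of distinct $x,y\in D$ fails only if conflicting definitions arise on $A$, which they do not since $B_1$ and $B_2$ both extend the operation of $A$. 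Applying Fact~\ref{F1} (part~1 in the infinite case, part~2 in the finite case) embeds $D$ in an STS $C$; the corresponding Steiner quasigroup structure on $C$ is the required amalgam.

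For JEP, apply the same argument to $B_1$ and $B_2$ with $A$ taken to be empty (or a shared one-element subquasigroup, which always embeds in any Steiner quasigroup); the disjoint union is trivially a partial STS, and Fact~\ref{F1} produces a common extension. The finite variant of both properties is handled by invoking part~2 of Fact~\ref{F1} instead of part~1, since the intermediate partial STS is finite when $A,B_1,B_2$ are.

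The only genuine point to verify is that $D$ really is a partial STS, which amounts to checking that the partial operation is well-defined on $A$; this is immediate because $f_1$ and $f_2$ are embeddings of Steiner quasigroups, so they preserve products. Once this is observed, the lemma is essentially a translation of Fact~\ref{F1} into the functional language, which is the whole point of working with Steiner quasigroups rather than with Steiner triple systems in the relational sense.
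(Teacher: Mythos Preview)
Your proposal is correct and follows essentially the same approach as the paper: form the disjoint union (for JEP) or the union over the common subquasigroup (for AP), observe that in the relational language this is a partial STS, and then invoke Fact~\ref{F1} to complete it to an STS (using part~2 in the finite case). The paper's proof is simply a two-line sketch of exactly this argument, without spelling out the verification that the union is a partial STS.
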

\begin{proof} For JEP, use the fact that the disjoint union of two Steiner quasigroups, described in a relational language, is a partial STS, and Fact~\ref{F1}.

For AP, use the fact that the union of two Steiner quasigroups over a common subquasigroup, described in a relational language, is a partial STS, and Fact~\ref{F1}.
\end{proof}

\section{Model completion} \label{modelcompletion}

The  class of all Steiner quasigroups is elementary: its theory, which we denote by $T_{\mathrm{Sq}}$,  has the three universal sentences in Definition~\ref{defquasigroup} as its axioms. In this section we show that the class of existentially closed Steiner quasigroups is elementary. The resulting theory is complete, has quantifier elimination, and it is the model companion of $T_{\mathrm{Sq}}$.

As we have observed, in general the disjoint union of two STSs over a common substructure is not a partial STS, because pairs may arise with more than one product. The next definition specifies conditions on a common substructure which ensure that the disjoint union over that substructure is a partial STS. 

\begin{defi} Let $(B,R)$ be a partial  STS. We say that $A\subseteq B$  is \textbf{relatively closed} in $(B,R)$  if  for every $a,b\in A$ and $c\in B$,  if  $R(a,b,c)$, then $c\in A$. In other words, when two elements of $A$ have a product in $B$, the product belongs to $A$.

Let $(A,R)$ and $(B,S)$  be partial STSs  and let  $C\subseteq A\cap B$.  We say that  $(A,R)$ is \textbf{compatible} with  $(B,S)$ on $C$  if whenever $a,b\in C$  have a defined product $c$ in $(A,R)$, then either they have the same product in $(B,S)$ or they do not have a defined product in $(B,S)$.
\end{defi}
Clearly, if $(A,R)$ is compatible with $(B,S)$ on $C \subseteq A \cap B$, then  $(B,S)$ is compatible with $(A,R)$ on $C$, and we simply say that $(A,R)$ and $(B,S)$ are compatible on $C$.

The next lemma describes cases where the union of two partial STSs is a partial STS. 

\begin{nota} If $(B,S)$ is a partial STS and $A \subseteq B$, then $S^A$ denotes the restriction of the relation $S$ to $A$. \end{nota}

\begin{lemma}\label{mc1}\begin{enumerate}
\item  Let $(A,R)$ and $(B,S)$   be  partial STSs  that are compatible on $A\cap B$.  Then   $(A\cup B, R\cup S)$  is a partial  STS. If, moreover, $S^{A\cap B}\subseteq R^{A\cap B}$, then  $(A,R)\subseteq (A\cup B, R\cup S)$.
\item Assume $(B,R)$   and $(C,S)$  are partial STS and $A=B\cap C$ is relatively closed in $(B,R)$.  If  $(A,R^A)$ and $(C,S)$  are compatible on $A$, then  also  $(B,R)$ and $(C,S)$ are compatible on $A$, and therefore $(B\cup C, R\cup S)$  is a partial  STS.
 \end{enumerate}
\end{lemma}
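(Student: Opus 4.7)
For part 1, the plan is to verify directly that $(A\cup B, R\cup S)$ satisfies the axioms of a partial STS. Permutation symmetry and the axiom $R(a,a,b) \Leftrightarrow a=b$ are immediate from the corresponding properties of $R$ and $S$ individually, since the union of two symmetric (resp.\ anti-reflexive off the diagonal) relations inherits both. The only non-trivial axiom is uniqueness: for $a\neq b$ in $A\cup B$, at most one $c$ should satisfy $(R\cup S)(a,b,c)$. I would split into cases by the location of the pair $\{a,b\}$. If $\{a,b\}\not\subseteq B$ then no $S$-product can be defined for $a,b$ (since $S\subseteq B^3$), so uniqueness is inherited from $(A,R)$; the symmetric case is analogous. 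The remaining case is $a,b\in A\cap B$, and here compatibility of $(A,R)$ and $(B,S)$ on $A\cap B$ ensures that any $R$-product and any $S$-product of the pair must coincide, giving uniqueness. For the ``moreover'' clause, one observes that if $S(a,b,c)$ with $a,b,c\in A$, then in particular $a,b,c\in A\cap B$, so the hypothesis $S^{A\cap B}\subseteq R^{A\cap B}$ yields $R(a,b,c)$; this shows $(R\cup S)^A = R$, i.e.\ that $(A,R)$ is a substructure.

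For part 2, the task reduces to promoting compatibility from $(A,R^A)$ to $(B,R)$, after which part 1 applies directly with the roles of the two partial STSs played by $(B,R)$ and $(C,S)$, whose intersection is exactly $A$. So take $a,b\in A$ with a defined product $c$ in $(B,R)$. Because $A$ is relatively closed in $(B,R)$, the element $c$ must lie in $A$, hence $c$ is also the product of $a$ and $b$ in $(A,R^A)$. The compatibility of $(A,R^A)$ and $(C,S)$ on $A$ then forces $a,b$ either to have the same product $c$ in $(C,S)$ or no product in $(C,S)$ at all, which is precisely the compatibility of $(B,R)$ and $(C,S)$ on $A$. Invoking part 1 completes the argument.

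The main obstacle is essentially just the case distinction for uniqueness in part 1: the compatibility hypothesis is engineered so as to reconcile potentially competing $R$- and $S$-products on $A\cap B$, and the verification ultimately amounts to tracking which of the two relations is even applicable to a given pair. Nothing more delicate than careful bookkeeping is needed; part 2 is then a short reduction to part 1 using the relative-closedness assumption to trap $R$-products of elements of $A$ inside $A$.
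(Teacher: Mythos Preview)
Your proposal is correct and follows essentially the same approach as the paper: both proofs reduce part~1 to checking uniqueness in the case $a,b\in A\cap B$ via compatibility, and both handle part~2 by using relative closedness to pull the $R$-product back into $A$ and then invoking compatibility of $(A,R^A)$ with $(C,S)$. If anything, your treatment is slightly more thorough---you explicitly address the ``moreover'' clause and spell out the case split---whereas the paper leaves these as understood.
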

\begin{proof} 1.\ We must show that for $a, b \in A \cup B$ there is at most one $c \in A \cup B$ such that $R(a,b,c)$ or $S(a,b,c)$. The nontrivial case is when $a$ and $b$ are both in $A \cap B$. Since $(A,R)$  is compatible with $(B,S)$ on $A \cap B$, if $a$ and $b$ have a defined product in $(A, R)$, then they have the same defined product in $(B,S)$ and hence in $(A \cup B, R \cup S)$, or they do not have a defined product in $(B,S)$.

2.\ Suppose that $a, b \in A$ have a defined product $c$ in $(B, R)$. Since $A$ is relatively closed in $(B,R)$, we have $c \in A$. Since $(A, R^A)$ and $(C,S)$ are compatible on $A$, we have that $R(a,b,c)$ implies $S(a,b,c)$ and therefore $a$ and $b$ have the same defined product in $(B,R)$ and in $(C,S)$.
\end{proof}

The proof of the next lemma is similar in flavour to that of Lemma~\ref{mc1} and it is left to the reader.
\begin{lemma}\label{mc2}\begin{enumerate}
\item Let $\{(A_i,R_i)\mid i\in I\}$ be a family of partial STS.  If  $(A_i\cup A_j,R_i\cup R_j)$  is a partial STS for every $i,j\in I$,  then $(\bigcup_{i\in I} A_i,\bigcup_{i\in I} R_i)$  is a partial STS. If $j\in I$  and  $(A_j,R_j)\subseteq (A_i\cup A_j, R_i\cup R_j)$  for every $i\in I$, then  $(A_j,R_j)\subseteq (\bigcup_{i\in I}A_i, \bigcup_{i\in I}R_i)$.
\item Let $\{(A_i,R_i)\mid i\in I\}$ be a family of partial STS  with  common intersection $A=A_i\cap A_j$  for every two different $i,j\in I$.  Assume that $A$ is relatively closed in every $(A_i,R_i)$.
Then $(\bigcup_{i\in I}A_i,\bigcup_{i\in I}R_i)$  is a partial STS. 
\end{enumerate}
\end{lemma}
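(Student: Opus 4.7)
The strategy parallels that of Lemma~\ref{mc1}: verify the axioms of a partial STS for the union pointwise, reducing the only nontrivial clause (uniqueness of products) to pairs, and then feed this pairwise result into Lemma~\ref{mc1} for part~(2).

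For part~(1), the symmetry axiom and the axiom $R(a,a,b)\iff a=b$ transfer from each $(A_i,R_i)$ to $(\bigcup_{i\in I}A_i,\bigcup_{i\in I}R_i)$ because they are universal. The only content is uniqueness of products: if $(\bigcup_{i\in I}R_i)(a,b,c_k)$ for $k=1,2$, pick $i_1,i_2\in I$ with $R_{i_k}(a,b,c_k)$; then $a,b\in A_{i_1}\cap A_{i_2}$ and both triples lie in $R_{i_1}\cup R_{i_2}$, so the pairwise hypothesis that $(A_{i_1}\cup A_{i_2},R_{i_1}\cup R_{i_2})$ is a partial STS forces $c_1=c_2$. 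The embedding clause is analogous: $A_j\subseteq\bigcup_{i\in I}A_i$ is immediate, and if $R_i(a,b,c)$ holds with $a,b,c\in A_j$, then $(R_i\cup R_j)(a,b,c)$ sits inside $A_j^3$, so by the assumed inclusion $(A_j,R_j)\subseteq(A_i\cup A_j,R_i\cup R_j)$ we get $R_j(a,b,c)$.

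For part~(2), I verify the pairwise hypothesis of part~(1) and then invoke it. Fix $i\neq j$ in $I$; the hypothesis gives $A_i\cap A_j=A$ and that $A$ is relatively closed in $(A_i,R_i)$. The restrictions $R_i^A$ and $R_j^A$ agree, since they describe the single partial STS structure carried by the common intersection $A$, so $(A,R_i^A)$ and $(A_j,R_j)$ are trivially compatible on $A$. Lemma~\ref{mc1}(2) then upgrades this to compatibility of $(A_i,R_i)$ and $(A_j,R_j)$ on $A$, whence Lemma~\ref{mc1}(1) gives that $(A_i\cup A_j,R_i\cup R_j)$ is a partial STS. Part~(1) of the present lemma now closes the argument.

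The main point to keep in mind is that the phrase ``common intersection $A$'' is really asserting a common sub-partial-STS of all the $(A_i,R_i)$; once that is recognised, relative closure is exactly what is needed to prevent a pair in $A$ from acquiring conflicting products in different $A_i$, and the rest is a routine iteration of Lemma~\ref{mc1}.
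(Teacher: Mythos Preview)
Your proof is correct and follows exactly the approach the paper gestures at (the paper leaves the proof to the reader, remarking only that it is ``similar in flavour'' to Lemma~\ref{mc1}): reduce part~(1) to the pairwise hypothesis, and obtain part~(2) by feeding Lemma~\ref{mc1} into part~(1).

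Your closing observation is well taken and worth making explicit. Part~(2) as literally stated is false if one reads ``common intersection $A=A_i\cap A_j$'' as a condition on underlying sets only: take $A_1=\{a,b,c,d,e\}$ with sole block $\{a,b,c\}$ and $A_2=\{a,b,c,d,f\}$ with sole block $\{a,b,d\}$; then $A=\{a,b,c,d\}$ is relatively closed in each, yet $a$ and $b$ acquire two distinct products in the union. The intended reading, which you adopt and which is consistent with every application of the lemma in the paper, is that the induced structures $(A,R_i^A)$ all coincide; once that is assumed, your invocation of Lemma~\ref{mc1}(2) goes through verbatim.
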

We can now define the formulas that we use to axiomatise the class of existentially closed Steiner quasigroups.
\begin{defi}
Let $(A,R)$  be a finite partial STS, let $n=|A|$,  and let $A=\{a_1,\ldots,a_n\}$.  We define $\delta_{(A,R)}(x_1,\ldots,x_n)$  to be the conjunction of $\bigwedge_{1\leq i<j\leq n} x_i\neq x_j$  with the positive diagram of $(A,R)$  (with $x_i$ corresponding to $a_i$) written in the product language $L=\{\cdot\}$ of quasigroups, that is, the  conjunction of   all formulas of the form  $x_i\cdot x_j=x_k$  such that  $R(a_i,a_j,a_k)$.

Now let $(B,S)$ be  a  finite partial STS such that $B\supseteq A$  and $A$ is relatively closed in $(B,S)$.  Let  $n+m=|B|$  and  $B=\{a_1,\ldots,a_n,b_1,\ldots ,b_m\}$, and  consider  the  formula \[\delta_{(B,S)}(x_1,\ldots,x_n,y_1,\ldots,y_m),\]
 as defined above, for $(B,S)$, where $x_i$ corresponds to $a_i$  and $y_i$ to $b_i$.  To the pair $((B,S),A)$ we associate the  $L$-sentence
\[\forall x_1\ldots x_n\, (\delta_{(A,S^A)}(x_1,\ldots,x_n)\rightarrow \exists y_1\ldots y_m\, \delta_{(B,S)}(x_1,\ldots,x_n,y_1,\ldots,y_m))\,\]
and we define $\Delta$ as the set of all sentences of this form as $((B,S), A)$ ranges over all pairs where $(B,S)$ is a finite partial STSs and $A$ is a relatively closed subset of $B$.
\end{defi}

\begin{prop}\label{mc3}  A Steiner quasigroup  is existentially closed in the class of all Steiner quasigroups if and only if it is a model of $\Delta$.
\end{prop}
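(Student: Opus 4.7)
The plan is to prove both directions in turn, building on the amalgamation lemmas of this section and Fact~\ref{F1} to pass between partial STSs and full Steiner quasigroups.

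For the forward direction, I would fix a Steiner quasigroup $M$ that is existentially closed in the class of all Steiner quasigroups, a sentence in $\Delta$ coming from a pair $((B,S),A)$, and parameters $\bar a\in M$ realising the hypothesis $\delta_{(A,S^A)}(\bar x)$. My goal is to exhibit a Steiner quasigroup extension $N\supseteq M$ in which some tuple $\bar b$ already realises $\delta_{(B,S)}(\bar a,\bar y)$; existential closure then transfers $\bar b$ into $M$. The natural candidate for $N$ is built from the set-theoretic union of $M$ and $B$, identifying $A$ with $\bar a$, and equipped with the union of the ternary graph of $\cdot$ on $M$ with $S$. To apply Lemma~\ref{mc1} and conclude that this is a partial STS containing $M$ as a substructure, the key checks are compatibility on $A$ together with the containment of the $(B,S)$-products on $A$ inside those of $M$; both reduce to $A$ being relatively closed in $(B,S)$ and $\bar a\models\delta_{(A,S^A)}$ in $M$. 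A single application of Fact~\ref{F1} then extends this partial STS to a Steiner quasigroup $N\supseteq M$, and $B\setminus A$ supplies the required witnesses.

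For the reverse direction, I would fix $M\models\Delta$, a Steiner quasigroup extension $N\supseteq M$, parameters $\bar a\in M$, and a quantifier-free formula $\psi(\bar a,\bar y)$ (which, after reducing to disjunctive normal form, I may assume is a conjunction of atomic and negated atomic formulas) realised by some $\bar b\in N$. The task is to produce $\bar b'\in M$ with $M\models\psi(\bar a,\bar b')$. The plan is to distill a finite partial STS from the configuration: let $D\subseteq N$ consist of $\bar a$, $\bar b$, and the $N$-values of every subterm appearing in $\psi$ evaluated at $(\bar a,\bar b)$, equipped with the restriction $R_D$ to $D$ of the ternary graph of $\cdot$ on $N$. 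Then $(D,R_D)$ is a finite partial STS, and since $M$ is closed under $\cdot$, the set $A:=D\cap M$ is relatively closed in $(D,R_D)$. The tuple enumerating $A$ realises $\delta_{(A,R_D^A)}$ in $M$, so the sentence in $\Delta$ attached to $((D,R_D),A)$ produces an isomorphic copy of $(D,R_D)$ inside $M$ that is the identity on $A$; the image $\bar b'$ of $\bar b$ is the required witness tuple.

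The main obstacle is the combinatorial-syntactic care required in the reverse direction: $D$ must be closed under the subterms appearing in $\psi$ (otherwise an equation like $a\cdot(a'\cdot b)=c$ could escape the scope of $\delta_{(D,R_D)}$), and one must verify that every atomic equation and every distinctness in $\psi$ is indeed entailed by the product relations and inequalities recorded in $\delta_{(D,R_D)}$. Once this bookkeeping is set up correctly, the relative closedness of $A$ is automatic from $M$ being a Steiner quasigroup, and the rest is a straightforward invocation of Lemma~\ref{mc1} and Fact~\ref{F1}.
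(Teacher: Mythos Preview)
Your proposal is correct and follows essentially the same route as the paper's proof: the forward direction is identical, and in the reverse direction the paper flattens $\varphi$ by introducing auxiliary variables for subterms (so that all atomic formulas have the shape $u\cdot v=w$ or $u\neq v$) and then assumes without loss of generality that the witnesses lie outside $M$, whereas you achieve the same effect by closing under subterm values and setting $A=D\cap M$. These are equivalent bookkeeping choices, and your remark that the relative closedness of $A$ in $(D,R_D)$ is automatic from $M$ being a subquasigroup is exactly the point.
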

\begin{proof}  Let  $(M,\cdot)$ be an existentially closed Steiner quasigroup. We check that all the sentences in $\Delta$ hold in $(M, \cdot)$.  Let  $(B,S)$ be a  partial STS  and $A$ a  relatively closed subset, with  $|A|=n$,  $A=\{a_1,\ldots,a_n\}$, $|B|=n+m$ and $B=\{a_1,\ldots,a_n,b_1,\ldots,b_m\}$.  Assume $(M,\cdot)\models \delta_{(A,R)}(a_1^\prime,\ldots,a_n^\prime)$, where $R= S^A$.   Define $R^\prime$ on  $A^\prime=\{a_1^\prime,\ldots, a_n^\prime\}$  in such a way that the mapping $a_i\mapsto a_i^\prime$ is an isomorphism of partial STSs.  Choose $b_1^\prime,\ldots,b_m^\prime\not\in M$ and a corresponding relation $S^\prime$  on  $B^\prime=\{a_1^\prime,\ldots,a_n^\prime,b_1^\prime,\ldots,b_m^\prime\}$ so that the mapping defined by  $a_1\mapsto a_i^\prime$  and  $b_j\mapsto b_j^\prime$  is an isomorphism of partial STSs.  Then  $A^\prime$ is  relatively closed in  $(B^\prime,S^\prime)$.  Let $(M,P)$  be the STS associated to $(M,\cdot)$ -- that is, $P$ is the graph of the product in $M$.  By Lemma~\ref{mc1}, we have that $(M,P)$ and $(B^\prime,S^\prime)$ are compatible on $A^\prime =M\cap B^\prime$.
 Then  $(M\cup B^\prime, P\cup S^\prime)$  is a partial STS and so it can be extended to a Steiner triple system  $(N,P^\prime)$. The associated Steiner quasigroup $(N,\cdot)$ is an extension of $(M,\cdot)$, and  $(N,\cdot)\models \delta_{(B,S)}(a_1^\prime,\ldots,a_n^\prime,b_1^\prime,\ldots,b_m^\prime)$.  Since $(M,\cdot)$ is existentially closed in $(N,\cdot)$, we have  $(M,\cdot)\models \exists y_1\ldots y_m \, \delta_{(B,S)}(a_1^\prime,\ldots,a_n^\prime,y_1,\ldots,y_m)$, as required.

Now assume that $(M,\cdot)\subseteq (N,\cdot)$ are  Steiner quasigroups and that  $(M,\cdot)\models \Delta$, and let us check that $(M,\cdot)$ is existentially closed in $(N,\cdot)$.  Let $\varphi(x_1,\ldots,x_n,y_1,\ldots,y_m)$  be a quantifier-free formula and let $a_1,\ldots,a_n\in M$. Assume that
\[(N,\cdot)\models \exists y_1\ldots y_m\, \varphi(a_1,\ldots,a_n,y_1,\ldots,y_m)\,.\]
We want to find $b_1,\dots,b_m\in M$  such that $(M,\cdot)\models \varphi(a_1,\ldots,a_n,b_1,\ldots,b_m)$. We may assume that $\varphi$ is a conjunction of equalities and inequalities between terms of the form $t(x_1,\ldots,x_n,y_1,\ldots,y_m)$. It is easy to find a formula $\psi(x_1,\ldots,x_n,y_1,\ldots,y_m,z_1,\ldots,z_k)$ which is a conjunction of equalities of the form $u\cdot v=w$ and inequalities of the form $u\neq v$ for variables $u,v,w$,   and  such that   $\varphi(x_1,\ldots,x_n,y_1,\ldots,y_m)$ is logically equivalent to 
\[\exists z_1\ldots z_k\, \psi(x_1,\ldots,x_n,y_1,\ldots,y_m,z_1,\ldots,z_k)\, .\] 
So we may assume that $\varphi$ is a formula with this property and forget $\psi$. Choose  $b_1,\ldots,b_m\in N$  such that $(N,\cdot)\models \varphi(a_1,\ldots,a_n,b_1,\ldots,b_m)$. Without loss of generality,  $a_1,\ldots,a_n,b_1,\ldots,b_m$ are pairwise different and $b_1,\ldots,b_m\not \in M$.  Let  $S$  be the graph of  the product $\cdot$ of  $N$  
and let $B=\{a_1,\ldots,a_n,b_1,\ldots,b_m\}$. Then $(B,S^B)$  is a  finite partial STS  and   $A=\{a_1,\ldots,a_n\}$ is relatively closed in $(B,S^B)$,  and we have a corresponding axiom in $\Delta$, which holds in $(M,\cdot)$. Notice that  $(M,\cdot)\models \delta_{(A,S^A)}(a_1,\ldots,a_n)$, so  
\[(M,\cdot)\models \exists y_1\ldots y_m \, \delta_{(B,S^B)}(a_1,\ldots,a_n,y_1,\ldots,y_m)\, ,\] 
and we may choose $b_1^\prime,\ldots,b_m^\prime\in M$  such that $(M,\cdot)\models \delta_{(B,S^B)}(a_1,\ldots,a_n,b_1^\prime,\ldots, b_m^\prime)$. If  $\varphi$  contains an equality of the form  $x_i\cdot y_j= y_k$, then  $a_i\cdot b_j =b_k$, and then  $S(a_i,b_j,b_k)$  and the  equation $x_i\cdot y_j= y_k$  belongs to $\delta_{(B,S^B)}$. Similarly for other kinds of equalities in $\varphi$.  Hence $(M,\cdot)\models \varphi(a_1,\ldots,a_n,b_1^\prime,\ldots,b_m^\prime)$.
\end{proof}

\begin{prop}\label{mc4}  The class  $K^\ast_\mathrm{Sq}$  of existentially closed Steiner quasigroups is elementary. An axiomatization of its theory $T^\ast_\mathrm{Sq}$ is obtained by adding to $ \Delta$ the three axioms of the theory $T_{\mathrm{Sq}}$ of all Steiner quasigroups.   $T^\ast_\mathrm{Sq}$ is a complete theory with elimination of quantifiers,  and it is the model completion of the theory  $T_{\mathrm{Sq}}$ of all Steiner quasigroups. 
\end{prop}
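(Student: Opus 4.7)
The plan is to deduce everything from Proposition~\ref{mc3} together with the amalgamation and joint embedding properties in Lemma~\ref{amalclass}. By Proposition~\ref{mc3}, a Steiner quasigroup is existentially closed in the class of all Steiner quasigroups if and only if it models $\Delta$; since Steiner quasigroups are by definition models of $T_{\mathrm{Sq}}$, this already shows that $K^\ast_\mathrm{Sq}$ is an elementary class axiomatized by $T_{\mathrm{Sq}}\cup\Delta$. A standard chain construction, using that $T_{\mathrm{Sq}}$ is universal so that unions of chains of Steiner quasigroups remain Steiner quasigroups, then shows that every model of $T_{\mathrm{Sq}}$ embeds in a model of $T^\ast_\mathrm{Sq}$.

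For quantifier elimination I would invoke the usual amalgamation criterion: it suffices to check that whenever $M,N\models T^\ast_\mathrm{Sq}$ share a common substructure $A$, every existential formula over $A$ that is satisfied in $M$ is also satisfied in $N$. Since $T_{\mathrm{Sq}}$ is universal, any such $A$ is itself a Steiner quasigroup, so Lemma~\ref{amalclass} allows us to amalgamate $M$ and $N$ over $A$ inside some Steiner quasigroup $K$, which can then be extended to a model $K^\ast\models T^\ast_\mathrm{Sq}$ by the previous paragraph. Both $M$ and $N$ sit inside $K^\ast$ as Steiner quasigroups, and because they are existentially closed in the class of all Steiner quasigroups, they are in particular existentially closed in $K^\ast$; so an existential $A$-formula true in $M$ is true in $K^\ast$, and hence in $N$.

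Completeness and the model completion assertion follow cheaply from this. For completeness, take two models of $T^\ast_\mathrm{Sq}$, jointly embed them into a common Steiner quasigroup via the JEP part of Lemma~\ref{amalclass}, extend to some $K^\ast\models T^\ast_\mathrm{Sq}$, and use that quantifier elimination forces both models to be elementary substructures of $K^\ast$. For the model completion part, $T^\ast_\mathrm{Sq}$ is model complete by quantifier elimination, extends $T_{\mathrm{Sq}}$, and every model of $T_{\mathrm{Sq}}$ embeds into one of $T^\ast_\mathrm{Sq}$, so it is a model companion; combined with the amalgamation property of $T_{\mathrm{Sq}}$ (again Lemma~\ref{amalclass}), this identifies $T^\ast_\mathrm{Sq}$ as the model completion. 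The only step that requires any vigilance is the amalgamation step of the quantifier elimination argument, where one must notice that the common substructure $A$ is automatically a Steiner quasigroup precisely because the axioms of $T_{\mathrm{Sq}}$ are universal, which is what makes Lemma~\ref{amalclass} applicable.
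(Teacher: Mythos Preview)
Your argument is correct and uses exactly the same ingredients as the paper: Proposition~\ref{mc3} for the axiomatization, universality of $T_{\mathrm{Sq}}$ for the existence of existentially closed extensions, and the JEP/AP of Lemma~\ref{amalclass} for completeness and quantifier elimination. The only difference is cosmetic: the paper first observes that $T^\ast_\mathrm{Sq}$ is the model companion and then cites Chang--Keisler (Propositions~3.5.11, 3.5.18, 3.5.19) to get completeness from JEP and QE/model completion from AP, whereas you unpack these citations and reverse the order, proving QE directly via the amalgamation criterion and reading off completeness and the model completion property afterwards.
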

\begin{proof} The first claim is Proposition~\ref{mc3}. Since the axioms of $T_{\mathrm{Sq}}$ are universal, every Steiner quasigroup can be extended to an existentially closed Steiner quasigroup. Hence $T^\ast_\mathrm{Sq}$ is the model companion of $T_{\mathrm{Sq}}$. Since $T_{\mathrm{Sq}}$ has the JEP, 
$K^\ast_\mathrm{Sq}$ has  JEP too  and, by model-completeness, $T^\ast_\mathrm{Sq}$ is a complete theory.  Since $T_{\mathrm{Sq}}$ has AP,  $T^\ast_\mathrm{Sq}$ is the model completion of $T_{\mathrm{Sq}}$  and has quantifier elimination.  See~\cite{Cha-Kei90}, Propositions~3.5.11,~3.5.18 and~3.5.19, for details.
\end{proof}

Since $T^\ast_\mathrm{Sq}$  is a complete theory, it has a monster model.  As usual, the models of $T^\ast_\mathrm{Sq}$ will be identified with small elementary submodels of the monster.
\begin{nota} In the rest of the paper, the monster model of $T^\ast_\mathrm{Sq}$ will be denoted by $(\c_\mathrm{Sq},\cdot)$, and  $\P$ will denote the graph of the product in $(\c_\mathrm{Sq},\cdot)$.
\end{nota}

\begin{remark}\label{mc5} \begin{enumerate}
\item[(i)] Every partial STS of cardinality at most $|\c_\mathrm{Sq}|$ can be embedded in~$(\c_\mathrm{Sq},\P)$.
\item[(ii)] Let $A\subseteq \c_\mathrm{Sq}$ be the universe of a small substructure, and let $R=\P^A$ be the graph of the product on $A$.  If  $(B,S)$ is a partial STS such that $|B| \leq |\c_\mathrm{Sq}|$ and $(A,R)\subseteq (B,S)$,  then there is an embedding of $(B,S)$ into  $(\c_\mathrm{Sq},\P)$ over $A$.
\end{enumerate}
\end{remark}

 \section{Smallness} \label{smallness}
 We show how to construct a finitely generated countable Steiner quasigroup $M$ that embeds every member of a given family of finite Steiner quasigroups. We do this in such a way that the only finite quasigroups that embed in $M$ are the members of the family and their substructures. This construction is used to 
 show that $T^\ast_\mathrm{Sq}$ is not a small theory.

\begin{prop}\label{sma1} Let $\{(A_i,\cdot)\mid i<\omega\}$ be a family of finite Steiner quasigroups such that $|A_i| \geq 3$ for at least one $i \in \omega$.  Then there is a countable infinite quasigroup $(M,\cdot)$ such that:
\begin{enumerate}
\item $M$ is generated by three elements;
\item every $(A_i,\cdot)$ embeds in $(M,\cdot)$;
\item if a finite quasigroup embeds in $(M,\cdot)$, it embeds in some $(A_i,\cdot)$.
\end{enumerate}
\end{prop}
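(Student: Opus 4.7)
The plan is to build $(M,\cdot)$ as the free completion (via Fact~\ref{F1}.1) of a carefully designed countable partial Steiner triple system $(P,R)$. After re-indexing so that $|A_0|\geq 3$, I would take the universe of $P$ to be $\{a,b,c\}\cup\bigsqcup_i A_i'\cup Z$, where $A_i'\cong A_i$ is a disjoint copy and $Z$ is a set of auxiliary elements. The defined products of $P$ are (i) the STS structure within each $A_i'$, and (ii) for each element $x$ of some $A_i'$, a \emph{term chain} of defined products expressing $x$ as a term $t_x(a,b,c)$, using fresh auxiliaries from $Z$ at each internal node. I would build the chains inductively so that they share only the generators $\{a,b,c\}$ and a small fixed bottom layer $z_{ab}=a\cdot b$, $z_{ac}=a\cdot c$, $z_{bc}=b\cdot c$; above that layer, every auxiliary is private to one chain. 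Consistency then gives a valid partial STS $(P,R)$, and I set $M:=F(P)$.

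Property (1), that $\{a,b,c\}$ generates $M$, follows because every element of $P$ is by construction a term in $\{a,b,c\}$, and every element of $M\setminus P$ arises as the product of two earlier elements, hence is inductively a term in $\{a,b,c\}$. Property (2), that each $A_i$ embeds in $M$, is immediate from $A_i'\subseteq P\subseteq M$.

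For property (3), let $B\subseteq M$ be a finite subquasigroup, and define a level $\ell$ on $M$ by $\ell(x)=0$ if $x\in P$ and $\ell(x)=1+\max(\ell(y),\ell(z))$ if $x$ was added at that stage of the free completion as $y\cdot z$. Let $k=\max_{x\in B}\ell(x)$. If $k\geq 1$, pick $x\in B$ of level $k$; then $x=y\cdot z$ for a unique pair $y,z$ with $\max(\ell(y),\ell(z))=k-1$. For any $w\in B\setminus\{x,y,z\}$, the product $x\cdot w$ is a fresh element of level $k+1$, contradicting maximality; closure of $B$ then forces $B=\{x\}$ or $B=\{x,y,z\}$, both of which embed in $A_0$ (the 3-element block because $|A_0|\geq 3$). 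If $k=0$, then $B\subseteq P$ is closed under the partial operation of $P$, and the structural design of $P$---no defined products between distinct $A_i'$'s, tree-like chains with fresh internal auxiliaries, and the fixed bottom layer---forces $B$ to sit inside a single $A_i'$ (hence embedding into $A_i$), inside a single block of a chain or of the bottom layer (a 3-element block, embedding into $A_0$), or to be a singleton.

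The main obstacle is the bookkeeping in constructing the term chains: I need simultaneously that the assigned products in $R$ are globally consistent (so $(P,R)$ really is a partial STS), and that the combinatorial structure of $P$ rules out finite closed subsets mixing elements from different chains, different $A_i'$'s, or multiple generators. A \emph{tree-like} chain structure with fresh internal auxiliaries above the fixed bottom layer $\{z_{ab},z_{ac},z_{bc}\}$ delivers both requirements.
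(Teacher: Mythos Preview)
Your overall strategy---build $M$ as the free completion of a designed partial STS---is right in spirit and matches the paper's, and your level argument for the $k\geq 1$ case is sound. The gap is in the construction of $P$ itself.

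You claim that the term chains can be made pairwise disjoint above the fixed six-element bottom layer $\{a,b,c,z_{ab},z_{ac},z_{bc}\}$. This is impossible. Any term tree with leaves in the bottom layer has at least one internal node both of whose children are leaves (take an internal node of maximal depth); in your setup that node defines the product of two bottom-layer elements. But only six unordered pairs from the bottom layer are without a forced product, so already after seven chains the pigeonhole principle gives two chains assigning \emph{different} fresh auxiliaries to the same bottom-layer product, and $(P,R)$ is not a partial STS. Once the chains are forced to share above the bottom layer, your $k=0$ analysis---which leans entirely on ``tree-like, private above the bottom''---no longer applies, and you have supplied no replacement argument for why a finite closed $B\subseteq P$ cannot straddle two $A_i'$'s or mix chain auxiliaries with an $A_i'$.

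The paper sidesteps this by \emph{interleaving} free extension with attachment. Starting from three independent points $B_0$, it freely adds products stage by stage until some finite $B_i^{(n)}$ contains $2k{+}3$ elements $b_1,\ldots,b_{2k+3}$ with no products defined among them (where $A_i$ has $k$ generators $a_1,\ldots,a_k$); it then glues on a disjoint copy of $A_i$ by declaring the triples $\{b_j,b_{k+j},a_j\}$ to be blocks, keeping $b_{2k+1},b_{2k+2},b_{2k+3}$ independent for the next round. This manufactures an unbounded supply of independent hooks as the construction proceeds, which your one-shot $P$ cannot do over a finite bottom layer. The payoff is that in $B_{i+1}$ each generator $a_j$ has exactly one defined product with an element outside $A_i$; the paper's case analysis exploits this to show that any finite STS of size $>3$ inside $B_{i+1}$ lies entirely within $A_i$ or within $B_i$ (hence, inductively, inside some earlier $A_j$). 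That inductive mechanism is the missing idea in your proposal.
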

\begin{proof} $M$ is the result of a free construction which is carried out inductively over $\{ A_i\mid i<\omega\}$.  We may assume that the quasigroups  $A_i$  are pairwise disjoint.  Let  $R_i$ be the graph of the product on $A_i$, so $(A_i,R_i)$ is a finite STS. We construct an ascending chain   $\{(B_i,S_i)\mid i<\omega\}$  of finite partial  STS  $(B_i,S_i)$  such that   
\begin{enumerate}
\item $B_0$ has three elements (not in a block);
\item every two  $a,b\in B_i$  have a defined product in $(B_{i+1},S_{i+1})$;
\item  $(A_i,R_i)$ is a substructure of $(B_{i+1},S_{i+1})$;
\item every $a\in B_{i+1}$ can be written as a product of elements of $B_{i}$;
\item if a finite STS $(A,R)$ embeds in $(B_{i+1},S_{i+1})$, then it embeds in some $(A_j,R_j)$  with  $j\leq i$.
\end{enumerate}
Then we take  $M=\bigcup_{i<\omega}B_i$  and  $S=\bigcup_{i<\omega}S_i$. It follows from 2 that $(M,S)$ is an STS. If $(M,\cdot)$ is the corresponding Steiner quasigroup, then $M$ is generated by the three elements of $B_0$ and it has the required properties.

Let $(B_0,S_0)$ be such that $|B_0|=3$ and $S_0 = \{(b,b,b) : b \in B\}$ (so $B_0$ is the partial STS with three elements that do not form a block). Now assume that $(B_i,S_i)$  has been constructed and that it contains three elements with no product defined among them. Assume $(A_i,R_i)$ is generated by  $a_1,\ldots,a_k$.  
We extend $(B_i, S_i)$ to a partial STS $(B^{(1)}_i,S^{(1)}_i)$ by adding a product $ a \cdot b$ for each pair $\{a,b\}$  of elements of $B_i$ whose product is not defined in $(B_i,S_i)$, in such a way that different pairs have different products. 
We iterate this procedure until we obtain a partial STS $(B^{(n)}_i, S^{(n)}_i)$, where $n$ depends on $i$, that contains a subset of size  $2k+3$, say $\{b_1,\ldots,b_{2k+3}\}$,  with no product defined among its elements.  We may assume that $A_i\cap B^{(n)}_i=\emptyset$. Define $(B_{i+1},S_{i+1})$  as the common extension of $(A_i,R_i)$ and $(B^{(n)}_i,S^{(n)}_i)$   with universe $B_{i+1}=B^{(n)}_i\cup A_i$  and where $S_{i+1}$ is obtained by adding to  $R_i\cup S^{(n)}_i$  the products corresponding to all triples  of the form  $\{b_i,b_{k+i},a_i\}$  for $i=1,\ldots k$, as well as all the necessary triples of the form $(a,a,a)$. Note that no product is defined among $b_{2k+1},b_{2k+2},b_{2k+3}$  and that every element of $A_i$ is now obtained as an iterated product of elements of $B_i$.

If $(A,R)$ is a finite STS of cardinality $\leq 3$,  then $(A,R)$ embeds in any  $(A_i,R_i)$ such that $|A_i|\geq 3$.   Let $(A,R)$  be a finite STS  which is a substructure of  $(B_{i+1},S_{i+1})$, assume that $|A|>3$ and suppose that $A$ is not contained in any $A_j$  with  $j < i$.  We may assume inductively that  $A\not\subseteq B_i$.  Suppose for a contradiction that $A \nsubseteq A_i$. If  $A\cap A_i\neq \emptyset$, take $a\in A\cap A_i$  and  $b\in A\smallsetminus A_i$ (so $b \in B^{(n)}_i$). There is a defined product $a\cdot b$ in $(A,R)$. Notice that $a\cdot b\in A\smallsetminus A_i$ (so $a \cdot b \in B^{(n)}_i$). Then 
$b, a\cdot b\in B_{i+1}$ have a product  $a= b\cdot (a\cdot b)\in A_i$.  By construction, there is a list $a_1,\ldots,a_k$  of generators of $A_i$  and a list  $b_1,\ldots,b_{2k+3}$  of elements of $B_i^{(n)}$  without defined product in $B_i^{(n)}$   and there is some $j\leq k$  such that  $a=a_j$,  $b=b_j$   and  $a\cdot b= b_{k+j}$.   Since there is a unique element of $A_i$ whose product with $b=b_j$ is defined in $B_{i+1}$, it follows that  $A\cap A_i=\{a\}$.  Since $|A|>3$, there is some $c\in A$ different from $a,b$ and $a\cdot b$, hence with a defined product with $a$. Since in $B_{i+1}$ the only defined products of $a$  with elements not in $A_i$ are the products with $b$ and with $a\cdot b$, it follows that $c \in A_i$, contradicting 
$A \cap A_i = \{a \}$.

If $A\cap A_i =\emptyset$, then $A \subseteq B^{(n)}_i$ and there is some $a\in A$  such that  $a \in B^{(n)}_i\smallsetminus B_i$.  The elements of $B_i^{(n)} \smallsetminus B_i$ have been obtained  iteratively as products of previous pairs in a free way. We may assume that $a \in B^{(n)}_i \setminus B^{(n-1)}_i$, that is, no element of $A$ has been obtained after obtaining $a$.   Since $|A|>3$, there are different  $b,c\in A\smallsetminus \{a\}$   with   $c\neq a\cdot b$.  By our choice of $a$,  the elements $b$ and $a\cdot b$ are in $B^{(n-1)}_i$, and $a$ is the product of $b$ and $a\cdot b$, a pair without a defined product in $B^{(n-1)}_i$. But, similarly, $a$ is the product of $c$ and $a\cdot c$, elements without a defined product in $B^{(n-1)}_i$.  In this case, the pairs  $\{b,a\cdot b\}$ and  $\{c,a\cdot c\}$ coincide. But this is not possible, since $c\neq b$  and  $c\neq a\cdot b$.
\end{proof}

The following result by Doyen gives a countable family of finite Steiner quasigroups none of which embeds in another member of the family. Applying the construction of Proposition~\ref{sma1} to this family gives uncountably many complete 3-types over $\emptyset$.
\begin{lemma}[Doyen] \label{doyen} For all $ n \equiv 1, 3 \pmod{6}$ there is an STS of cardinality $n$ that does not embed any STS of cardinality $m$  for $3<m<n$. 
\end{lemma}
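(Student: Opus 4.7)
The plan is to exhibit, for every admissible $n \geq 7$, a concrete STS of order $n$ whose only subsystems are trivial (i.e.\ of cardinality at most~$3$). This is the classical theorem of Doyen, and the construction naturally splits according to the residue of $n$ modulo~$6$. The smallest admissible orders $n=7$ and $n=9$ (the Fano plane and $\mathrm{AG}(2,3)$) trivially satisfy the conclusion, since they contain no proper subsystem of order~$>3$ at all, so I may restrict attention to $n \geq 13$.

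For $n \equiv 3 \pmod 6$, write $n = 3k$ and apply Bose's construction to a commutative idempotent quasigroup $(Q,*)$ of order $k$: the points are $Q \times \{0,1,2\}$, with ``vertical'' blocks $\{(a,0),(a,1),(a,2)\}$ and ``transversal'' blocks of the form $\{(a,i),(b,i),(a * b,\, i+1)\}$ (indices mod $3$) for $a \neq b$. A routine verification shows that any subsystem of the resulting STS of order strictly greater than~$3$ projects onto a sub-quasigroup of $(Q,*)$ of size at least~$2$. For $n \equiv 1 \pmod 6$, write $n = 6k+1$ and use Skolem's analogous construction on a point set of the form $(\ZZ/(2k)\ZZ \times \{0,1,2\}) \cup \{\infty\}$, built from a half-idempotent commutative quasigroup of order $2k$; once again, nontrivial subsystems correspond to nontrivial sub-quasigroups.

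It therefore suffices to exhibit, for each admissible order, a commutative (half-)idempotent quasigroup with no proper sub-quasigroup of size greater than~$1$. This is the principal obstacle, and it is handled in Doyen's original proof by explicit multiplication tables on cyclic groups $\ZZ/m\ZZ$, with parameters (modulus and offsets) chosen coprime in a way that precludes any non-trivial $*$-closed proper subset. A short finite list of exceptional small orders is then dispatched by direct inspection. Since the lemma enters the present paper only as a source of an infinite antichain of finite Steiner quasigroups --- combined with Proposition~\ref{sma1} to produce uncountably many complete $3$-types over $\emptyset$ --- I would in practice simply cite Doyen's construction and refer the reader to the original argument for the verification of subsystem-freeness.
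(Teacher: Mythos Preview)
Your proposal is correct and in practice coincides with the paper's approach: the paper's proof consists solely of the citation \cite{Doyen69}, deferring entirely to Doyen's original construction, which is exactly what you conclude you would do. Your sketch of the Bose/Skolem constructions from commutative (half-)idempotent quasigroups without proper sub-quasigroups is a faithful summary of what lies behind that citation, but the paper itself supplies none of this detail.
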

\begin{proof} \cite{Doyen69}. \end{proof}

\begin{theorem} \label{notsmall} $T^\ast_\mathrm{Sq}$ is not small. In fact, there are $2^\omega$ complete types over $\emptyset$ in three variables.
\end{theorem}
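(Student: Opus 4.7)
The plan is to combine Doyen's Lemma~\ref{doyen} with Proposition~\ref{sma1} to produce $2^\omega$ pairwise non-isomorphic $3$-generated Steiner quasigroups, and then use quantifier elimination (Proposition~\ref{mc4}) to conclude that the $3$-types of their generating triples over $\emptyset$ are pairwise distinct. Via Remark~\ref{mc5}, each such quasigroup embeds into $\c_\mathrm{Sq}$, so all these types will be realised in the monster.

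Concretely, using Lemma~\ref{doyen} I would fix a sequence $\{A_n : n < \omega\}$ of finite Steiner triple systems with strictly increasing cardinalities, all greater than $3$, such that no $A_n$ embeds any STS of size strictly between $3$ and $|A_n|$. An immediate consequence is that $A_n$ embeds into $A_m$ if and only if $n=m$. For each infinite $X \subseteq \omega$ I would enumerate $X$ and apply Proposition~\ref{sma1} to the subfamily $\{A_n : n \in X\}$ to obtain a countable Steiner quasigroup $M_X$ generated by three elements $c_1^X, c_2^X, c_3^X$ and satisfying clauses~2 and~3 of that proposition.

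The main step, and the only point requiring real attention, is to show that $X$ can be recovered from the isomorphism type of $M_X$. I will verify the equivalence that $n \in X$ if and only if $A_n$ embeds into $M_X$: the forward direction is clause~2 of Proposition~\ref{sma1}, while for the converse, if $A_n$ embeds into $M_X$ then by clause~3 it embeds into some $A_m$ with $m \in X$, and since $|A_n| > 3$, Doyen's property forces $|A_n| = |A_m|$, hence $n = m$ by the choice of strictly increasing cardinalities. Thus distinct infinite $X, Y \subseteq \omega$ yield non-isomorphic $M_X$ and $M_Y$.

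To finish, I embed each $M_X$ into $\c_\mathrm{Sq}$ by Remark~\ref{mc5}(i) and set $p_X = \tp(c_1^X, c_2^X, c_3^X)$. By quantifier elimination $p_X$ is determined by the quantifier-free diagram of the quasigroup generated by $c_1^X, c_2^X, c_3^X$, which is all of $M_X$. Hence $p_X = p_Y$ implies $M_X \cong M_Y$ and therefore $X = Y$. Since there are $2^\omega$ infinite subsets of $\omega$, this yields $2^\omega$ complete $3$-types over $\emptyset$, and $T^\ast_\mathrm{Sq}$ is not small.
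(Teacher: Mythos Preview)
Your argument is correct and follows essentially the same route as the paper's proof: both use Doyen's lemma to produce a mutually non-embeddable family of finite STSs, apply Proposition~\ref{sma1} to each infinite subfamily to get $3$-generated quasigroups distinguished by which finite systems they embed, and then invoke quantifier elimination to separate the resulting $3$-types. Your write-up is slightly more explicit than the paper's in justifying the recovery of $X$ from $M_X$ and the passage from isomorphism types to types via quantifier elimination, but there is no substantive difference in strategy.
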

\begin{proof} For $n \equiv 1, 3 \pmod{6}$, let $(A_n,R_n)$ be the STS of cardinality $n$ given by Lemma~\ref{doyen}, so $A_n$ does not embed any STS of cardinality $m$  for  $3<m<n$.  Let $(A_n,\cdot)$ be the corresponding Steiner quasigroup.  Let  $I$ be the set of all natural numbers $n$ such that  $n\equiv \,1,3 \pmod{6}$. For every infinite subset $X\subseteq I$,  let $(M_X,\cdot)$  be  the countable Steiner quasigroup obtained from the family $\{(A_n,\cdot)\mid n\in X\}$  as in Proposition~\ref{sma1}. Then $M_X$ is generated by three elements and the only non-trivial finite Steiner quasigroups embeddable in $(M_X,\cdot)$  are the quasigroups $(A_n,\cdot)$  with  $n\in X$. 
Clearly, if $X\neq Y$, then $(M_X,\cdot)$ and $(M_Y, \cdot)$ are not isomorphic. Since $(M_X,\cdot)$ embeds in the monster model $(\c^\ast_\mathrm{Sq},\cdot)$ of $T^\ast_\mathrm{Sq}$, we may assume that $M_X\subseteq \c^\ast_\mathrm{Sq}$.  Choose three generators  $a,b,c\in \c^\ast_\mathrm{Sq}$ of $M_X$  and let $p_X(x,y,x)=\tp(a,b,c)$.  Then $p_X(x,y,z)\neq p_Y(x,y,z)$  if $X\neq Y$.  This gives $2^\omega$ 3-types over $\emptyset$.
\end{proof}

\section{The Fra\"{\i}ss\'e limit} \label{fraisse}

The existence of the Fra\"{\i}ss\'e limit of all finite Steiner quasigroups is well known: the limit is the countably
infinite homogeneous locally finite Steiner quasigroup \cite{PJCinfinitels, thomas}.

\begin{fact}\label{f1} The class $\mathrm{K_{\mathrm{Sq}}^{\mathrm{fin}}}$ of all finite Steiner quasigroups has a Fra\"{\i}ss\'e limit  $(M_\mathrm{F},\cdot)$,  the unique (up to isomorphism) countable ultrahomogeneous Steiner quasigroup  whose age is $K_{\mathrm{Sq}}^{\mathrm{fin}}$. Moreover,  $(M_\mathrm{F},\cdot)$ is locally finite.
\end{fact}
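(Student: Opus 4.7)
The plan is to verify that the class $K_{\mathrm{Sq}}^{\mathrm{fin}}$ of finite Steiner quasigroups satisfies the hypotheses of Fra\"iss\'e's theorem, apply the theorem, and then establish local finiteness separately from the construction of the limit.

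First I would check the four standard conditions on $K_{\mathrm{Sq}}^{\mathrm{fin}}$. The hereditary property is immediate because the axioms of $T_{\mathrm{Sq}}$ in Definition~\ref{defquasigroup} are universal in the language $L=\{\cdot\}$: any substructure (in the model theoretic sense) of a finite Steiner quasigroup is closed under the operation and inherits those three identities, so it is again a finite Steiner quasigroup. Countability of $K_{\mathrm{Sq}}^{\mathrm{fin}}$ up to isomorphism is clear since $L$ is finite and each member has finite universe. The joint embedding and amalgamation properties have already been established in Lemma~\ref{amalclass} (the finite case of that lemma is exactly what is needed here).

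With the hypotheses in place I would invoke Fra\"iss\'e's theorem directly to obtain a countable $L$-structure $(M_\mathrm{F},\cdot)$ that is ultrahomogeneous and has age equal to $K_{\mathrm{Sq}}^{\mathrm{fin}}$, and to conclude that $(M_\mathrm{F},\cdot)$ is unique with these properties up to isomorphism. Since the age consists of Steiner quasigroups and the axioms are universal, $(M_\mathrm{F},\cdot)$ itself is a Steiner quasigroup.

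Finally I would verify local finiteness by using the standard presentation of the Fra\"iss\'e limit as the union of an increasing chain $A_0\subseteq A_1\subseteq \cdots$ of members of $K_{\mathrm{Sq}}^{\mathrm{fin}}$. Given any finite set $X\subseteq M_\mathrm{F}$, one has $X\subseteq A_n$ for some $n$; since each $A_n$ is already closed under the quasigroup operation as a substructure of $M_\mathrm{F}$, the subquasigroup $\langle X\rangle$ generated by $X$ inside $M_\mathrm{F}$ is contained in $A_n$, hence is finite. I expect no real obstacle here: the whole statement reduces to a routine verification of Fra\"iss\'e's hypotheses (already done in Lemma~\ref{amalclass}), plus this last chain-of-finite-structures observation; the only mild point to keep straight is that we are working in the functional language, which is precisely what makes HP and the closure argument for local finiteness automatic.
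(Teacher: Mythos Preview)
Your proposal is correct and follows essentially the same route as the paper's proof: verify HP and countability of isomorphism types, invoke AP and JEP for finite Steiner quasigroups (the paper cites Fact~\ref{F1} directly, you cite Lemma~\ref{amalclass}, which is the more precise reference), apply Fra\"{\i}ss\'e's theorem, and deduce local finiteness from the presentation of $M_\mathrm{F}$ as a union of a chain of finite substructures. Your write-up is simply a more detailed unpacking of the paper's terse argument.
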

\begin{proof} By Fact~\ref{F1}, the class $K_{\mathrm{Sq}}^{\mathrm{fin}}$ has the amalgamation property and the joint embedding property. It is clear that $K_{\mathrm{Sq}}^{\mathrm{fin}}$ has the hereditary property and that it contains only countably many isomorphism types. Since $(M_\mathrm{F},\cdot)$ is the union of a countable ascending chain of finite structures, every finitely generated substructure of $(M_\mathrm{F},\cdot)$ is finite.
\end{proof}

The next corollary follows from the properties of the Fra\"{\i}ss\'e limit and from Fact~\ref{F1}.
\begin{cor}\label{f2} Let $P_\mathrm{F}$ be the graph of the product of the Fra\"{\i}ss\'e limit $(M_\mathrm{F},\cdot)$. \begin{enumerate}  
\item Every finite partial STS  can be embedded in $(M_\mathrm{F},P_\mathrm{F})$.
\item Assume that $A\subseteq M_\mathrm{F}$ is the universe of a finite substructure and $R=P_\mathrm{F}^A$ is the graph of the product on $A$.  If  $(B,S)$ is a finite partial STS  such that  $(A,R)\subseteq (B,S)$,  then there is an embedding of $(B,S)$ into  $(M_\mathrm{F},P_\mathrm{F})$ over $A$.
\end{enumerate}
\end{cor}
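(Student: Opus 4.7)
The plan is to combine Fact~\ref{F1} (every finite partial STS embeds in a finite STS) with the standard Fra\"{\i}ss\'e-theoretic properties of $(M_\mathrm{F},\cdot)$ from Fact~\ref{f1}: its age is precisely $K_\mathrm{Sq}^\mathrm{fin}$, and its ultrahomogeneity yields the extension property, namely that any embedding of a finite substructure of $M_\mathrm{F}$ into $M_\mathrm{F}$ extends to an embedding of any finite superstructure lying in the age.

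For part~1, let $(B,S)$ be a finite partial STS. By Fact~\ref{F1}.2 we may embed it into a finite STS $(C,T)$; the associated Steiner quasigroup $(C,\cdot)$ lies in $K_\mathrm{Sq}^\mathrm{fin}$, the age of $M_\mathrm{F}$, and therefore embeds into $(M_\mathrm{F},\cdot)$. Composing the two embeddings gives the required embedding of $(B,S)$ into $(M_\mathrm{F},P_\mathrm{F})$.

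For part~2, first observe that because $A$ is the universe of a finite substructure of $M_\mathrm{F}$, the product is total on $A$ and $R=P_\mathrm{F}^A$ is the graph of the induced quasigroup operation, so $(A,\cdot)$ is itself a finite Steiner quasigroup. Given $(B,S)\supseteq (A,R)$, use Fact~\ref{F1}.2 to find a finite STS $(C,T)\supseteq (B,S)$. Since every pair in $A$ already has its product defined in $A$ and $(A,R)\subseteq (C,T)$ as partial STSs, the product computed in $C$ agrees with the product in $A$ on pairs from $A$; hence $(A,\cdot)$ is a subquasigroup of $(C,\cdot)$. Applying the extension property of the Fra\"{\i}ss\'e limit to the inclusion $(A,\cdot)\hookrightarrow (M_\mathrm{F},\cdot)$, we obtain an embedding $f:(C,\cdot)\hookrightarrow (M_\mathrm{F},\cdot)$ that is the identity on $A$, and the restriction of $f$ to $B$ is the required embedding of $(B,S)$ into $(M_\mathrm{F},P_\mathrm{F})$ over $A$.

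The only point requiring any care is the bookkeeping check that $(A,\cdot)$ sits inside $(C,\cdot)$ as a \emph{subquasigroup} rather than merely as a partial-STS substructure; this is automatic from the totality of the product on $A$ together with the containment $(A,R)\subseteq (C,T)$. No genuine obstacle arises: the corollary is essentially a direct unwrapping of Fact~\ref{F1} together with the defining properties of the Fra\"{\i}ss\'e limit recorded in Fact~\ref{f1}.
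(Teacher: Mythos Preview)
Your proposal is correct and follows exactly the approach the paper indicates: the paper merely states that the corollary follows from the properties of the Fra\"{\i}ss\'e limit together with Fact~\ref{F1}, and your argument spells this out faithfully. The bookkeeping point you flag about $(A,\cdot)$ being a genuine subquasigroup of $(C,\cdot)$ is handled correctly.
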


The next two propositions show that the Fra\"{\i}ss\'e limit $M_\mathrm{F}$ is existentially closed, and it is a prime model of $T^\ast_\mathrm{Sq}$.
\begin{prop}  The Fra\"{\i}ss\'e limit $(M_\mathrm{F},\cdot)$  is a model of $T^\ast_\mathrm{Sq}$, the model completion of the theory $T_{\mathrm{Sq}}$ of all Steiner quasigroups. 
\end{prop}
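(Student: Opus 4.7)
The plan is to verify that $(M_\mathrm{F},\cdot)$ satisfies every sentence in $\Delta$. Since $M_\mathrm{F}$ is a Steiner quasigroup, it satisfies the three axioms of $T_{\mathrm{Sq}}$, so by Propositions~\ref{mc3} and~\ref{mc4} this will establish that $(M_\mathrm{F},\cdot)\models T^\ast_\mathrm{Sq}$.

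Fix a sentence in $\Delta$ coming from a pair $((B,S),A)$, where $(B,S)$ is a finite partial STS and $A$ is a relatively closed subset of $B$, and fix a tuple $\bar a=(a_1,\ldots,a_n)\in M_\mathrm{F}$ with $M_\mathrm{F}\models\delta_{(A,S^A)}(\bar a)$. I need to produce $\bar b\in M_\mathrm{F}$ realising $\delta_{(B,S)}(\bar a,\bar b)$. First, let $A^\star$ be the subquasigroup of $M_\mathrm{F}$ generated by $\bar a$; by local finiteness (Fact~\ref{f1}) it is finite, so $(A^\star,P_\mathrm{F}^{A^\star})$ is a finite STS. After renaming the elements of $B\setminus A$ to be disjoint from $A^\star$, identify each element of $A$ with the corresponding $a_i\in A^\star$, so that $A=B\cap A^\star$.

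The hypothesis $M_\mathrm{F}\models\delta_{(A,S^A)}(\bar a)$ says exactly that every product prescribed by $(A,S^A)$ coincides with the product in $A^\star$; in particular $(A,S^A)$ is compatible with $(A^\star,P_\mathrm{F}^{A^\star})$ on $A$. Combined with the standing assumption that $A$ is relatively closed in $(B,S)$, Lemma~\ref{mc1}(2) yields that $(B\cup A^\star,\,S\cup P_\mathrm{F}^{A^\star})$ is a finite partial STS, and by construction it extends $(A^\star,P_\mathrm{F}^{A^\star})$. Applying Corollary~\ref{f2}(2) to this extension produces an embedding into $(M_\mathrm{F},P_\mathrm{F})$ that fixes $A^\star$ pointwise. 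Taking $\bar b$ to be the images of $b_1,\ldots,b_m$ under that embedding, the positive diagram $\delta_{(B,S)}(\bar a,\bar b)$ is preserved, which is exactly what is needed.

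The delicate point, and the step where I expect the main obstacle, is the amalgamation of $(B,S)$ with $A^\star$ over $A$: in $A^\star$ the tuple $\bar a$ may have additional products (landing in $A^\star\setminus A$) that are undefined in the partial STS $(A,S^A)$, and it is not obvious a priori that this extra structure is consistent with $(B,S)$. The resolution is that compatibility as defined in the paper only requires agreement where \emph{both} partial structures define a product, and the axiom in $\Delta$ demands only that the \emph{positive} diagram of $(B,S)$ be realised, so we never need the products among the $a_i$'s in $M_\mathrm{F}$ to remain undefined; the relative closedness of $A$ in $(B,S)$ then guarantees that no conflicts arise between $A^\star$ and the newly added points $b_1,\ldots,b_m$.
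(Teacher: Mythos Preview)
Your proof is correct and follows essentially the same route as the paper's: both arguments pass to the finite subquasigroup generated by $\bar a$ (the paper calls it $C$, you call it $A^\star$), amalgamate it with a copy of $(B,S)$ over the identified copy of $A$ using Lemma~\ref{mc1}, and then invoke Corollary~\ref{f2}(2) to embed the resulting finite partial STS into $M_\mathrm{F}$ over that subquasigroup. The only cosmetic difference is that the paper keeps the abstract data $(A,B)$ separate from its realisation $(A',B')$ via explicit isomorphisms, whereas you identify $A$ with $\{a_1,\ldots,a_n\}$ directly; and the paper chooses the new points outside $M_\mathrm{F}$ rather than merely outside $A^\star$, but as you implicitly note, Corollary~\ref{f2}(2) does not require this.
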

\begin{proof} We check that $(M_\mathrm{F},\cdot)$  satisfies the axioms in $\Delta$.  Recall that $P_\mathrm{F}$ is the graph of the product of $M_\mathrm{F}$. Let  $(B,S)$ be a finite partial STS  and  $A\subseteq B$  a relatively closed subset. Let $n=|A|$,  $n+m=|B|$,  $A=\{a_1,\ldots,a_n\}$, $B=\{a_1,\ldots,a_n,b_1,\ldots,b_m\}$ and $R=S^A$. Assume $(M_\mathrm{F},\cdot)\models \delta_{(A,R)}(a_1^\prime,\ldots,a_n^\prime)$.  Define $R^\prime$ on $A^\prime=\{a_1^\prime,\ldots,a_n^\prime\}$ in  such a way that  $a_i\mapsto a_i^\prime$  is an isomorphim of partial STSs. Choose $b_1^\prime,\ldots,b_m^\prime\not\in M_\mathrm{F}$ and define $S^\prime$ on $B^\prime=\{a_1^\prime,\ldots,a_n^\prime,b_1^\prime,\ldots,b_m^\prime\}$ in such a way that the mapping defined by $a_i\mapsto a_i^\prime$ and $b_i\mapsto b_i^\prime$  is an isomorphism of partial STSs. Then $R^\prime= {S^\prime}^{A^\prime}$ and $A^\prime$ is relatively closed in $(B^\prime, S^\prime)$. Let $C \subseteq M_\mathrm{F}$ be the universe of the finite substructure generated by $A^\prime$ in $(M_\mathrm{F},\cdot)$ and let $P$ be the graph of the product on $C$, so $P= P_\mathrm{F}^C$. Then $(C,P)$ is a finite STS, $A^\prime= C\cap B^\prime$  and  $(C,P)$  is compatible with  $(B^\prime,S^\prime)$ on $A^\prime$. Therefore $(C\cup B^\prime, P\cup S^\prime)$ is a partial STS. Moreover, $R^\prime\subseteq  P$ and, by Lemma~\ref{mc1},   $(C,P)\subseteq (C\cup B^\prime, P\cup S^\prime)$.  By Lemma~\ref{f2}, there is an embedding \[f:(C\cup B^\prime,P\cup S^\prime)\rightarrow (M_\mathrm{F},P_\mathrm{F})\]
 over $C$.  Then  
 \[(M_\mathrm{F},\cdot)\models \delta_{(B,S)}(a_1^\prime,\ldots,a_n^\prime,f(b_1^\prime),\ldots,f(b_m^\prime))\, .\]
\end{proof}

\vskip 12pt
\begin{prop} The Fra\"{\i}ss\'e limit $(M_\mathrm{F},\cdot)$ is a prime model of $T^\ast_\mathrm{Sq}$.
\end{prop}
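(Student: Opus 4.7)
My plan is to combine three ingredients already in hand: quantifier elimination for $T^\ast_\mathrm{Sq}$ (Proposition~\ref{mc4}), the local finiteness of the Fra\"{\i}ss\'e limit (Fact~\ref{f1}), and the extension axioms $\Delta$ that characterise existentially closed Steiner quasigroups (Proposition~\ref{mc3}). By quantifier elimination every $L$-embedding between models of $T^\ast_\mathrm{Sq}$ is automatically elementary, so to show primeness it is enough to produce, for an arbitrary $N\models T^\ast_\mathrm{Sq}$, some $L$-embedding $f:M_\mathrm{F}\hookrightarrow N$.

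Since $M_\mathrm{F}$ is countable and locally finite, write $M_\mathrm{F}=\bigcup_{i<\omega} A_i$ as an ascending union of finite Steiner subquasigroups, and build a chain of $L$-embeddings $f_i:A_i\hookrightarrow N$ with $f_{i+1}\!\restriction\!A_i=f_i$ by induction on~$i$; setting $f=\bigcup_i f_i$ finishes the job. For the base case $A_0$, apply the $\Delta$-axiom associated to the pair $((A_0,S_0),\emptyset)$, which reduces to $\exists\bar y\,\delta_{(A_0,S_0)}(\bar y)$ and therefore supplies a tuple in $N$ realising the positive diagram of $A_0$. For the inductive step, note that because $A_i$ is a subquasigroup of $A_{i+1}$, the product of any two elements of $A_i$ lies in $A_i$; equivalently, $A_i$ is relatively closed in $A_{i+1}$ regarded as a partial STS. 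Letting $S_{i+1}$ be the graph of $\cdot$ on $A_{i+1}$, the $\Delta$-axiom associated to $((A_{i+1},S_{i+1}),A_i)$, applied to $f_i(A_i)\subseteq N$ (which satisfies $\delta_{(A_i,S_{i+1}^{A_i})}$ because $f_i$ is an embedding), produces witnesses $b_1',\ldots,b_m'\in N$ such that sending each new element of $A_{i+1}\setminus A_i$ to the corresponding $b_j'$ extends $f_i$ to $f_{i+1}:A_{i+1}\to N$ satisfying the positive diagram and the distinctness conjuncts of $\delta_{(A_{i+1},S_{i+1})}$.

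The only subtle point to flag is that the axioms in $\Delta$ are phrased in terms of partial STSs, whereas we need $f_{i+1}$ to be a Steiner \emph{quasigroup} embedding; but since $A_{i+1}$ is a \emph{total} quasigroup, every equation $a\cdot b=c$ appears in its positive diagram, so preservation of that diagram automatically yields $f_{i+1}(a\cdot b)=f_{i+1}(a)\cdot f_{i+1}(b)$ for all $a,b\in A_{i+1}$, and the distinctness clauses in $\delta_{(A_{i+1},S_{i+1})}$ supply injectivity. Thus there is no real obstacle beyond bookkeeping: all the genuine amalgamation has been absorbed into $\Delta$ and the role of local finiteness is simply to let us perform the extension step countably many times.
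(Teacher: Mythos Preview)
Your argument is correct but takes a different route from the paper's. The paper establishes primeness by showing that $M_\mathrm{F}$ is \emph{atomic}: for any tuple $a_1,\ldots,a_n\in M_\mathrm{F}$, local finiteness gives a finite generated substructure $\{a_1,\ldots,a_{n+m}\}$, and the formula $\exists x_{n+1}\ldots x_{n+m}\,\varphi(x_1,\ldots,x_{n+m})$ (where $\varphi$ is the diagram of that finite quasigroup) isolates $\tp(a_1,\ldots,a_n)$ by quantifier elimination. You instead construct an elementary embedding of $M_\mathrm{F}$ into an arbitrary $N\models T^\ast_\mathrm{Sq}$ directly, by a forward-only chain argument: local finiteness supplies the exhaustion by finite subquasigroups, the $\Delta$-axioms do the extension work at each stage, and quantifier elimination upgrades the resulting $L$-embedding to an elementary one. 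The paper's approach yields slightly more information (it exhibits explicit isolating formulas, hence atomicity), while yours is arguably more transparent as a proof of primeness per se and makes the role of the axiomatisation $\Delta$ more visible. Both arguments use the same ingredients---local finiteness, quantifier elimination, and the fact that finite quasigroups carry their full diagram in $\delta$---but package them differently.
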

\begin{proof} Let $a_1,\ldots,a_n\in M_\mathrm{F}$  and let us prove that $\tp(a_1,\ldots,a_n)$ is isolated.  We may assume than the $a_i$ are pairwise distinct. The substructure of $(M_\mathrm{F},\cdot)$ generated by $a_1,\ldots,a_n$ is finite, say of cardinality $n+m$, and we fix an enumeration  $a_1,\ldots,a_{n+m}$ of it. 

Let  $\varphi(x_1,\ldots, x_{n+m})$   be the conjunction of $\bigwedge_{1\leq i<j\leq  n+m}x_i\neq x_j$  with all the equalities of the form  $x_i\cdot x_j = x_k$  such that $a_i\cdot a_j= a_j$. 
We claim that the formula 
\[\exists x_{n+1}\ldots x_{n+m}\, \varphi(x_1,\ldots,x_{n+m})\]
  isolates $\tp(a_1,\ldots,a_n)$.  Clearly,  the tuple $a_1,\ldots,a_n$  satisfies this formula.  Consider the monster model $(\c_\mathrm{Sq},\cdot)$, an elementary extension of $(M_\mathrm{F},\cdot)$,  and let $b_1,\ldots,b_n$ in $\c_\mathrm{Sq}$ be a tuple such that  
\[(\c_\mathrm{Sq},\cdot)\models \exists x_{n+1}\ldots x_{n+m}\, \varphi(b_1,\ldots,b_n,x_{n+1}\ldots,x_{n+m})\, .\]
 Take $b_{n+1},\ldots,b_{n+m}\in \c_\mathrm{Sq}$  such that $(\c_\mathrm{Sq},\cdot)\models \varphi(b_1,\ldots,b_{n+m})$.  Then  $\{b_1,\ldots,b_{n+m}\}$  is the universe of a substructure of $(\c_\mathrm{Sq},\cdot)$  and the mapping defined by  $a_i\mapsto b_i$  is an isomorphism. By elimination of quantifiers,  $\tp(a_1,\ldots,a_n)=\tp(b_1,\ldots,b_n)$.
\end{proof}

\begin{remark}  By Theorem~\ref{notsmall}, there is no countable saturated model of $T^\ast_\mathrm{Sq}$, and so in particular the Fra\"{\i}ss\'e limit $(M_\mathrm{F},\cdot)$  is not saturated. This can also be seen directly: for example, $M_\mathrm{F}$ does not realise the type of a finitely generated infinite Steiner quasigroup.  
\end{remark}

\begin{remark}  By Theorem~\ref{notsmall}, there are $2^{\omega}$ non-isomorphic countable Steiner quasigroups generated by three elements. Therefore there are uncountably many isomorphism types of finitely generated Steiner quasigroups, and so the class of finitely generated Steiner quasigroups does not have a Fra\"{\i}ss\'e limit.
\end{remark}

\section{Algebraic closure and elimination of $\exists^\infty$}
In this section, all sets and tuples are chosen in $\c_\mathrm{Sq}$, the monster model of $T^\ast_\mathrm{Sq}$.

\begin{defi} \label{rank} Let $t(x_1,\ldots,x_n)$ be a term in the language of Steiner quasigroups  $L=\{\cdot\}$. The \textbf{rank} of $t$ is $m+1$, where $m$   is the number of occurrences of $\cdot$ in $t$. 
\end{defi}
It follows from Definition~\ref{rank} that the terms of rank $1$ are the variables.  Moreover, the rank of $t_1\cdot t_2$ equals the sum of the  ranks of $t_1$ and $t_2$.

\begin{defi}\label{ac0}  Let $A$  be a subset of $\c_\mathrm{Sq}$.  The universe of the substructure of $\c_\mathrm{Sq}$ generated by $A$ will be denoted by  $\langle A\rangle $.  The set $\langle A\rangle_k $  is the subset of $\langle A\rangle$ consisting of the elements that can be written as $t(a_1,\ldots,a_n)$, where $t(x_1,\ldots,x_n)$ is a term of rank $\leq k$ with $a_1,\ldots,a_n\in A$.  Hence,  $\langle A\rangle_1= A$  and  $\langle A\rangle = \bigcup_{k\geq 1} \langle A\rangle_k$.  If $A=\{a_1,\ldots,a_n\}$, we sometimes use the notation $\langle A\rangle =\langle a_1,\ldots,a_n\rangle$  and $\langle A\rangle_k = \langle a_1,\ldots,a_n\rangle_k$.
\end{defi}

\begin{lemma}\label{ac1}  Let $\P$ be the graph of the product in $\c_\mathrm{Sq}$, let $A= \langle  a_1,\ldots,a_n\rangle_m$  and let $B= \langle b_1,\ldots,b_n\rangle_m$.  If the mapping $a_i\mapsto b_i$ extends to an isomorphism between  the partial STSs $(A,\P^A)$  and $(B, \P^B)$  and $\varphi(x_1,\ldots,x_n)$  is a quantifier-free formula all of whose terms have rank $\leq m$, then  $(\c_\mathrm{Sq},\cdot)\models \varphi(a_1,\ldots,a_n)$  if and only if  $(\c_\mathrm{Sq},\cdot)\models \varphi(b_1,\ldots,b_n)$.
\end{lemma}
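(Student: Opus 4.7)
The plan is to reduce the statement to a single claim about term evaluation: if $f \colon (A,\P^A) \to (B,\P^B)$ is the isomorphism of partial STSs extending $a_i \mapsto b_i$, then for every term $t(x_1,\ldots,x_n)$ of rank $\leq m$ we have
\[
f(t(a_1,\ldots,a_n)) = t(b_1,\ldots,b_n).
\]
Once this claim is in hand, atomic formulas $t_1 = t_2$ (with both terms of rank $\leq m$) are preserved because $f$ is a bijection, negations of atomic formulas follow, and arbitrary quantifier-free $\varphi$ is handled by induction on the boolean structure.

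I would establish the displayed claim by induction on the rank $k$ of $t$. The base case $k=1$ just says $f(a_i)=b_i$, which is part of the hypothesis. For the inductive step, write $t = t_1 \cdot t_2$ with $\mathrm{rank}(t_1) = k_1$ and $\mathrm{rank}(t_2) = k_2$, where $k_1 + k_2 = k \leq m$. In particular $k_1, k_2 \leq m$, so $t_1(\bar a), t_2(\bar a) \in \langle \bar a\rangle_{m} = A$, and likewise $t(\bar a) = t_1(\bar a)\cdot t_2(\bar a) \in \langle \bar a\rangle_k \subseteq A$. Thus the product $t_1(\bar a)\cdot t_2(\bar a)$ is actually a \emph{defined} product inside the partial STS $(A,\P^A)$, and similarly for $\bar b$ in $(B,\P^B)$. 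Since $f$ is an isomorphism of partial STSs, it sends this defined product to the product of the images, so
\[
f(t(\bar a)) = f(t_1(\bar a)) \cdot f(t_2(\bar a)) = t_1(\bar b) \cdot t_2(\bar b) = t(\bar b),
\]
using the inductive hypothesis on $t_1$ and $t_2$.

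The potential obstacle I want to flag is the requirement that every intermediate product appearing in the evaluation of $t$ on $\bar a$ lies in $A$: this is exactly what allows us to invoke the partial-STS isomorphism $f$, which only knows about products that are \emph{defined} in $A$. The hypothesis $A = \langle \bar a\rangle_m$ (and not merely $A \supseteq \{a_1,\ldots,a_n\}$) together with the rank-$\leq m$ bound on $t$ and the additivity of rank under $\cdot$ ensures that every subterm of $t$ evaluates into $A$, so the induction never steps outside the domain of $f$. Everything else (closure under negation, conjunction and disjunction, and the passage from term-preservation to preservation of atomic equalities and inequalities) is entirely formal.
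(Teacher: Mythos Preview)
Your proposal is correct and follows exactly the same approach as the paper: the paper's proof simply asserts that for every term $t$ of rank $\leq m$ the element $t(a_1,\ldots,a_n)$ lies in $A$ and is sent to $t(b_1,\ldots,b_n)$ by the isomorphism, and then concludes that the two tuples satisfy the same equalities between terms of rank $\leq m$. You have unpacked this assertion via the natural induction on rank, correctly identifying that the key point is that all intermediate subterm values lie in $A$ (so the partial-STS isomorphism applies at each step); the remaining reduction to arbitrary quantifier-free $\varphi$ is, as you say, formal.
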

\begin{proof}  For every term $t(x_1,\ldots,x_n)$ of rank $\leq m$, the element  $t(a_1,\ldots,a_n)$ is in $A$ and it is sent to $t(b_1,\ldots,b_n)\in B$ by the isomorphism that extends $a_i\mapsto b_i$. Therefore $a_1,\ldots ,a_n$  and $b_1,\ldots ,b_n$  satisfy the same equalities between terms of rank $\leq m$. 
\end{proof}

Let $\varphi(y,a_1, \ldots a_n)$ be a quantifier-free formula that describes how an element $y$ is related to a finite partial STS $A=\{a_1, \ldots, a_n\} \subseteq \c_\mathrm{Sq}$. We show that there is a number $k$, which depends on the rank of the terms in $\varphi$, such that whenever $\varphi(y, a_1, \ldots, a_n)$ is satisfied by an element that is algebraic over $(a_1, \ldots, a_n)$ but cannot be written as a term $t(a_1, \ldots, a_n)$ of rank at most $k$, there are arbitrarily many realizations. The idea is that a finite partial STS only determines the behaviour of iterated products of its elements up to a certain rank.
\begin{prop}\label{ac2} Let $\varphi=\varphi(x,x_1,\ldots,x_n)$  be a quantifier-free formula in the language $L=\{\cdot\}$,  and let $m$ be an upper bound for the rank of the terms occurring in $\varphi$.  Let  $\psi_i(x,x_1,\ldots,x_n)$ be the conjunction of all the inequalities of the form $x\neq t(x_1,\ldots,x_n)$  for terms $t$ of rank $\leq i$.  There is  a number $k$, depending only on $n$ and $m$, such that for every $r \in \omega$ the following sentence holds in $\c_\mathrm{Sq}$:
 $$\forall x_1\ldots x_n \,(\exists x \, (\psi_k(x,x_1,\ldots,x_n)\wedge \varphi(x,x_1,\ldots,x_n))\rightarrow \exists^{\geq r} x\,  \varphi(x, x_1,\ldots,x_n))\, .$$
\end{prop}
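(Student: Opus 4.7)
The strategy is, given a witness $b$ of $\psi_k(x,\bar a)\wedge\varphi(x,\bar a)$ in $\c_\mathrm{Sq}$, to construct $r$ pairwise distinct realizations of $\varphi(x,\bar a)$ by amalgamating $r-1$ pairwise disjoint copies of the finite partial STS $B:=\langle\{b\}\cup\bar a\rangle_m$ (equipped with the restricted product graph $\P^B$) over $B^0:=B\cap\langle\bar a\rangle$, and embedding the resulting partial STS into $\c_\mathrm{Sq}$ via Remark~\ref{mc5}(ii). The integer $k$ must be chosen as an explicit function of $n$ and $m$ that is large enough to guarantee $b\notin\langle\bar a\rangle$, since otherwise the copies of $b$ would collapse to $b$ itself and fail to produce new elements.

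\textbf{Construction and conclusion.} Since products of elements of $\langle\bar a\rangle$ remain in $\langle\bar a\rangle$, one checks that $B^0$ is relatively closed in $B$. For any $r\geq 2$, take pairwise disjoint isomorphic copies $B_2,\ldots,B_r$ of $B$ over $B^0$ via isomorphisms $\sigma_i:B\to B_i$ fixing $B^0$ pointwise, and set $b_i:=\sigma_i(b)$. Let $A^{++}:=\langle\{b\}\cup\bar a\rangle\subseteq\c_\mathrm{Sq}$, a small substructure containing $B$, so that $A^{++}\cap B_i = B^0$ for each $i\geq 2$. By Lemma~\ref{mc1}(2), applied with $B^0$ relatively closed in each $B_i$, each pairwise union $A^{++}\cup B_i$ is a partial STS; by Lemma~\ref{mc2}(2), each union $B_i\cup B_j$ ($i\neq j$) is a partial STS; hence by Lemma~\ref{mc2}(1), the full union $D := A^{++}\cup B_2\cup\cdots\cup B_r$ (with the combined product graph) is a partial STS. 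Remark~\ref{mc5}(ii) then provides an embedding $f:D\to\c_\mathrm{Sq}$ fixing $A^{++}$ pointwise, and the elements $b,f(b_2),\ldots,f(b_r)$ are pairwise distinct (as $b\in A^{++}$ but $f(b_i)\notin A^{++}$ for $i\geq 2$, by injectivity of $f$ combined with $b_i\notin A^{++}$). For each $i\geq 2$, the composite $f\circ\sigma_i$ is a partial-STS isomorphism between $\langle\{b\}\cup\bar a\rangle_m = B$ and $\langle\{f(b_i)\}\cup\bar a\rangle_m = f(B_i)$ (evaluated in $\c_\mathrm{Sq}$) fixing $\bar a$, so by Lemma~\ref{ac1} the truth of $\varphi(b,\bar a)$ transfers to $\varphi(f(b_i),\bar a)$.

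\textbf{Main obstacle.} The technical heart is the choice of $k$. For the construction to yield new elements one needs $b\notin B^0$, that is, $b\notin\langle\bar a\rangle$, whereas the hypothesis $\psi_k(b,\bar a)$ only gives $b\notin\langle\bar a\rangle_k$. The required combinatorial lemma is therefore: there exists $k=k(n,m)$ such that any element of $\langle\bar a\rangle$ satisfying a quantifier-free formula in $n$ parameters with term-rank at most $m$ lies in $\langle\bar a\rangle_k$. One proves this by analysing how each atomic equation $t(x,\bar a)=s(x,\bar a)$ with $t,s$ of rank $\leq m$ constrains $x$ in a Steiner quasigroup: iterated use of the axioms $u\cdot u = u$ and $u\cdot(u\cdot v)=v$ shows that every non-trivially satisfied such equation admits solutions expressible as terms in $\bar a$ of rank bounded by a function of $n$ and $m$. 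Combining these bounds over the finitely many atomic subformulas of $\varphi$ yields $k$; beyond this threshold, $\psi_k(b,\bar a)\wedge\varphi(b,\bar a)$ forces $b\notin\langle\bar a\rangle$, and the amalgamation goes through.
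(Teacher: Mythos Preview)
Your amalgamation strategy is the right shape, but the ``combinatorial lemma'' you invoke to pass from $b\notin\langle\bar a\rangle_k$ to $b\notin\langle\bar a\rangle$ is false, and this is a genuine gap. Take $\varphi(x,a_1,a_2)\equiv(x=x)$, or more interestingly $\varphi(x,a_1,a_2)\equiv(x\cdot a_1\neq a_2)$: in $\c_\mathrm{Sq}$ the substructure $\langle a_1,a_2\rangle$ may well be infinite, and both formulas are satisfied by elements of $\langle a_1,a_2\rangle$ of arbitrarily high rank. Your sketch (``every non-trivially satisfied such equation admits solutions expressible as terms of bounded rank'') only says something about atomic equations that genuinely constrain $x$; it says nothing about inequations or about disjuncts that leave $x$ unconstrained. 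So for a general quantifier-free $\varphi$ there is no $k=k(n,m)$ with the property you want, and with your choice $B^0=B\cap\langle\bar a\rangle$ the copies of $b$ can collapse.

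The paper's proof circumvents this by amalgamating over a \emph{smaller} set than $B\cap\langle\bar a\rangle$. Writing $D=\langle b,\bar a\rangle_m$, it builds $X$ by starting from $\langle\bar a\rangle_m$ and iteratively closing under those products that land in $D$. Since $|D|$ is bounded by the number of terms in $n+1$ variables of rank $\le m$, the process stabilises after at most $k_0$ steps (where $k_0$ exceeds this bound), and each step at most doubles the rank, so $X\subseteq\langle\bar a\rangle_k$ with $k=2^{k_0}m$. This $X$ is relatively closed in $D$ by construction, and now $b\notin X$ follows \emph{directly} from $\psi_k(b,\bar a)$, with no need to know whether $b\in\langle\bar a\rangle$. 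One then amalgamates copies of $D$ over $X$ (and glues the whole picture to $\langle\bar a\rangle$, using $X\subseteq\langle\bar a\rangle$), obtaining $r$ distinct realizations via Lemma~\ref{ac1} exactly as you intended. The key idea you are missing is that the base of amalgamation need not be all of $D\cap\langle\bar a\rangle$; any relatively closed subset containing $\langle\bar a\rangle_m$ and avoiding $b$ suffices, and such a set of bounded rank always exists.
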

\begin{proof} Let $k_0$ be larger than the number of terms $t(x,x_1,\ldots,x_n)$  of rank  $\leq m$, and let $k= 2^{k_0}\cdot m$.  Let  $a,a_1,\ldots,a_n\in \c_\mathrm{Sq}$ and assume $\c_\mathrm{Sq}\models \psi_k(a,a_1,\ldots,a_n)\wedge \varphi(a,a_1,\ldots,a_n)$.  Note that $k_0>|\langle a,a_1,\ldots,a_n\rangle_m|$.  First we claim that there is a set $X$  such that  
$$\langle a_1,\ldots,a_n\rangle_m\subseteq X\subseteq \langle a,a_1,\ldots,a_n\rangle_m\cap \langle a_1,\ldots,a_n\rangle_{2^{k_0}\cdot m}$$
and $X$ is relatively closed in  $\langle a,a_1,\ldots,a_n\rangle_m$  (that is, if  $b,c\in X$  and  $b\cdot c\in \langle a,a_1,\ldots,a_n\rangle_m$, then $b\cdot c\in X$). In order to obtain $X$, we build a chain 
$$\langle a_1,\ldots,a_n\rangle_m =X_0\subseteq X_1\subseteq \cdots \subseteq X_i=X\subseteq \langle a,a_1,\ldots,a_n\rangle_m \cap \langle a_1,\ldots,a_n\rangle_{2^{k_0}\cdot m}$$ 
where $X_j\subseteq \langle a_1,\ldots,a_n\rangle_{2^j\cdot m}$ for all $j \leq i$.  The idea is as follows: if no product of elements of $\langle a_1,\ldots,a_n \rangle_m$ belongs to $\langle a,a_1,\ldots,a_n\rangle_m\smallsetminus \langle a_1,\ldots,a_n\rangle_m$ we take $X=\langle a_1,\ldots,a_n\rangle_m$. Otherwise we form $X_1$ by adding to $X_0$ all such products (which are in $\langle a_1,\ldots,a_n\rangle_{2\cdot m}$). We ask again if any products of elements of $X_1$ belong to $\langle a,a_1,\ldots,a_n\rangle_m\smallsetminus X_1$ and we continue in this way. Formally,  
\[X_{j+1} = X_j \cup \{b \cdot c  \mid  b,c \in X_j \mbox{ and } b \cdot c \in \langle a,a_1,\ldots,a_n\rangle_m \smallsetminus X_j \}\, . \]
Let $D=\langle a,a_1,\ldots,a_n\rangle_m$. Since $|D|< k_0$ and $X_j\subseteq D$, there is $i \leq k_0$ such that   $X_i=X_{i+1}$,  and we can take $X=X_i$. By our choice of $k$,  we have $a\not\in X$.  Choose pairwise  disjoint sets $B_1,\ldots,B_r$, each  disjoint from $D $ and of the same cardinality as $D\smallsetminus X$,  and  choose bijections $f_j:D\rightarrow X\cup B_j$ each of which is the identity on $X$.   Define a relation $R_j$ on    each $X\cup B_j$ in such a way that $f_j$ is an isomorphism of partial STS between $(D,\P^D)$  and $(X\cup B_j,R_j)$, where $\P$ is the graph of the product in $\c_\mathrm{Sq}$.  Let $A=\langle a_1,\ldots,a_n\rangle$. Note that $A\cap (X\cup B_j) = X  = (X\cup B_j)\cap (X\cup B_l)$  whenever $j \neq l$.   We claim that 
\[(A\cup B_1\cup\ldots\cup B_r,\P^A\cup R_1\cup \ldots \cup R_r)\]
  is a partial STS that contains $(A,\P^A)$ as a substructure.  The last point follows  easily from the first one since  $R_j^X= \P^X$ for every $j$.  By   Lemma~\ref{mc2},  it is enough  to check that $(A\cup B_j, \P^A\cup R_j)$  and $(X\cup B_j\cup B_l, R_j\cup R_l)$ are partial STSs for every $j,l$.   Since $X$ is relatively closed in  every $(X\cup B_i,R_i)$, we have that $(X\cup B_i\cup B_j, R_i\cup R_j)$ is always a partial STS. Since $R_j^X= \P^X$, we have that $(A\cup B_j, \P^A\cup R_j)$ is a partial STS.  By  Remark~\ref{mc5},  there is an embedding 
  \[g: (A\cup B_1\cup\ldots\cup B_r, \P^A \cup R_1\cup\ldots \cup R_r)\rightarrow (\c_\mathrm{Sq},\P)\]
   over  $A$.   Let $b_j =g(f_j(a))$  for $j=1,\ldots,r$.  Then $b_1,\dots,b_r$ are  different elements of $\c_\mathrm{Sq}$  and, by Lemma~\ref{ac1},  each $b_j$ realizes  $\varphi(x,a_1,\ldots,a_n)$ in $(\c_\mathrm{Sq},\cdot)$.
\end{proof}

Proposition~\ref{ac2} and quantifier elimination give a characterization of algebraic closure in $\c_\mathrm{Sq}$.
\begin{cor}\label{ac3} For any set $A\subseteq \c_\mathrm{Sq}$, the algebraic closure of $A$ is  the universe   of the substructure generated by $A$, that is,
$\acl(A)=\langle A\rangle \, .$
\end{cor}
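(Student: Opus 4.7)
\begin{proof}
The inclusion $\langle A\rangle\subseteq \acl(A)$ is immediate: every element of $\langle A\rangle$ is of the form $t(a_1,\ldots,a_n)$ for some term $t$ and $a_1,\ldots,a_n\in A$, and such an element is definable, hence algebraic, over $A$.

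For the converse, since algebraic closure commutes with unions of finite subsets, we may assume $A=\{a_1,\ldots,a_n\}$ is finite. Let $b\in \acl(A)$ and pick an $L(A)$-formula satisfied by $b$ with only finitely many realisations in $\c_\mathrm{Sq}$. By Proposition~\ref{mc4}, $T^\ast_\mathrm{Sq}$ eliminates quantifiers, so we may take this formula to be quantifier-free, say $\varphi(x,a_1,\ldots,a_n)$, and let $m$ be an upper bound on the ranks of the terms appearing in it. Apply Proposition~\ref{ac2} to $\varphi$ and $n,m$ to obtain the corresponding number $k$. Let $r$ be strictly larger than the (finite) number of realisations of $\varphi(x,a_1,\ldots,a_n)$ in $\c_\mathrm{Sq}$. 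Then the consequent $\exists^{\geq r}x\,\varphi(x,a_1,\ldots,a_n)$ fails, so by the sentence in Proposition~\ref{ac2} the antecedent also fails, i.e.\ every realisation of $\varphi(x,a_1,\ldots,a_n)$ fails $\psi_k(x,a_1,\ldots,a_n)$. In particular, $b$ equals $t(a_1,\ldots,a_n)$ for some term $t$ of rank $\leq k$, and therefore $b\in\langle A\rangle$.
\end{proof}

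The plan is exactly the above: the right-to-left inclusion is a one-line observation from the definition of $\langle A\rangle$, and the left-to-right inclusion combines three previously established facts in sequence, namely quantifier elimination (Proposition~\ref{mc4}), the bound on non-term realisations (Proposition~\ref{ac2}), and a finite-support reduction to pass from arbitrary $A$ to a finite parameter tuple. The only delicate point, and the one that is really doing the work, is applying Proposition~\ref{ac2}: one needs to pick $r$ strictly larger than the number of realisations of $\varphi(x,\bar a)$ so as to force the antecedent of its implication to fail, thereby concluding that \emph{every} realisation, and in particular $b$, is a term in $\bar a$ of rank at most $k$. All the combinatorial heavy lifting (building the amalgam that produces arbitrarily many non-term realisations) has been done inside Proposition~\ref{ac2}, so no further constructions are needed here.
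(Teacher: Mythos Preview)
Your proof is correct and follows essentially the same approach as the paper's: both combine quantifier elimination with Proposition~\ref{ac2} to show that any element not in $\langle A\rangle$ satisfies $\psi_k$ for the relevant $k$ and hence cannot be algebraic. The paper's proof is simply the one-line contrapositive of what you have spelled out in detail.
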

\begin{proof} By elimination of quantifiers, Proposition~\ref{ac2} implies that if $a\not\in\langle A\rangle$, then  $a\not\in\acl(A)$.
\end{proof}

\begin{cor}\label{ac4} $T^\ast_\mathrm{Sq}$  eliminates $\exists^\infty$, that is, for each formula $\varphi(x,x_1,\ldots,x_n)$  there is a formula $\psi(x_1,\ldots,x_n)$  defining the set of tuples $(a_1,\ldots,a_n)$ for which 
\[\{b\in \c_\mathrm{Sq}\mid (\c_\mathrm{Sq},\cdot)\models \varphi(b,a_1,\ldots,a_n)\} \]
  is infinite.
\end{cor}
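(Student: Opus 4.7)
The plan is to combine Proposition~\ref{ac2} with quantifier elimination and the simple observation that, in finitely many variables and bounded rank, there are only finitely many terms. First I would reduce to the case of a quantifier-free formula $\varphi(x,x_1,\ldots,x_n)$: by Proposition~\ref{mc4} the theory $T^\ast_\mathrm{Sq}$ has quantifier elimination, so in checking elimination of $\exists^\infty$ it suffices to produce, for each quantifier-free $\varphi$, a formula $\psi(x_1,\ldots,x_n)$ defining the parameters over which $\varphi$ has infinitely many solutions.

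Given $\varphi$, let $m$ be an upper bound for the ranks of terms appearing in $\varphi$, and let $k$ be the number produced by Proposition~\ref{ac2} from $n$ and $m$. I claim that the candidate formula is
\[
\psi(x_1,\ldots,x_n)\ :=\ \exists x\,\bigl(\psi_k(x,x_1,\ldots,x_n)\wedge\varphi(x,x_1,\ldots,x_n)\bigr),
\]
where $\psi_k$ is as in Proposition~\ref{ac2}. For the direction showing $\psi(\bar a)$ forces infinitely many realizations, Proposition~\ref{ac2} gives at least $r$ distinct realizations of $\varphi(x,\bar a)$ for every $r\in\omega$; by saturation of $\c_\mathrm{Sq}$, the realization set is actually infinite.

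For the converse, suppose $\varphi(x,\bar a)$ has infinitely many realizations and yet $\psi(\bar a)$ fails. Then every realization $b$ of $\varphi(x,\bar a)$ is of the form $t(\bar a)$ for some term $t(x_1,\ldots,x_n)$ of rank at most $k$. However, the set of $L$-terms in the variables $x_1,\ldots,x_n$ of rank at most $k$ is finite (a crude bound is obtained by induction on rank), so the set $\langle\bar a\rangle_k$ is finite, contradicting the assumption that $\varphi(x,\bar a)$ has infinitely many solutions. Hence $\psi(\bar a)$ must hold, and $\psi$ defines exactly the parameters over which $\varphi$ has infinitely many realizations.

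There is no real obstacle here; the content is all concentrated in Proposition~\ref{ac2}. The only point that merits a line of justification is the finiteness of the set of terms in a fixed number of variables of bounded rank, which follows from the recursive definition of rank (the rank of $t_1\cdot t_2$ being the sum of the ranks of $t_1$ and $t_2$) together with a straightforward induction.
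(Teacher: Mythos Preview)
Your proof is correct and follows exactly the paper's approach: reduce to quantifier-free $\varphi$ by quantifier elimination, then take $\psi(\bar x)=\exists x\,(\psi_k\wedge\varphi)$ with $k$ from Proposition~\ref{ac2}. The appeal to saturation is unnecessary (having at least $r$ realizations for every $r$ already means infinitely many), but otherwise your argument simply spells out in detail what the paper asserts in a single line.
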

\begin{proof}  By quantifier elimination we may assume that $\varphi$ is quantifier-free.  Choose $k$ and $\psi_k(x,x_1,\ldots,x_n)$  as in Proposition~\ref{ac2} for $\varphi$.  Then $\exists x\, (\varphi(x,x_1,\ldots,x_n)\wedge \psi_k(x,x_1,\ldots,x_n))$  has the required properties.
\end{proof}

\section{Amalgamation and joint consistency lemmas} \label{amal}

The results in this section are about amalgamation of Steiner quasigroups and applications to joint consistency questions of formulas in $T^\ast_\mathrm{Sq}$.   If $\overline{a},\overline{b}$ are (finite or infinite)  tuples of elements of $\c_\mathrm{Sq}$, the notation $\overline{a}\equiv_C \overline{b}$ is standard. Throughout this section,   if $A,B,C\subseteq \c_\mathrm{Sq}$ we use the notation $A\equiv_C B$ to mean that enumerations $\overline{a}$ of $A$ and $\overline{b}$ of $B$ have been fixed, and $\overline{a}\equiv_C\overline{b}$.  By elimination of quantifiers, this is equivalent to the existence of an isomorphism $(\langle A\cup C \rangle, \cdot)\cong (\langle B\cup C\rangle,\cdot)$  which is the identity on $C$ and maps $A$ onto $B$  respecting the enumerations $\overline{a}$ and $\overline{b}$.

For ease of notation, in this section we often use juxtaposition to denote unions of two or more sets.  As usual, $\P$ denotes the graph of the product in $(\c_\mathrm{Sq},\cdot)$.

The next proposition is not used in the rest of the paper, but it is included because the proof gives a flavour of the method used to prove the more complex statement of Proposition~\ref{al2.5} below.

\begin{prop}\label{al1} Let $A_0, A_1,B_0, B_1\subseteq \c_\mathrm{Sq}$  be  closed under product and suppose that  \[A_0\cap B_0= A_1\cap B_1= B_0 \cap B_1 = \emptyset, \mbox{  and   }  A_0 B_0\equiv A_1 B_1\,. \]
  Then there is a Steiner quasigroup $(A,\cdot)\subseteq (\c_\mathrm{Sq},\cdot)$ such that     $A\equiv_{B_0} A_0$ and $A\equiv_{B_1} A_1$.
\end{prop}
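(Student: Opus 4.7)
The plan is to amalgamate abstract copies of the substructures $C_i := \langle A_i B_i\rangle \subseteq \c_\mathrm{Sq}$ (for $i=0,1$) over a common fresh copy of $A_0 \cong A_1$, glue them along $B_0, B_1$ with the actual substructure $\langle B_0 B_1\rangle \subseteq \c_\mathrm{Sq}$, and then invoke Remark~\ref{mc5}(ii) to embed the resulting partial STS back into $\c_\mathrm{Sq}$; the image of the fresh $A$-part will be the required $A$.

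Quantifier elimination upgrades $A_0 B_0 \equiv A_1 B_1$ to an isomorphism $\phi:C_0\to C_1$ restricting to bijections $A_0\to A_1$ and $B_0\to B_1$. Fix a set $A^*$ with a bijection $\sigma: A^* \to A_0$, together with fresh sets $E_i^*$ bijective with $C_i\setminus(A_i\cup B_i)$, taking $A^*, E_0^*, E_1^*$ pairwise disjoint and disjoint from $\c_\mathrm{Sq}$. Set $C_i^* := A^*\cup B_i\cup E_i^*$, and transport the Steiner quasigroup structure of $C_i$ onto $C_i^*$ along the obvious bijection: $\sigma$ (for $i=0$) or $\phi|_{A_0}\circ\sigma$ (for $i=1$) on $A^*$, the identity on $B_i$, and the chosen map on $E_i^*$. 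Now form
\[W := C_0^*\cup C_1^*\cup\langle B_0 B_1\rangle\]
with $R$ the union of the three transported graphs of products. By freshness and the disjointness hypotheses, the pairwise intersections are $C_0^*\cap C_1^* = A^*$ and $C_i^*\cap\langle B_0 B_1\rangle = B_i$. Compatibility on $A^*$ holds because $\phi|_{A_0}$ is itself a Steiner quasigroup isomorphism between the closed subquasigroups $A_0$ and $A_1$; compatibility on $B_i$ is automatic because the transported product on $B_i$ coincides with $\P^{B_i}$, using that $B_i$ is closed. By Lemma~\ref{mc2}, $(W,R)$ is a partial STS containing $\langle B_0 B_1\rangle$ as a substructure.

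Finally, Remark~\ref{mc5}(ii) provides an embedding $g:(W,R)\to(\c_\mathrm{Sq},\P)$ that is the identity on $\langle B_0 B_1\rangle$; put $A:= g(A^*)$. Since $C_0^*$ is (as an abstract Steiner quasigroup) isomorphic to $\langle A_0 B_0\rangle$ via a map fixing $B_0$ pointwise and sending $A_0$ to $A^*$, and $g|_{C_0^*}$ is an embedding of Steiner quasigroups identity on $B_0$ and sending $A^*$ to $A$, the image $g(C_0^*)$ is a substructure of $\c_\mathrm{Sq}$ generated by $A\cup B_0$, hence equal to $\langle A B_0\rangle$. Composing yields an isomorphism $\langle A_0 B_0\rangle\cong\langle A B_0\rangle$ identity on $B_0$ and sending $A_0$ bijectively to $A$, which by the criterion recalled at the start of this section gives $A\equiv_{B_0} A_0$; the symmetric argument with $C_1^*$ yields $A\equiv_{B_1} A_1$. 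The delicate step is the compatibility check on the overlap $A^*$, where both the closure of $A_0, A_1$ under product and the hypothesis $A_0 B_0\equiv A_1 B_1$ (via $\phi|_{A_0}$) are essential; the freshness of $E_0^*, E_1^*$ is precisely what prevents accidental identifications among elements of $C_0\cap C_1\subseteq\c_\mathrm{Sq}$ outside the $A$-parts from forcing clashing products in the amalgam.
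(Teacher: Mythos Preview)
Your proof is correct and follows essentially the same route as the paper's: both build fresh copies $C_0^*, C_1^*$ of $\langle A_0B_0\rangle, \langle A_1B_1\rangle$ glued along a common fresh copy of $A_0\cong A_1$, check compatibility on the overlaps $A^*$ and $B_i$ to obtain a partial STS containing $\langle B_0B_1\rangle$, and then embed over $\langle B_0B_1\rangle$ via Remark~\ref{mc5}. The only cosmetic difference is that the paper first forms $C_0^*\cup C_1^*$ and then unions with $\langle B_0B_1\rangle$, whereas you take all three at once and invoke Lemma~\ref{mc2} directly.
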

\begin{proof} Let $A,U$ and $V$ be  sets such that  
\begin{itemize}
\item $|A|=|A_0|=|A_1|$,  
\item $|U|= |\langle A_0 B_0\rangle \smallsetminus  A_0 B_0|$
\item   $|V|= |\langle  A_1 B_1\rangle \smallsetminus A_1 B_1|$,  
\item $U\cap A= U\cap V= V\cap A=\emptyset$  and  
\item $AUV \cap \langle A_0 A_1 B_0 B_1  \rangle = \emptyset$.   
\end{itemize}
We will define a partial  STS  on $ A U V \langle B_0 B_1\rangle$.  Let $f:\langle A_0 B_0\rangle \rightarrow \langle A_1 B_1 \rangle$  be an isomorphism of Steiner quasigroups that maps  $A_0$ onto $A_1$ and $B_0$ onto $B_1$. Fix a bijection \break $g:\langle A_0 B_0\rangle \rightarrow  A U B_0$  which is the identity on $B_0$ and maps $A_0$ onto  $A$, and a bijection $h:\langle A_1 B_1 \rangle \rightarrow A V B_1  $  which is the identity on $B_1$, and $h\restriction A_1 = g\circ f^{-1}\restriction A_1$.  
Let  $R$ be a relation on $A U B_0$  such that $g$ is an isomorphism between  $(\langle  A_0 B_0\rangle, \P^{ \langle  A_0 B_0\rangle})$  and $(AUB_0,R)$ and let  $S$ be a relation on $AVB_1$  such that $h$ is an isomorphism between  $(\langle A_1 B_1 \rangle, \, \P^{ \langle A_1 B_1 \rangle})$  and $(AVB_1, S)$.  Since  $(A U B_0 )\cap ( A V B_1)=A$  and $R^{ A}= S^{A}$,   it follows that $(A U  V B_0  B_1,\, R\cup S)$ is a partial STS. Since $\langle B_0 B_1\rangle \cap (A U V B_0 B_1)= B_0 B_1$  and  
\[(R\cup S)^{B_0 B_1} = \P^{ B_0}\cup \P^{B_1}= \P^{B_0 B_1}\, ,\]
 we have that 
  $( A U V \langle B_0 B_1\rangle, \P^{\langle B_0 B_1\rangle} \cup R\cup S)$  is a partial STS  that contains $(\langle B_0 B_1\rangle, \P^{ \langle B_0 B_1\rangle}~)$ as a substructure. By Remark~\ref{mc5}, there is an embedding 
  \[j : ( A U V \langle B_0 B_1\rangle, \P^{\langle B_0 B_1\rangle}\cup R\cup S) \rightarrow(\c_\mathrm{Sq}, \P)\]
   over $\langle B_0 B_1\rangle$.  Clearly,  $j(A)$  satisfies all our requirements.
\end{proof}

The next corollary shows the relevance of Proposition~\ref{al1} to joint consistency questions. It uses the notation in Definition~\ref{ac0} as well as the following.
\begin{nota}  For tuples $\overline{a}$ and $\overline{b}$, we write    $\overline{a}\equiv^k\overline{b}$  to mean that  $\overline{a}$ and $\overline{b}$ satisfy the same equalities between terms of rank $\leq k$.
\end{nota}

\begin{cor}\label{al2}  For every   formula  $\varphi(\overline{x},\overline{y})$ of the product language $L=\{\cdot\}$, there is a natural number $k$  such that, for any finite tuples  $\overline{a},\overline{b},\overline{c},\overline{d}$ of elements of $\c_\mathrm{Sq}$, if 
\[ \langle \overline{a}\rangle_k\cap \langle \overline{b}\rangle_k =\emptyset = \langle \overline{c}\rangle_k \cap \langle \overline{a}\rangle_k \ \mbox{ and } \ \overline{c},\overline{a}\equiv^k\overline{d},\overline{b}\ \mbox{ and }\ (\c_\mathrm{Sq},\cdot)\models \varphi(\overline{c},\overline{a})\, ,\] 
then $(\c_\mathrm{Sq},\cdot)\models \exists \overline{x}(\varphi(\overline{x},\overline{a})\wedge \varphi(\overline{x},\overline{b}))$.
\end{cor}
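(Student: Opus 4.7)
By Proposition~\ref{mc4} (quantifier elimination) I may assume $\varphi(\overline x,\overline y)$ is quantifier free; let $m$ be an upper bound for the rank of the terms occurring in $\varphi$. Any $\overline e\in \c_\mathrm{Sq}$ with $\overline e,\overline a\equiv^m \overline c,\overline a$ and $\overline e,\overline b\equiv^m \overline d,\overline b$ automatically satisfies $\varphi(\overline e,\overline a)\wedge \varphi(\overline e,\overline b)$, so the task reduces to producing such an $\overline e$. I would take $k$ sufficiently larger than $m$: the value $k=2m$ makes the initial partial-STS isomorphism work, with any further enlargement dictated by the final compatibility check described in the last paragraph.

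The plan is to run the amalgamation argument of Proposition~\ref{al1} with finite partial STSs in place of closed substructures. Because $\overline c,\overline a\equiv^k \overline d,\overline b$ and the equations $t_1\cdot t_2=t_3$ needed to read off the product structure of $\langle \overline c,\overline a\rangle_m$ involve only terms of rank $\leq 2m\leq k$, the natural assignment $t(\overline c,\overline a)\mapsto t(\overline d,\overline b)$ is a partial-STS isomorphism $f\colon \langle \overline c,\overline a\rangle_m \to \langle \overline d,\overline b\rangle_m$. Choose a fresh tuple $\overline e$ of length $|\overline c|$ together with a fresh set $W$ in bijection, via $f$, with the ``non-generator, non-$\overline a$'' portion of $\langle \overline c,\overline a\rangle_m$. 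Define bijections $g$ onto $\overline e\cup W\cup \langle \overline a\rangle_m$ and $h$ onto $\overline e\cup W\cup \langle \overline b\rangle_m$ that are the identity on $\langle \overline a\rangle_m$ and $\langle \overline b\rangle_m$ respectively, send $\overline c$ and $\overline d$ to $\overline e$, and coincide on $\overline e\cup W$ through $f$. Transporting the partial-STS structures along $g$ and $h$, Lemma~\ref{mc1}(1) yields a partial STS $(T,R_T)$ on $\overline e\cup W\cup \langle \overline a\rangle_m\cup \langle \overline b\rangle_m$, since $f$ being an isomorphism guarantees compatibility on $\overline e\cup W$.

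Next, I would amalgamate $(T,R_T)$ with the closed substructure $(\langle \overline a,\overline b\rangle,\P)$ of $\c_\mathrm{Sq}$ using Lemma~\ref{mc1}(2). The two structures meet in $\langle \overline a\rangle_m \cup \langle \overline b\rangle_m$ because the fresh elements $\overline e,W$ lie outside $\c_\mathrm{Sq}$, and $R_T$ there coincides with $\P$ by construction of $g$ and $h$. The two disjointness hypotheses carry the compatibility check: $\langle \overline a\rangle_k\cap \langle \overline b\rangle_k=\emptyset$ prevents products crossing between the two sides from collapsing into the common part, and $\langle \overline c\rangle_k\cap \langle \overline a\rangle_k=\emptyset$ prevents $\overline e$- or $W$-elements from being forced to equal ambient elements of $\langle \overline a\rangle$. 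By Remark~\ref{mc5}(ii) the resulting partial STS embeds into $(\c_\mathrm{Sq},\P)$ over $\langle \overline a,\overline b\rangle$, and the image of $\overline e$ is the required tuple.

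The technical heart of the argument, and the point at which $k$ must be chosen precisely, is this final compatibility verification: a priori, a rank-$\leq m$ iterated product on the $\overline c$-side could, inside $\c_\mathrm{Sq}$, equal an element of $\langle \overline a\rangle$, forcing an unwanted identification clashing with the fresh elements in $W$. Showing that such coincidences are ruled out once $k$ is sufficiently large relative to $m$ and the tuple lengths---essentially because any such coincidence contracts to a rank-$\leq k$ equation between a $\overline c$-term and an $\overline a$-term, contradicting the hypothesis $\langle \overline c\rangle_k\cap \langle \overline a\rangle_k=\emptyset$---is the main obstacle I expect to spend most of the proof on.
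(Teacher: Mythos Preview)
Your approach is genuinely different from the paper's, and the difference is instructive. The paper does not redo the amalgamation finitary; it simply observes that Proposition~\ref{al1}, applied to closed substructures $A_0=\langle\overline c\rangle$, $A_1=\langle\overline d\rangle$, $B_0=\langle\overline a\rangle$, $B_1=\langle\overline b\rangle$, yields the implication
\[
\Sigma(\overline y,\overline v)\cup\Sigma(\overline x,\overline y)\cup\Gamma(\overline x,\overline y,\overline u,\overline v)\cup\{\varphi(\overline x,\overline y)\}\ \vdash\ \exists\overline x\,(\varphi(\overline x,\overline y)\wedge\varphi(\overline x,\overline v)),
\]
where $\Sigma$ lists all inequalities $t(\overline x)\neq t'(\overline y)$ and $\Gamma$ lists all biconditionals $t(\overline x,\overline y)=t'(\overline x,\overline y)\leftrightarrow t(\overline u,\overline v)=t'(\overline u,\overline v)$. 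Compactness then gives a finite subset that suffices, and $k$ is any bound on the ranks of the terms appearing there. No explicit construction of $k$, no finitary amalgamation, no compatibility check.

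Your direct route runs into a real obstacle, and your proposed resolution of it is not correct. The problematic coincidence is $t(\overline c,\overline a)=s(\overline a)$ with $t$ a \emph{mixed} term of rank $\leq m$ and $s$ of rank possibly up to $2m$ (or more, after iterating): this is exactly what makes $\langle\overline a\rangle_m$ fail to be relatively closed in $\langle\overline c,\overline a\rangle_m$ and breaks the amalgamation with $(\langle\overline a,\overline b\rangle,\P)$. Such an equation does \emph{not} contract to a pure equality between a $\overline c$-term and an $\overline a$-term, so the hypothesis $\langle\overline c\rangle_k\cap\langle\overline a\rangle_k=\emptyset$ does not rule it out, no matter how large $k$ is. The hypotheses of the corollary simply do not control mixed terms. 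What would salvage the direct argument is not to forbid these coincidences but to \emph{absorb} them: replace $\langle\overline a\rangle_m$ by its relative closure $X$ inside $\langle\overline c,\overline a\rangle_m$, exactly as in the proof of Proposition~\ref{ac2}, and let $g$ be the identity on $X$. Since $X\subseteq\langle\overline a\rangle_{2^{k_0}m}$ for $k_0$ bounded by $|\langle\overline c,\overline a\rangle_m|$, one can then choose $k$ so that $X\subseteq\langle\overline a\rangle_k$, and the disjointness hypothesis $\langle\overline a\rangle_k\cap\langle\overline b\rangle_k=\emptyset$ keeps the two sides apart. This is workable but considerably more laborious than the paper's one-line compactness argument.
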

\begin{proof}   Let  $\Sigma(\overline{x},\overline{y})$  be the set of all inequalities  of the form $t(\overline{x})\neq t^\prime(\overline{y})$. Let $\overline{u},\overline{v}$ be tuples of variables having the same length as $\overline{x}$ and $\overline{y}$ respectively.  Let  $\Gamma(\overline{x},\overline{y}, \overline{u},\overline{v})$   be the set of all formulas of the form  
$t(\overline{x},\overline{y})=t^\prime(\overline{x},\overline{y})\leftrightarrow  t(\overline{u},\overline{v})=t^\prime(\overline{u},\overline{v})\,.$

  By Proposition~\ref{al1}, the following implication holds in $T^\ast_\mathrm{Sq}$:
$$\Sigma(\overline{y},\overline{v})\cup \Sigma(\overline{x},\overline{y})\cup \Gamma(\overline{x},\overline{y},\overline{u},\overline{v})\cup\{\varphi(\overline{x},\overline{y})\}\vdash \exists \overline{x}(\varphi(\overline{x},\overline{y})\wedge \varphi(\overline{x},\overline{v}))\, .$$
By compactness one gets finite subsets of $\Sigma$ and $\Gamma$ for which the same implication holds. The number $k$ is an upper bound for the ranks of the terms in these finite subsets.
\end{proof}

\begin{prop}\label{al2.5} Let $A_0,A_1,B_0,B_1\subseteq \c_\mathrm{Sq}$   be  closed under product and such that   \begin{itemize}
\item $A_0\cap B_0= A_1\cap B_1$ 
\item $E=B_0\cap B_1$
\item $A_0 B_0\equiv_E A_1 B_1$
\item $\langle A_0E\rangle \cap B_0 = \langle A_1E\rangle \cap B_1 =E$.
\end{itemize}
 Then there is a Steiner quasigroup $(A,\cdot)\subseteq (\c_\mathrm{Sq},\cdot)$ such that $A\equiv_{B_0} A_0$ and  $A\equiv_{B_1} A_1$.
\end{prop}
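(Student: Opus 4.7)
The plan is to adapt the construction of Proposition~\ref{al1} by carefully matching the new material that the two sides must share once they are forced to agree on the base $E$, and then to embed the resulting partial STS into $\c_\mathrm{Sq}$ via Remark~\ref{mc5}(ii). Let $C = A_0\cap B_0 = A_1\cap B_1$ and fix an isomorphism $f\colon\langle A_0B_0\rangle\to\langle A_1B_1\rangle$ over $E$ sending $A_0\to A_1$ and $B_0\to B_1$. Since $C\subseteq B_0\cap B_1 = E$ and $f$ fixes $E$ pointwise, $f$ also fixes $C$ pointwise. The hypothesis $\langle A_0E\rangle\cap B_0 = E$ (and symmetrically for $B_1$) gives a partition
\[\langle A_0B_0\rangle = A_0 \sqcup (B_0\smallsetminus C) \sqcup W_0 \sqcup U_0,\]
where $W_0 = \langle A_0E\rangle\smallsetminus(A_0\cup E)$ is the new material already determined by $A_0$ and $E$, and $U_0 = \langle A_0B_0\rangle\smallsetminus(\langle A_0E\rangle\cup B_0)$ is the remaining, genuinely new material; the isomorphism $f$ respects this partition and in particular maps $W_0$ bijectively onto $W_1$.

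I would then choose pairwise disjoint fresh sets, all disjoint from $\langle A_0A_1B_0B_1\rangle$: a set $A^*$ in bijection with $A_0\smallsetminus C$, a common fresh set $W$ in bijection with $W_0$ (hence, via $f$, with $W_1$), and separate fresh copies $U_0'$ and $U_1'$ of $U_0$ and $U_1$. Set $A = A^*\cup C$, and choose bijections $g\colon\langle A_0B_0\rangle\to A\cup B_0\cup W\cup U_0'$ and $h\colon\langle A_1B_1\rangle\to A\cup B_1\cup W\cup U_1'$ that are the identity on $B_0$ and $B_1$ respectively, and that satisfy $h\circ f = g$ on $A_0\cup W_0$. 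Define relations $R$ and $S$ on the respective images so that $g$ and $h$ become isomorphisms of partial STSs, and set
\[X = A\cup\langle B_0B_1\rangle\cup W\cup U_0'\cup U_1', \qquad P_X = R\cup S\cup \P^{\langle B_0B_1\rangle}.\]
The aim is to show that $(X,P_X)$ is a partial STS with $(\langle B_0B_1\rangle,\P^{\langle B_0B_1\rangle})$ as a substructure; granting this, Remark~\ref{mc5}(ii) produces an embedding $j\colon X\to\c_\mathrm{Sq}$ over $\langle B_0B_1\rangle$, and $j(A)$ is the desired subquasigroup. Indeed, $j\circ g$ and $j\circ h$ induce isomorphisms $\langle A_0B_0\rangle \cong \langle j(A)B_0\rangle$ over $B_0$ and $\langle A_1B_1\rangle\cong\langle j(A)B_1\rangle$ over $B_1$, so quantifier elimination gives $j(A)\equiv_{B_0}A_0$ and $j(A)\equiv_{B_1}A_1$.

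The main obstacle is verifying that $R$ and $S$ are compatible on their common universe, which I compute to be $A\cup E\cup W$. For a pair $(x,y)$ there, $g^{-1}(x)$ and $g^{-1}(y)$ both lie in $A_0\cup E\cup W_0 \subseteq \langle A_0E\rangle$, so their product in $\langle A_0B_0\rangle$ belongs to $\langle A_0E\rangle$. By the hypothesis $\langle A_0E\rangle\cap B_0 = E$, this product lies in $A_0$, $E$, or $W_0$, and in each case $g$ and $h$ assign to it the same element of $X$: the case $A_0$ uses $h|_{A_1} = g\circ f^{-1}|_{A_1}$; the case $E$ uses that $f$ fixes $E$ pointwise and both $g$ and $h$ are identities on $E$; and the case $W_0$ uses $h|_{W_1} = g\circ f^{-1}|_{W_1}$. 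The hypothesis is precisely what rules out the bad scenario in which the product would fall into $B_0\smallsetminus E$ and produce a triple on the $R$-side with no matching counterpart on the $S$-side. Compatibility of $R$ and $S$ with $\P^{\langle B_0B_1\rangle}$ on the respective overlaps $B_0$ and $B_1$ is immediate from the construction, and closure of $j(A)$ under the product follows from closure of $A_0$ together with the fact that $j\circ g$ preserves the operation.
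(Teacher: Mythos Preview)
Your proposal is correct and follows essentially the same construction as the paper: you build the same abstract partial STS by choosing fresh copies $A^*$, $W$, $U_0'$, $U_1'$ (the paper's $A$, $W$, $U$, $V$), matching the two sides on $A\cup E\cup W$ via the condition $h\circ f = g$ on $\langle A_0E\rangle$, and then embedding into $\c_\mathrm{Sq}$ over $\langle B_0B_1\rangle$ by Remark~\ref{mc5}(ii). Your compatibility check on $A\cup E\cup W$ is exactly the paper's Claims~1--2, spelled out in slightly more detail, and your remark about compatibility with $\P^{\langle B_0B_1\rangle}$ over $B_0$ and $B_1$ is the paper's Claim~3.
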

\begin{proof} 
Let $F=A_0\cap B_0= A_1\cap B_1$ and notice that $F\subseteq E$.  Now choose pairwise disjoint sets  $A,U,V,W$, each of which is also disjoint from $\c_\mathrm{Sq}$, and such that  
\begin{itemize}
\item $|A|=|A_0\smallsetminus F|=|A_1\smallsetminus F|$
\item $|W|= |\langle A_0E\rangle \smallsetminus A_0E| =  |\langle A_1E\rangle \smallsetminus A_1E|$
\item $|U|= |\langle A_0B_0\rangle \smallsetminus (\langle A_0E\rangle B_0)| = |V|= |\langle A_1B_1\rangle \smallsetminus (\langle A_1E\rangle B_1)|\,. $
\end{itemize}
 Let $f:\langle A_0B_0\rangle \rightarrow \langle A_1B_1\rangle$ be an isomorphism that is the identity on $E$, maps $A_0$ onto $A_1$ and maps $B_0$ onto $B_1$. Fix  bijections  $g:\langle A_0B_0\rangle \rightarrow  B_0AWU$   and    $h:\langle A_1 B_1\rangle \rightarrow  B_1AWV$  such that  
 \begin{itemize}
 \item $g$ is the identity on $B_0$ and $h$ is the identity on $B_1$ 
 \item $g(A_0\smallsetminus F)= A$ and $h(A_1\smallsetminus F)= A$
 \item $g(\langle A_0E\rangle\smallsetminus A_0 E)= W$ and $h(\langle A_1E\rangle\smallsetminus A_1 E)= W$  
 \item $g(\langle A_0B_0\rangle\smallsetminus \langle A_0 E\rangle  B_0)= U$ and  $h(\langle A_1B_1\rangle\smallsetminus \langle A_1 E\rangle B_1)= V$.
 \end{itemize}
We additionally require that $h\restriction\langle A_1E\rangle= g\circ f^{-1}\restriction \langle A_1E\rangle$.

Let $R$ be a ternary relation on $AWUB_0$  such that  $g$ is an isomorphism between $(\langle A_0B_0\rangle, \P^{\langle A_0B_0\rangle})$  and  $(AWUB_0, R)$. Similarly, let $S$ be  a ternary relation on $AWVB_1$  such that  $h$ is an isomorphism between $(\langle A_1B_1\rangle, \P^{\langle A_1B_1\rangle})$  and  $(AWVB_1, R)$.  We will show that $(AWUB_0, R)$  and  $(AWVB_1,S)$  are compatible and that  $(\langle B_0B_1\rangle, \P^{\langle B_0B_1\rangle})$  is compatible with both of them.

\noindent
\emph{Claim 1}. $R^{AEW}= S^{AEW}$.

\noindent
\emph{Proof of Claim 1}.   This is due to the fact that  $h\restriction \langle A_1E\rangle= g\circ f^{-1}\restriction \langle A_1E\rangle$.

\noindent
\emph{Claim 2}. $(AWUVB_0B_1, R\cup S)$  is a partial STS.

\noindent
\emph{Proof of Claim 2}.  Note that  $(AWUB_0)\cap(AWVB_1)= AEW$.  Let $a,b\in AEW$  and assume there is some $c\in AWUB_0$  such that  $R(a,b,c)$. We will show that  $S(a,b,c)$. By claim 1, it is enough to prove that $c\in AEW$. This is clear, since $g(\langle A_0E\rangle) =AEW$.

\noindent
\emph{Claim 3}. $(AWU \langle B_0B_1\rangle, \P^{\langle B_0B_1\rangle}\cup R)$   and   $(AWV\langle B_0B_1\rangle , \P^{\langle B_0B_1\rangle}\cup S)$  are partial STSs.

\noindent
\emph{Proof of Claim 3}. The first statement follows from the fact that $( AWU\langle B_0B_1\rangle, \P^{\langle B_0B_1\rangle}\cup R)$ is the union of the two partial STSs $(\langle B_0B_1\rangle , \P^{\langle B_0B_1\rangle})$ and $(AWUB_0,R)$,  whose intersection $B_0= \langle B_0B_1\rangle  \cap (AWUB_0)$ is relatively closed in both systems, and  $R^{B_0}= \P^{B_0}$. The second statement is similar.

By Lemma~\ref{mc2}  and claims 2 and 3,  $( AWUV\langle B_0B_1\rangle, \P^{\langle B_0B_1\rangle}\cup R\cup S)$  is a partial STS, and it clearly contains  $(\langle B_0B_1\rangle, \P^{\langle B_0B_1\rangle})$ as a substructure.  By Remark~\ref{mc5}, there is an embedding $j$ of  $( AWUV\langle B_0B_1\rangle, \P^{\langle B_0B_1\rangle}\cup R\cup S)$ in $\c_\mathrm{Sq}$  over $\langle B_0B_1\rangle$.  Clearly,   
\[j(AF)\cong_{B_0}AF= g(A_0)\cong_{B_0} A_0 \mbox{ and }j(AF)\cong_{B_1}AF= h(A_1)\cong_{B_1} A_1\, .\]
\end{proof}

\begin{prop}\label{al3} Let $A_0,A_1,B_0,B_1,D\subseteq \c_\mathrm{Sq}$   be  closed under product and such that   
\begin{itemize}
\item $A_0\cap B_0= A_1\cap B_1, \ E=B_0\cap B_1$ and     
\item $A_0 B_0\equiv_E A_1 B_1, \ D\equiv_{EA_0} B_0 \mbox{ and }  D\equiv_{EA_1} B_1\, .$
\end{itemize}
   Then there is a Steiner quasigroup $(A,\cdot)\subseteq (\c_\mathrm{Sq},\cdot)$ such that $A\equiv_{B_0} A_0$ and  $A\equiv_{B_1} A_1$.
\end{prop}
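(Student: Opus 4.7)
The plan is to reduce Proposition~\ref{al3} to Proposition~\ref{al2.5} by showing that the hypotheses here actually force the extra condition $\langle A_i E\rangle\cap B_i = E$ ($i=0,1$) assumed in~\ref{al2.5}; the set $D$ exists precisely to supply this compatibility. Fix an $E$-isomorphism $\psi\colon \langle A_0 B_0\rangle\to\langle A_1 B_1\rangle$ with $\psi(A_0)=A_1$ and $\psi(B_0)=B_1$, and, for $i=0,1$, an isomorphism $\sigma_i\colon \langle A_i B_i\rangle\to\langle A_i D\rangle$ fixing $EA_i$ pointwise and sending $B_i$ onto $D$. Immediate set-theoretic consequences are $F:=A_0\cap B_0=A_1\cap B_1\subseteq E$, $E\subseteq D$ (since $\sigma_0$ fixes $E$ and sends $B_0\supseteq E$ into $D$), and $A_i\cap D = F$ (since $\sigma_i^{-1}$ fixes $A_i$ and sends $D$ onto $B_i$).

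The key step is the claim that $\langle A_0 E\rangle\cap B_0 = E$ (the case $i=1$ is symmetric). Let $o\in \langle A_0 E\rangle\cap B_0$; I would show $o\in E$ by induction on the rank of a minimal term $t(\bar a,\bar e)$ expressing $o$, with $\bar a\in A_0$, $\bar e\in E$. Since $\sigma_0$ fixes $\langle A_0 E\rangle$ pointwise, $o=\sigma_0(o)\in\sigma_0(B_0)=D$. In the base rank-$2$ case $o=a\cdot e$, closure of $D$ together with the Steiner axiom gives $a = o\cdot e\in D$, hence $a\in A_0\cap D = F\subseteq E$, and then $o=a\cdot e\in E$ by closure of $E$. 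In the inductive step, write $o=u\cdot v$ with $u,v\in\langle A_0 E\rangle$ of strictly lower rank: if $u\in B_0$ (say), then $u\in\langle A_0 E\rangle\cap B_0$ has lower rank and lies in $E$ by induction; the other factor $v=o\cdot u$ likewise lies in $\langle A_0 E\rangle\cap B_0$ of lower rank, so is in $E$, and closure of $E$ yields $o\in E$. The remaining case, in which both $u$ and $v$ lie outside $B_0$ (hence outside $D$), is ruled out by the combined use of $\sigma_0$, $\sigma_1$, and $\psi$: exploiting that $D$ simultaneously realises $\tp(B_0/EA_0)$ and $\tp(B_1/EA_1)$, transport of the configuration $o=u\cdot v$ through these isomorphisms forces some element of $A_0\setminus F$ into $D$, contradicting $A_0\cap D = F$.

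Once $\langle A_i E\rangle\cap B_i = E$ is established for $i=0,1$, all hypotheses of Proposition~\ref{al2.5} are in place and it directly yields a Steiner quasigroup $A\subseteq\c_\mathrm{Sq}$ with $A\equiv_{B_0}A_0$ and $A\equiv_{B_1}A_1$, as required. The main obstacle is the last case of the induction, where both factors of a product $o=u\cdot v\in \langle A_0 E\rangle\cap B_0$ lie outside $D$; this is precisely where both $D$-hypotheses are needed jointly, since either one alone would not suffice to eliminate such configurations.
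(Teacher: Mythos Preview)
Your overall strategy---reduce to Proposition~\ref{al2.5} by verifying $\langle A_iE\rangle\cap B_i=E$---is exactly the paper's. The set-theoretic observations ($F\subseteq E$, $E\subseteq D$, $A_i\cap D=F$) are correct and useful. The problem is the ``remaining case'' in your induction.

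That case is not a contradiction case at all, so the plan to ``force some element of $A_0\setminus F$ into $D$'' is misconceived. Take for instance $u=a_1\cdot e$, $v=a_2\cdot e$ with $a_1,a_2\in A_0\setminus F$ and $e\in E$: there is nothing preventing $o=u\cdot v\in B_0$ while $u,v\notin B_0$, and in that situation the conclusion must be $o\in E$, not a contradiction. Your isomorphisms do not obviously produce an element of $A_0\setminus F$ inside $D$ here: $\sigma_1^{-1}$ is only defined on $\langle A_1D\rangle$, and you only know $o\in D$, not that $u,v$ (or any nontrivial $A_0$-constituent of them) lie in the domain of $\sigma_1^{-1}$. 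Likewise $\psi$ only moves the whole configuration into $\langle A_1E\rangle\cap B_1$, where you face the identical obstruction. So as written, the inductive step does not close.

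The paper avoids induction entirely. Writing $o=t(\bar a_0,\bar e)=r(\bar b_0)$ with $\bar a_0\in A_0$, $\bar e\in E$, $\bar b_0\in B_0$, one pushes the \emph{identity} (rather than the individual subterms) through the maps: $\sigma_0$ gives $o=r(\bar d)$ for $\bar d\in D$; taking the corresponding $\bar b_1\in B_1$ via $\sigma_1$ and the corresponding $\bar a_1\in A_1$ via the fixed enumerations witnessing $A_0B_0\equiv_E A_1B_1$, one gets $t(\bar a_1,\bar e)=r(\bar b_1)$; applying $\sigma_1$ yields $t(\bar a_1,\bar e)=r(\bar d)=o$, hence $o=r(\bar b_1)\in B_1$, so $o\in B_0\cap B_1=E$. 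The point is that the whole term identity is transported at once, so no structural case analysis on $o=u\cdot v$ is needed; this is precisely what your induction cannot supply in the bad case.
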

\begin{proof}  We check that  $\langle A_0E\rangle \cap B_0 =E$ and $\langle A_1E\rangle \cap B_1 =E$. Then Proposition~\ref{al2.5} applies. 
 It is enough to check the first equality. Assume  $a\in \langle A_0E\rangle \cap B_0$. There are terms $t(\overline{x},\overline{y})$, $r(\overline{z})$  and finite tuples $\overline{a}_0\in A_0$, $\overline{e}\in E$  and  $\overline{b}_0\in B_0$  such that  $a= t(\overline{a}_0,\overline{e})= r(\overline{b}_0)$.  By the assumptions on $D$, there is a finite tuple $\overline{d}\in D$
such that $\overline{d} \equiv_{EA_0} \overline{b}_0$, so that  $t(a_0,\overline{e})=r(\overline{d})$. Now let $\overline{b}_1 \in B_1$ be such that $\overline{d} \equiv_{EA_1} \overline{b}_1$. Then $\overline{b}_1 \equiv_E \overline b_0$, and so there is $\overline{a}_1 \in A_1$ such that $\overline{a}_0\overline{b}_0 \equiv_E \overline{a}_1 \overline{b}_1$. Hence $t(\overline{a}_1,\overline{e})= r(\overline{b}_1)$. Since $\overline{d} \equiv_{EA_1} \overline{b}_1$, we have that $t(\overline{a}_1,\overline{e})= r(\overline{d})$. It follows that $a= r(\overline{b}_1)\in B_1$  and, therefore,  $a\in B_0\cap B_1 =E$.
\end{proof}

Proposition~\ref{al2.5} has been stated and proved for two Steiner triple systems $(A_0,B_0)$ and $(A_1,B_1)$, but in fact the result holds for arbitrary families $\{(A_i, B_i) \mid i<\omega\}$ of Steiner triple systems. We omit the proof, since it is a straightforward adaptation of that of Proposition~\ref{al2.5}.

\begin{remark}\label{al4}    Let $\{A_i\mid i\in I\}$ and $\{B_i\mid i\in I\}$ be families of subsets of  $\c_\mathrm{Sq}$    closed under product and such that   $A_i\cap B_i= A_j\cap B_j$, $E=B_i\cap B_j$ (if $i\neq j$),   $A_i B_i\equiv_E A_j B_j$  and     $\langle A_i E\rangle \cap B_i =E$.    Then there is some Steiner quasigroup $(A,\cdot)\subseteq (\c_\mathrm{Sq},\cdot)$ such that $A\equiv_{B_i} A_i$  for every $i\in I$.
\end{remark}

\section{\tpt and \nsop} \label{tp2andnsop1}

Recall that a formula $\varphi(\overline{x};\overline{y})$  has the \emph{tree property of the second kind} (\tpt)  in $T$ if in the monster model of $T$ there is an array of tuples $(\overline{a}_{ij}\mid i,j<\omega)$ and some natural number $k$ such  that for each $i < \omega$ the set $\{\varphi(\overline{x},\overline{a}_{ij})\mid  j<\omega\}$  is $k$-inconsistent, and for each $f:\omega\rightarrow \omega$ the path $\{\varphi(\overline{x},\overline{a}_{if(i)})\mid i<\omega \}$  is consistent.  We say that $T$ is \tpt if some formula has \tpt in $T$. Otherwise $T$ is \ntpt.

Also recall that the formula $\varphi(\overline{x},\overline{y})$ has the \emph{1-strong order property}, \sop, if  there is a tree of tuples of parameters $(\overline{a}_s\mid s\in 2^{<\omega})$  such that for every $f:\omega\rightarrow 2$,  the branch  $\{\varphi(\overline{x},\overline{a}_{f\restriction n})\mid n<\omega\}$  is consistent and for every $s,t\in 2^{<\omega}$  with  $s^\smallfrown 0\subseteq t$,   $\varphi(\overline{x},\overline{a}_t)\wedge \varphi(\overline{x},\overline{a}_{s^\smallfrown 1})$ is inconsistent. The theory $T$ is \sop  if some formula has \sop in $T$. Otherwise, it is \nsop.

\tpt and \sop, as well as their negations \ntpt and \nsop, are dividing lines in the classification of first-order theories. They were first introduced by Shelah in~\cite{Sh80a}.  \ntpt theories include simple and \nip theories, and have received a lot of attention recently 
-- see~\cite{Chernikov12} and ~\cite{ChernikovKaplan09}. \nsop theories are the first level in the $\mathrm{NSOP}_n\,$ hierarchy, a family of theories without the strict order property that properly extends the class of simple theories.  It is  not known whether \nsop and $\mathrm{NSOP}_2\,$ are equivalent. $\mathrm{NSOP}_2\,$ is equivalent to $\mathrm{NTP}_1\,$, the negation of the tree property of the first kind. Shelah proved that a theory is simple if and only if it is \ntpt and $\mathrm{NTP}_1\,$.  The class of \nsop  theories has recently become the object of close scrutiny and new natural examples are being discovered -- see ~\cite{ChernikovRamsey16}, \cite{KaplanRamsey17} and~\cite{KaplanRamseyShelah17}.
In this section we show that $T^\ast_\mathrm{Sq}$ is \tpt and \nsop, thus adding a further example of a \tpt and \nsop theory to those described in~\cite{ConantKruckman17}. 

\begin{remark}\label{tpt1}  In  any Steiner quasigroup, the following cancellation law holds:  
\[\forall xyz\,( x\cdot y= x\cdot z\rightarrow y=z)\, .\]
This is because if $x\cdot y= x\cdot z$,  then $y= x\cdot (x\cdot y) = x\cdot (x\cdot z)= z$.
\end{remark}

\begin{prop}\label{tpt2} The formula  $\varphi(x;y_1,y_2,y_3)\equiv x= (y_1\cdot (y_2\cdot (y_3\cdot x)))$   has \tpt  in  $T^\ast_\mathrm{Sq}$. Hence $T^\ast_\mathrm{Sq}$  is \tpt.
\end{prop}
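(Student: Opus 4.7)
The plan is to exhibit an explicit array $(\bar a_{ij})_{i,j<\omega}$ of parameter triples witnessing $\mathrm{TP}_2$ for $\varphi$, with $k=2$. First I would observe that inconsistency of rows is immediate from the cancellation law of Remark~\ref{tpt1}: applying $a\cdot$ and then $b\cdot$ to $x = a\cdot(b\cdot(c\cdot x))$ yields $c\cdot x = b\cdot(a\cdot x)$, so for fixed $a,b,x$ the third parameter is forced to be $c = x\cdot(b\cdot(a\cdot x))$. Consequently $\varphi(x;a,b,c) \wedge \varphi(x;a,b,c')$ entails $c=c'$.

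For the array itself I would choose pairwise distinct elements $p_i,r_i$ (for $i<\omega$) and $q_{ij}$ (for $i,j<\omega$) in $\c_\mathrm{Sq}$, and set $\bar a_{ij}:=(p_i,r_i,q_{ij})$. Such elements exist because the structure on a countable set with no defined products is a partial STS, and hence embeds into $\c_\mathrm{Sq}$ by Remark~\ref{mc5}(i). Row $2$-inconsistency then follows from the previous paragraph, since $q_{ij}\neq q_{ij'}$ whenever $j\neq j'$.

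For path consistency, given $f:\omega\to\omega$, I would fix the substructure $A=\langle p_i,r_i,q_{i,f(i)}:i<\omega\rangle\subseteq\c_\mathrm{Sq}$ and introduce pairwise distinct fresh elements $x,u_i,v_i$ outside $A$. On $B:=A\cup\{x\}\cup\{u_i,v_i:i<\omega\}$ I would extend $\P^A$ to a ternary relation $S$ by adjoining, for each $i<\omega$, the three blocks
\[
\{q_{i,f(i)},x,u_i\},\quad \{r_i,u_i,v_i\},\quad \{p_i,v_i,x\}.
\]
These are precisely the blocks traced out by the ``closed path'' $x\mapsto u_i\mapsto v_i\mapsto x$ obtained by multiplying successively by $q_{i,f(i)}$, $r_i$, $p_i$, which the formula $\varphi(x;p_i,r_i,q_{i,f(i)})$ encodes. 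After checking that $(B,S)$ is a partial STS extending $(A,\P^A)$, I would apply Remark~\ref{mc5}(ii) to obtain an embedding $\iota:(B,S)\to(\c_\mathrm{Sq},\P)$ fixing $A$ pointwise. Then $\iota(x)=p_i\cdot(r_i\cdot(q_{i,f(i)}\cdot\iota(x)))$ for every $i$, so $\iota(x)$ realizes $\{\varphi(x;\bar a_{i,f(i)}):i<\omega\}$.

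The main obstacle is verifying that $(B,S)$ really is a partial STS. The key observations are: every new block contains two elements of $\{x,u_i,v_i:i<\omega\}$, so no pair lying entirely in $A$ receives a new product; within each row the four pairings involving $x$ (and, similarly, those involving $u_i$ and $v_i$) have pairwise distinct partners; and across different rows the sets of new pairs are disjoint, because $u_i,v_i$ are fresh per $i$ and the coordinates of the triples $(p_i,r_i,q_{i,f(i)})$ are pairwise distinct across $i$. Granted this combinatorial bookkeeping, the amalgamation step furnished by Remark~\ref{mc5}(ii) closes the argument.
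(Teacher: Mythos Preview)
Your proof is correct and essentially parallel to the paper's, though the bookkeeping is organised differently. The paper builds a single large partial STS outside $\c_\mathrm{Sq}$ that simultaneously contains the entire parameter array \emph{and} a witness $d_f$ for every $f\in\omega^\omega$, together with the auxiliary points $a^\ast_{ijf},b^\ast_{ijf}$ playing the role of your $v_i,u_i$; it then checks this is a partial STS (using Lemma~\ref{mc2} repeatedly to glue the pieces $A_{ij}$) and embeds the whole thing at once via Remark~\ref{mc5}(i). You instead fix the parameters inside $\c_\mathrm{Sq}$ first and then, for each path $f$ separately, build an extension of the generated substructure $A$ and embed it over $A$ via Remark~\ref{mc5}(ii). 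Your per-path verification is lighter, since you need not track interactions between witnesses for different paths; the paper's global construction shows more (all paths realised compatibly in a single partial STS), but TP$_2$ only asks for consistency of each path on its own, so nothing is lost. Your derivation of row $2$-inconsistency, solving explicitly for $c=x\cdot(b\cdot(a\cdot x))$, is the same cancellation argument the paper invokes via Remark~\ref{tpt1}. One small point: when you say the new elements $x,u_i,v_i$ are ``outside $A$'', you should take them outside $\c_\mathrm{Sq}$ altogether (or at least treat $(B,S)$ as an abstract partial STS), so that the only structure on $B$ is the one you impose; otherwise the ambient products in $\c_\mathrm{Sq}$ could interfere. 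With that understood, your check that $(B,S)$ is a partial STS is complete, since each new block meets $A$ in exactly one point and the remaining pairs are visibly distinct across rows.
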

\begin{proof}  We embed in $(\c_\mathrm{Sq}, \P)$ a partial STS  which 
contains an array  $(a_ib_ic_{ij}\mid i,j<\omega)$  and  a sequence  $(d_f\mid f\in \omega^\omega)$  
such that for each $i \in \omega$ the set $\{\varphi(x;a_i,b_i,c_{ij})\mid j<\omega\}$  is   $2$-inconsistent, and each $d_f$  realizes the corresponding path $\{\varphi(x;a_i,b_i,c_{if(i)})\mid i<\omega\}$. The array will be chosen in such a way that $c_{ij}\neq c_{ik}$  for all  $j\neq k$, so that Remark~\ref{tpt1}  implies the inconsistency of  \[\varphi(x,a_i,b_i,c_{ij})\wedge \varphi(x,a_i,b_i,c_{ik})\,.\]  
We construct a suitable partial STS outside $\c_\mathrm{Sq}$. Remark~\ref{mc5} then gives the required embedding. Given $i,j<\omega$,   we choose a set  
\[A_{ij}=\{a_i,b_i,c_{ij}\}\cup \{d_f\mid f\in\omega^\omega, f(i)=j\}\cup \{a_{ijf}^\ast,b_{ijf}^\ast\mid  f\in\omega^\omega, f(i)=j\}\, \]
of elements not in $\c_\mathrm{Sq}$. It is understood that for all $i$, $j$ and $f$ the elements $a_i, b_i, c_{ij}$ and $d_f$ are pairwise distinct, and therefore  $A_{ij}\cap A_{ik}= \{a_i,b_i\} $  if  $j\neq k$    and   $A_{ij}\cap A_{lk}= \{d_f\mid  f(i)=j  \mbox{ and } f(l)=k\}$  if  $i\neq l$. Now we define a partial STS $(A_{ij},R_{ij})$ on each  set $A_{ij}$.  The relation $R_{ij}$  will contain the triples $(d_f,c_{ij}, b^\ast_{ijf})$, $(b^\ast_{ijf},b_i,a^\ast_{ijf})$,  $(d_f,a_i,a^\ast_{ijf})$ and all their permutations, as well as all the triples of the form $(a,a,a)$  with  $a\in A_{ij}$. 
 It is easy to check that no product is doubly defined.  Observe that this choice of $R_{ij}$  gives, in product notation,
   \[d_f= a_i\cdot a^\ast_{ijf}=a_i\cdot (b_i\cdot b^\ast_{ijf})= a_i\cdot (b_i\cdot (c_{ij}\cdot d_f))\, ,\]  
 and therefore for all $i, j < \omega$ we have that  $(d_f;a_i,b_i,c_{ij})$  satisfy  $\varphi(x;y_1,y_2,y_3)$.  Now, if we take $j\neq k$, then the two elements $a_i,b_i$ of the intersection $A_{ij}\cap A_ {ik}$ do not have a defined product either in $(A_{ij},R_{ij})$  or in $(A_{ik},R_{ik})$.   Hence,  $(A_{ij}\cup A_{ik}, R_{ij}\cup R_{ik})$  is a partial STS.  Let  $A_i=\bigcup_{j<\omega}A_{ij}$  and  $R_i=\bigcup_{j<\omega}R_{ij}$. By Lemma~\ref{mc2},  each  $(A_i,R_i)$ is a partial STS.  Now 
 let $i,l<\omega$ be different. Then $A_i\cap A_l= \{d_f\mid f\in \omega^\omega\}$,  and for $f\neq g$ the product of $d_f$ and $d_g$ is not defined either in $(A_i,R_i)$ or in $(A_l,R_l)$. Hence $(A_i\cup A_l,R_i\cup R_l)$ is a partial STS. Finally, let $A= \bigcup_{i<\omega}A_i$  and $R=\bigcup_{i<\omega}R_i$. Again by Lemma~\ref{mc2}, we have that $(A,R)$  is a partial STS. By Remark~\ref{mc5}, there is an embedding $h:(A,R)\rightarrow (\c_\mathrm{Sq},\P)$ and so for each $i,j<\omega$ and for each $f\in \omega^\omega$ such that $f(i)=j$,  
 \[h(d_f)=h(a_i)\cdot (h(b_i)\cdot(h(c_{ij})\cdot h(d_f)))\]
  and therefore  $(\c_\mathrm{Sq},\cdot)\models \varphi(h(d_f);h(a_i),h(b_i),h(c_{ij}))$.
\end{proof}

\begin{prop}\label{tpt3} In $T^\ast_\mathrm{Sq}$, nonalgebraic formulas of the form $\varphi(x;b,c)$  do not divide over $\emptyset$.
\end{prop}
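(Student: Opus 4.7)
The plan is to verify, for every $\emptyset$-indiscernible sequence $(b_i c_i)_{i<\omega}$ extending $bc$, that $\Sigma:=\{\varphi(x;b_i,c_i):i<\omega\}$ is consistent; this is exactly that $\varphi(x;b,c)$ does not divide over $\emptyset$.

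Since $\varphi(x;b,c)$ is nonalgebraic and $\acl(bc)=\langle bc\rangle$ by Corollary~\ref{ac3}, first pick a realization $a\models\varphi(x;b,c)$ with $a\notin\langle bc\rangle$. By indiscernibility, for each $i$ choose $a_i$ with $a_ib_ic_i\equiv_\emptyset abc$; then $a_i\models\varphi(x;b_i,c_i)$ and $a_i\notin B_i:=\langle b_ic_i\rangle$. In the non-degenerate case $b_i\neq c_i$, each $B_i$ is the three-element Steiner block $\{b_i,c_i,b_i\cdot c_i\}$.

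The aim is to produce a single $a^*\in\c_\mathrm{Sq}$ with $a^*\equiv_{B_i}a_i$ for every $i$, which will then realize $\Sigma$. I apply the infinite amalgamation of Remark~\ref{al4} with $A_i=\{a_i\}$ (closed under $\cdot$ since $a_i\cdot a_i=a_i$) and $B_i$ as above. The four hypotheses are: (i) $A_i\cap B_i=\emptyset$, constant because $a_i\notin B_i$; (ii) the pairwise intersections $B_i\cap B_j$ for $i\neq j$ coincide with a single set $E$; (iii) $A_iB_i\equiv_E A_jB_j$, which follows from $a_ib_ic_i\equiv_\emptyset a_jb_jc_j$ and the fact that the natural isomorphism fixes $E$ pointwise; (iv) $\langle A_iE\rangle\cap B_i=E$. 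Condition (iv) is automatic from the cancellation law (Remark~\ref{tpt1}): any equation $t(a_i,\bar e)\in B_i$ with $\bar e\in E\subseteq B_i$ and $t$ genuinely involving $a_i$ can be inverted, using $u\cdot(u\cdot v)=v$ recursively on the term structure, to express $a_i$ as a term in elements of $B_i$, forcing $a_i\in B_i$, a contradiction.

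The main obstacle is (ii): establishing that the pairwise intersections of the $B_i$'s are literally one and the same set $E$, not merely uniform in type. This should follow from $\emptyset$-indiscernibility together with the three-element size of each $B_i$. Indeed, any nontrivial overlap $B_0\cap B_1\neq\emptyset$ corresponds to some element of $\{b_0,c_0,b_0\cdot c_0\}$ coinciding with an element of $\{b_1,c_1,b_1\cdot c_1\}$; a case-by-case analysis shows that each such coincidence, when propagated along the indiscernible sequence, forces the relevant coordinate to be literally constant in $i$, so that $B_i\cap B_j=\bigcap_k B_k$ uniformly. Sorting through these overlap patterns is the most delicate part of the argument.
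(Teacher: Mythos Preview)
Your proposal is essentially the paper's own proof: both arguments apply Remark~\ref{al4} with $A_i=\{a_i\}$ and $B_i=\langle b_i,c_i\rangle$, and verify the hypotheses by a case split on $|B_i\cap B_j|$ (the paper also separates the degenerate case $b=c$, which you mention only implicitly).

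One point to tighten is your justification of hypothesis~(iv). The ``recursive term inversion'' you invoke is not a valid general principle in Steiner quasigroups: when $x$ occurs more than once in $t(x,\bar e)$, the unwinding $u\cdot(u\cdot v)=v$ need not eliminate all occurrences of $x$, and in fact $\langle A_iE\rangle\cap B_i=E$ can genuinely fail once $|E|\geq 2$. What saves you here is precisely that $|E|\leq 1$, so that $\langle a_i,E\rangle$ is either $\{a_i\}$ or the three-element block $\{a_i,e,a_i\cdot e\}$; the paper makes exactly this observation, writing $\langle a_i,e\rangle=\{a_i,e,a_i\cdot e\}$ explicitly and checking $a_i\cdot e\neq b_i,c_i$ by cancellation. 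So your~(iv) is correct, but the reason is the smallness of $E$, not a general inversion algorithm.
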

\begin{proof} Assume for a contradiction that $\varphi(x;b,c)$  divides over $\emptyset$, and let $(b_ic_i\mid i<\omega)$,  where  $b_0c_0=bc$, be an indiscernible sequence that witnesses the dividing, so that $\{\varphi(x;b_i,c_i)\mid i<\omega\}$ is inconsistent.  We will use Remark~\ref{al4} to contradict the inconsistency of this set.   Choose   $(a_i\mid i<\omega)$  such that  $\models \varphi(a_0;b_0,c_0)$  and  $a_ib_ic_i\equiv a_jb_jc_j$  for all $i,j<\omega$.  Since $\varphi(x;b,c)$ is not algebraic, we may assume that $a_i\not\in \langle b_i,c_i\rangle$. There are several cases to be considered. Let us consider first the case $b_0=c_0$. This implies $b_i= c_i$ for all $i<\omega$.  Notice that $b_i\neq b_j$  if $i\neq j$.  By Remark~\ref{al4},  with  $A_i=\langle a_i\rangle = \{a_i\}$, $B_i =\langle b_i,c_i\rangle = \{b_i\}$  and $E=\emptyset$, there is $a$  such that  $a\equiv_{b_i}a_i$  for every $i<\omega$. Then $a$ realizes each $\varphi(x;b_i,c_i)$.   Now assume that $b_0 \neq c_0$, so that for all $i$ we have  $b_i\neq c_i$.  If  $\langle b_i,c_i\rangle$  and $\langle b_j,c_j\rangle$  (with $i\neq j$)  share two elements, then they are equal  and  we get $b_i=b_j$  and $c_i=c_j$  for all $i,j$.  Assume $\langle b_i,c_i\rangle$  and $\langle b_j,c_j\rangle$  (with $i\neq j$)  share one element $e$. Then without loss of generality $b_i\cdot c_i = e$ for all $i$. We apply again Remark~\ref{al4}, with $A_i=\langle a_i\rangle = \{a_i\}$, $B_i =\langle b_i,c_i\rangle = \{b_i,c_i,e\}$  and $E=\{e\}$. Notice that $\langle a_i,e\rangle =\{a_i,e,a_i\cdot e\}$  and $a_i\cdot e\neq b_i,c_i$.  The case where  $\langle b_i,c_i\rangle\cap\langle b_j,c_j\rangle=\emptyset$  (with $i\neq j$)  is similar, with $E=\emptyset$.
\end{proof}

The next corollary shows that the formula in Proposition~\ref{tpt2} is optimal, in the sense that no formula of the form $\varphi(x;\overline{y})$, where $\overline{y}$ has fewer than three variables, is \tpt.
\begin{cor}\label{tpt4} In $T^\ast_\mathrm{Sq}$, no formula of the form $\varphi(x;y_1,y_2)$ has \tpt.
\end{cor}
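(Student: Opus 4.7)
The plan is to combine Proposition~\ref{tpt3} with elimination of $\exists^\infty$ (Corollary~\ref{ac4}) and a finite pigeonhole. Assume, toward a contradiction, that $\varphi(x;y_1,y_2)$ has \tpt, witnessed by an array $((b_{ij},c_{ij}))_{i,j<\omega}$ and an integer $k$ with $k$-inconsistent rows and consistent paths. A standard Ramsey/compactness extraction allows me to assume that the rows are mutually indiscernible over $\emptyset$ while preserving the witness; in particular, each row $(b_{ij},c_{ij})_{j<\omega}$ is an indiscernible sequence over $\emptyset$ and every instance $\varphi(x;b_{ij},c_{ij})$ is consistent, since any path through cell $(i,j)$ realizes it.

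For each $i$, the $k$-inconsistency of row $i$ together with the indiscernibility of that row shows that $\varphi(x;b_{i0},c_{i0})$ divides over $\emptyset$. Being consistent, Proposition~\ref{tpt3} forces it to be algebraic. Corollary~\ref{ac4} then furnishes a single $N\in\omega$ such that every algebraic instance of $\varphi(x;y_1,y_2)$ has strictly fewer than $N$ realizations in $\c_\mathrm{Sq}$; by indiscernibility this bound applies uniformly to every cell of the array.

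Now set $X=\varphi(\c_\mathrm{Sq};b_{00},c_{00})$, so $|X|<N$, and choose $M=(k-1)(N-1)+1$ paths $f_0,\dots,f_{M-1}:\omega\to\omega$ satisfying $f_s(0)=0$ for every $s$ and with $f_0(1),\dots,f_{M-1}(1)$ pairwise distinct. Each realization $a_{f_s}$ lies in $X$, so the pigeonhole principle produces indices $s_1<\cdots<s_k$ with $a_{f_{s_1}}=\cdots=a_{f_{s_k}}=:a$. Then $a$ satisfies $\varphi(x;b_{1,f_{s_t}(1)},c_{1,f_{s_t}(1)})$ for $k$ pairwise distinct column indices of row~$1$, contradicting its $k$-inconsistency.

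The step I expect to be most delicate is the extraction of a row-indiscernible array witnessing the same \tpt configuration with the same $k$, so that both the divided row~$0$ and the $k$-inconsistent row~$1$ required by the pigeonhole survive intact. This is a familiar but slightly fiddly compactness maneuver; once it is in place, the rest of the argument is purely finitary and uses only Proposition~\ref{tpt3} and Corollary~\ref{ac4}.
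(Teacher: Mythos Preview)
Your proof is correct and uses the same ingredients as the paper's, but arranged in the contrapositive order. The paper argues: each instance $\varphi(x;b_{0j},c_{0j})$ is nonalgebraic (asserted without proof, though the justification is precisely your pigeonhole argument using paths through row~$0$ and distinct columns of row~$1$), and row~$0$ witnesses dividing, contradicting Proposition~\ref{tpt3}. You instead use Proposition~\ref{tpt3} to force algebraicity from dividing, and then run the pigeonhole to reach a contradiction. Logically these are the same argument.

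One simplification: the appeal to Corollary~\ref{ac4} is unnecessary. Once you know that the single set $X=\varphi(\c_\mathrm{Sq};b_{00},c_{00})$ is finite, its actual cardinality $|X|$ is all you need for the pigeonhole; a uniform bound over all parameter choices plays no role. Also, for the extraction step you flag as delicate, note that the paper only requires $b_{ij}c_{ij}\equiv b_{ik}c_{ik}$ within each row (not full mutual indiscernibility), which already suffices to witness dividing; this is the same standard array refinement you invoke.
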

\begin{proof} Let $\{\varphi(x;b_{ij}, c_{ij})\mid i,j<\omega\}$ be an array of formulas that witnesses \tpt, where the tuple $\overline{x}$ in our definition of \tpt is the single variable $x$ and the tuple of parameters $\overline{a}_{ij}$ consists of the two elements $b_{ij}c_{ij}$. We can find such an array with the property that $b_{ij}c_{ij}\equiv b_{ik}c_{ik}$ for all $i,j,k<\omega$. Consider the first row $\{\varphi(x;b_{0j}, c_{0j})\mid j<\omega\}$. Each $\varphi(x;b_{0j},c_{0j})$ is nonalgebraic. Since $\{\varphi(x;b_{0j},c_{0j})\mid j<\omega\}$ is $k$-inconsistent for some $k$, the row witnesses that $\varphi(x;b_{00},c_{00})$ $k$-divides over $\emptyset$. But this contradicts Proposition~\ref{tpt3}.
\end{proof}

The notation  $\overline{b}_0\ind^u_M \overline{b}_1$  used in Fact~\ref{sop1} below  means that $\tp(\overline{b}_0/M\overline{b}_1)$ is a coheir of  $\tp(\overline{b}_0/M)$, and satisfaction of formulas is meant in the monster model of the theory.

\begin{fact}\label{sop1} Assume $\varphi(\overline{x},\overline{y})$  witnesses \sop. Then there are $M$, $\overline{a}_0,\overline{a}_1,\overline{b}_0,\overline{b}_1$  so that  $\overline{b}_0\ind^u_M \overline{b}_1$, $\overline{b}_0\ind^u_M \overline{a}_0$, 
$\overline{a}_0\overline{b}_0\equiv_M \overline{a}_1\overline{b}_1$ and $\models \varphi(\overline{a}_0,\overline{b}_0)\wedge \varphi(\overline{a}_1,\overline{b}_1)$  but  $\varphi(\overline{x},\overline{b}_0)\wedge \varphi(\overline{x},\overline{b}_1)$ is inconsistent.
\end{fact}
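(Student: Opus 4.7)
The statement is a standard symmetric reformulation of $\mathrm{SOP}_1$ (in the spirit of Chernikov--Ramsey), and the plan is to extract the required configuration from an $\mathrm{SOP}_1$-tree by tree-modeling together with a coheir extension.

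First, fix an $\mathrm{SOP}_1$-tree $(\overline{c}_s)_{s \in 2^{<\omega}}$ witnessing that $\varphi$ has $\mathrm{SOP}_1$, and for each $f : \omega \to 2$ pick a realization $\overline{d}_f$ of the consistent branch $\{\varphi(\overline{x}, \overline{c}_{f\restriction n}) \mid n < \omega\}$. Apply the modeling theorem for strongly indiscernible trees to the expanded system $((\overline{c}_s)_s, (\overline{d}_f)_f)$ to pass to a strongly tree-indiscernible version over $\emptyset$ that retains both branch consistency and the $\mathrm{SOP}_1$-inconsistency (i.e.\ $\varphi(\overline{x}, \overline{c}_t) \wedge \varphi(\overline{x}, \overline{c}_{s^\smallfrown 1})$ is inconsistent whenever $s^\smallfrown 0 \subseteq t$). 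Next, fix a sufficiently saturated model $M$ and, by a further extraction, arrange that the configuration is strongly tree-indiscernible over $M$ and that the sequence of pairs $((\overline{d}_{g_n}, \overline{c}_{0^n 1}))_{n < \omega}$, with $g_n$ a fixed branch through $0^n 1$, is a reverse coheir sequence over $M$: $\tp(\overline{d}_{g_n}\overline{c}_{0^n 1} / M \cup \{\overline{d}_{g_k}\overline{c}_{0^k 1} : k > n\})$ is finitely satisfiable in $M$ for every $n$.

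Second, set $\overline{b}_0 := \overline{c}_{01}$, $\overline{b}_1 := \overline{c}_1$, $\overline{a}_0 := \overline{d}_{g_1}$, $\overline{a}_1 := \overline{d}_{g_0}$. Branch consistency gives $\varphi(\overline{a}_i, \overline{b}_i)$ for $i = 0, 1$; strong tree-indiscernibility over $M$, applied to the tree automorphism that sends the subtree rooted at $0$ onto the full tree, yields $\overline{a}_0 \overline{b}_0 \equiv_M \overline{a}_1 \overline{b}_1$; the reverse coheir property of Step 1 yields $\overline{b}_0 \ind^u_M \overline{b}_1$; and the inconsistency of $\varphi(\overline{x}, \overline{b}_0) \wedge \varphi(\overline{x}, \overline{b}_1)$ is the $\mathrm{SOP}_1$ instance with $s = \emptyset$ and $t = 01$.

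Third, to secure $\overline{b}_0 \ind^u_M \overline{a}_0$, replace $\overline{a}_0$ by $\overline{a}'_0$ with $\overline{a}'_0 \equiv_{M \overline{b}_0} \overline{a}_0$ and $\tp(\overline{b}_0 / M \overline{a}'_0)$ finitely satisfiable in $M$. Such $\overline{a}'_0$ exists by the standard coheir extension lemma applied to $\tp(\overline{b}_0 / M)$ over $M \overline{a}_0$, and this choice preserves both $\varphi(\overline{a}'_0, \overline{b}_0)$ and $\overline{a}'_0 \overline{b}_0 \equiv_M \overline{a}_1 \overline{b}_1$, while leaving the data involving $\overline{b}_1$ untouched.

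The main obstacle is precisely this last step: unlike $\overline{b}_0 \ind^u_M \overline{b}_1$, the independence $\overline{b}_0 \ind^u_M \overline{a}_0$ is not automatic from the tree structure, because $\overline{b}_0$ and $\overline{a}_0$ sit at the same level of the coheir sequence of Step 1; it must be arranged by a separate coheir extension performed after the other conditions have been secured, taking care that this extension is executed on the right side (reselecting $\overline{a}_0$ rather than $\overline{b}_0$) so as not to destroy the already established $\overline{b}_0 \ind^u_M \overline{b}_1$.
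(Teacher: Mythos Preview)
The paper does not prove this fact; it simply cites Proposition~5.2 of Chernikov--Ramsey. Your sketch is in the spirit of that argument, but Step~3 contains a genuine error.

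You propose to replace $\overline{a}_0$ by some $\overline{a}'_0$ with $\overline{a}'_0 \equiv_{M\overline{b}_0} \overline{a}_0$ and $\tp(\overline{b}_0/M\overline{a}'_0)$ finitely satisfiable in $M$. But the first condition makes the second one impossible to achieve unless it already held: if $\sigma$ is an automorphism fixing $M\overline{b}_0$ pointwise with $\sigma(\overline{a}_0)=\overline{a}'_0$, then $\sigma$ fixes $M$ and $\overline{b}_0$, so for any formula $\psi$ and $\overline{m}\in M$ we have $\models\psi(\overline{b}_0,\overline{a}'_0,\overline{m})$ iff $\models\psi(\overline{b}_0,\overline{a}_0,\overline{m})$, and $\psi(\overline{y},\overline{a}'_0,\overline{m})$ is realized in $M$ iff $\psi(\overline{y},\overline{a}_0,\overline{m})$ is. Hence $\overline{b}_0\ind^u_M\overline{a}'_0$ iff $\overline{b}_0\ind^u_M\overline{a}_0$. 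The ``coheir extension lemma'' you invoke produces a new realization of $\tp(\overline{b}_0/M)$ that is $\ind^u$-independent from $\overline{a}_0$; it moves $\overline{b}_0$, not $\overline{a}_0$, and pulling back by an $M$-automorphism only gives $\overline{a}''_0\equiv_M\overline{a}_0$, not $\overline{a}''_0\equiv_{M\overline{b}_0}\overline{a}_0$.

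This is not a minor patch: your own diagnosis at the end is correct that moving $\overline{b}_0$ at this stage would destroy $\overline{b}_0\ind^u_M\overline{b}_1$, so the fix cannot be done after the fact on either side. In the Chernikov--Ramsey argument both coheir conditions are produced \emph{simultaneously} from the construction: one arranges that the pair $(\overline{a}_0,\overline{b}_0)$ (indeed the whole relevant level of the tree together with its branch realization) is a term of a coheir Morley sequence over $M$, so that $\tp(\overline{b}_0/M\overline{a}_0\overline{b}_1\overline{a}_1)$ is finitely satisfiable in $M$ in one stroke, giving both $\overline{b}_0\ind^u_M\overline{b}_1$ and $\overline{b}_0\ind^u_M\overline{a}_0$ by monotonicity. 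Your Step~1 almost does this, but by packaging $\overline{d}_{g_n}$ and $\overline{c}_{0^n1}$ into the \emph{same} term of the sequence you lose exactly the independence between them; the remedy is to interleave the realizations and the tree parameters as separate terms of the coheir sequence (or, equivalently, to choose the branch realization $\overline{a}_0$ only after the tree has been laid out as a coheir sequence, so that $\overline{b}_0$ is automatically coheir over $M\overline{a}_0$).
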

\begin{proof} Proposition 5.2 in~\cite{ChernikovRamsey16}.
\end{proof}

\begin{prop}\label{sop2} $T^\ast_\mathrm{Sq}$  is  \nsop.
\end{prop}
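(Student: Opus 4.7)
The plan is to prove NSOP$_1$ by contradiction, using the criterion in Fact~\ref{sop1} and the amalgamation result in Proposition~\ref{al2.5}. Suppose $\varphi(\bar x,\bar y)$ witnesses SOP$_1$. Apply Fact~\ref{sop1} to obtain a small model $M$ and tuples $\bar a_0,\bar a_1,\bar b_0,\bar b_1$ with $\bar b_0\ind^u_M\bar b_1$, $\bar b_0\ind^u_M\bar a_0$, $\bar a_0\bar b_0\equiv_M\bar a_1\bar b_1$, $\models\varphi(\bar a_0,\bar b_0)\wedge\varphi(\bar a_1,\bar b_1)$, while $\varphi(\bar x,\bar b_0)\wedge\varphi(\bar x,\bar b_1)$ is inconsistent. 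Set $A_i=\langle M\bar a_i\rangle$ and $B_i=\langle M\bar b_i\rangle$ for $i=0,1$, and set $E=M$. I will apply Proposition~\ref{al2.5} to produce $\bar a$ realizing both $\varphi(\bar x,\bar b_0)$ and $\varphi(\bar x,\bar b_1)$, contradicting inconsistency.

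The key preliminary step is to verify from coheir independence that intersections collapse to $M$. Specifically, to show $\langle M\bar a_0\rangle\cap\langle M\bar b_0\rangle=M$, take $c$ in the intersection, so $c=t(\bar m,\bar a_0)=s(\bar m',\bar b_0)$ for some $L$-terms and parameters from $M$. The formula $t(\bar m,\bar a_0)=s(\bar m',\bar y)$ belongs to $\tp(\bar b_0/M\bar a_0)$, and finite satisfiability in $M$ produces $\bar b'\in M$ with $t(\bar m,\bar a_0)=s(\bar m',\bar b')$; but the right hand side lies in $M$, so $c\in M$. The same argument with $\bar b_0\ind^u_M\bar b_1$ gives $B_0\cap B_1=M$, and the equality $A_1\cap B_1=M$ follows from $A_0 B_0\equiv_M A_1 B_1$.

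Now verify the hypotheses of Proposition~\ref{al2.5}: each $A_i$ and $B_i$ is closed under product by Corollary~\ref{ac3}; $A_0\cap B_0=A_1\cap B_1=M$; $E=B_0\cap B_1=M$; the congruence $A_0 B_0\equiv_E A_1 B_1$ follows from $\bar a_0\bar b_0\equiv_M\bar a_1\bar b_1$ and quantifier elimination; and $\langle A_i E\rangle\cap B_i=A_i\cap B_i=M=E$. Proposition~\ref{al2.5} then yields $A\subseteq\c_\mathrm{Sq}$ with $A\equiv_{B_0}A_0$ and $A\equiv_{B_1}A_1$. Transporting the enumeration gives a tuple $\bar a\subseteq A$ with $\bar a\bar b_0\equiv\bar a_0\bar b_0$ and $\bar a\bar b_1\equiv\bar a_1\bar b_1$, so $\models\varphi(\bar a,\bar b_0)\wedge\varphi(\bar a,\bar b_1)$, the desired contradiction.

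The main obstacle is the intersection/coheir argument: one must be precise about using finite satisfiability in $M$ to keep $\langle M\bar a_0\rangle\cap\langle M\bar b_0\rangle$, $\langle M\bar a_1\rangle\cap\langle M\bar b_1\rangle$ and $\langle M\bar b_0\rangle\cap\langle M\bar b_1\rangle$ all equal to $M$, so that the fourth hypothesis $\langle A_iE\rangle\cap B_i=E$ of Proposition~\ref{al2.5} (the genuinely restrictive one) is satisfied. Once this is in place, the rest of the proof is a direct application of the amalgamation lemma and quantifier elimination.
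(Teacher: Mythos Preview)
Your proof is correct, but it takes a different and somewhat cleaner route than the paper's. The paper replaces each of $\bar a_i,\bar b_i$ by an enumeration of the substructure it generates \emph{without} adjoining $M$; then $E=\bar b_0\cap\bar b_1$ is only a subset of $M$, and the key hypothesis $\langle A_iE\rangle\cap B_i=E$ of Proposition~\ref{al2.5} is not directly available. To get around this, the paper uses the coheir hypothesis $\bar b_0\ind^u_M\bar a_0$ to produce a tuple $\bar d$ with $\bar d\equiv_{E\bar a_0}\bar b_0$ and $\bar d\equiv_{E\bar a_1}\bar b_1$, and then invokes Proposition~\ref{al3}, which internally verifies the intersection condition and reduces to Proposition~\ref{al2.5}.

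You instead absorb $M$ into each $A_i$ and $B_i$ from the outset. This forces $E=B_0\cap B_1=M$ (via $\bar b_0\ind^u_M\bar b_1$) and collapses the restrictive hypothesis $\langle A_iE\rangle\cap B_i=E$ to $A_i\cap B_i=M$, which is exactly what finite satisfiability of $\tp(\bar b_0/M\bar a_0)$ in $M$ yields. So you can apply Proposition~\ref{al2.5} directly and never need Proposition~\ref{al3} or the auxiliary witness $\bar d$. Your argument is shorter; the paper's has the minor advantage of isolating Proposition~\ref{al3} as a reusable lemma showing that coheir-style data over $E$ already implies the intersection hypothesis of Proposition~\ref{al2.5}.
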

\begin{proof}    Assume $\varphi(\overline{x};\overline{y})$ witnesses \sop. By Fact~\ref{sop1}, there are tuples $\overline{a}_0,\overline{b}_0,\overline{a}_1,\overline{b}_1$ and a model $M$  such that  
\begin{itemize}
\item $\overline{a}_0\overline{b}_0\equiv_M \overline{a}_1\overline{b}_1$, 
\item $(\c_\mathrm{Sq},\cdot)\models \varphi(\overline{a}_0,\overline{b}_0)\wedge \varphi(\overline{a}_1,\overline{b}_1)$,  
\item $\varphi(\overline{x},\overline{b}_0)\wedge \varphi(\overline{x},\overline{b}_1)$ is inconsistent, and 
\item the types $\tp(\overline{b}_0/M\overline{b}_1)$ and $\tp(\overline{b}_0/M\overline{a}_0)$ 
are coheirs of their restriction to $M$. 
\end{itemize}
Our goal is to show that we can realize $\varphi(\overline{x},\overline{b}_0)\wedge \varphi(\overline{x},\overline{b}_1)$.   Nothing changes if one replaces each tuple by a tuple enumerating the substructure generated by it  and we will assume that the replacement has been made.

  Now  let $\overline{e}$ enumerate $\overline{b}_0\cap \overline{b}_1$.   Since $\tp(\overline{b}_0/M\overline{b}_1)$ does not fork over $M$, $\overline{e}$ is a tuple of $M$.  We claim that there is a tuple $\overline{d}$  such that  $\overline{d}\equiv_{\overline{e}\overline{a}_0} \overline{b}_0$   and   $\overline{d}\equiv_{\overline{e}\overline{a}_1} \overline{b}_1$.  Let $p(\overline{x},\overline{y})=\tp(\overline{a}_0\overline{b}_0/\overline{e})$. Note that $p(\overline{x},\overline{y})=\tp(\overline{a}_1\overline{b}_1/\overline{e})$.  We want to check the consistency of $p(\overline{a}_0,\overline{y})\cup p(\overline{a}_1,\overline{y})$.  Let  $\psi(\overline{x},\overline{y})\in p(\overline{x},\overline{y})$. 
   Since  $\tp(\overline{b}_0/M\overline{a}_0)$ is a coheir of its restriction to $M$, there  is some tuple $\overline{m}\in M$  such that  $\models \psi(\overline{a}_0,\overline{m})$.  Since $\overline{a}_0\equiv_M \overline{a}_1$, $\models \psi(\overline{a}_1,\overline{m})$ and, therefore, $\psi(\overline{a}_0,\overline{y})\wedge \psi(\overline{a}_1,\overline{y})$ is consistent.
  
 Finally, note that the coheir assumptions imply additionally that $\overline{a}_0\cap\overline{b}_0$  and  $\overline{a}_1\cap\overline{b}_1$  are contained in $M$, and hence they coincide.
  Then Proposition~\ref{al3} gives a tuple $\overline{c}$  such that   $\overline{c}\equiv_{\overline{b}_0}a_0$   and  $\overline{c}\equiv_{\overline{b}_1}a_1$.  But then   $\models \varphi(\overline{c};\overline{b}_0)\wedge \varphi(\overline{c};\overline{b}_1)$.
  
\end{proof}

\section{Hyperimaginaries and imaginaries} \label{section8}

In this section we prove that $T^\ast_\mathrm{Sq}$ has elimination of hyperimaginaries and weak elimination of imaginaries. We use the method due to Conant and described in~\cite{Conant17}. We first discuss briefly its main ideas.

Consider an arbitrary complete theory $T$. Let $\overline{a}$ be a tuple in the monster model of $T$, possibly infinite, and let $E$ be an equivalence relation between tuples of the same length as $\overline{a}$. Assume that $E$ is type-definable over the empty set. Then $\overline{a}_E$ is a hyperimaginary; if $\overline{a}$ is finite and $E$ is definable, it is an imaginary.  It is well known that if there is a (possibly infinite) tuple $\overline{b}$  such that   $\overline{a}_E \in \dcl(\overline{b})$  and  $\overline{b}\in\bdd(\overline{a}_E)$, then $\overline{a}_E$ is eliminable (see, for instance, Lemma 18.6 in~\cite{Casanovas11}).  When $\ov{a}_E$ is an imaginary, the tuple $\ov{b}$ can be chosen to be finite and  
in $\acl(\ov{a}_E)$. If for every hyperimaginary $\ov{a}_E$ such a tuple $\ov{b}$ can be found, then $T$ has elimination of hyperimaginaries and weak elimination of imaginaries. 

\begin{defi} Let $\overline{a}_E$ be as above. Then $\Sigma(\ov{a},E)$ is the set of all the subtuples $\ov{c}$ of $\ov{a}$ for which there is an indiscernible sequence $(\ov{a}_i\mid i<\omega)$  with $\ov{a}=\ov{a}_0$ and $E(\ov{a}_i,\ov{a}_j)$ for all $i,j$, and such that $\ov{c}$ is the common intersection of all the $\ov{a}_i$.  The set  $\Sigma(\ov{a},E)$ is partially ordered by the relation of being a subtuple. 
\end{defi}

\begin{fact}\label{im1} Let $\overline{a}_E$ be a hyperimaginary.
\begin{enumerate}
\item There are minimal elements in $\Sigma(\ov{a},E)$.
\item If $\ov{b}$ is a minimal element of $\Sigma(\ov{a},E)$, then $\ov{b}\in \bdd(\ov{a}_E)$.
\item Assume  $\ov{a}$ enumerates an algebraically closed set and there is a ternary relation $\ind$ between subsets of the monster model of $T$ with the following properties:
\begin{enumerate}
\item Invariance: if $A\ind_C B$ and $f$ is an automorphism of the monster model, then $f(A)\ind_{f(C)}f(B)$.
\item Monotonicity: if $A\ind_C B$, then  $A_0\ind_C B_0$  for every $A_0\subseteq A$ and $B_0\subseteq B$.
\item Full existence over algebraically closed sets:  for all $A,B,C$, if $C$ is algebraically closed, then $A^\prime\ind_C B$  for some $A^\prime\equiv_C A$.
\item Stationarity: if $A,A^\prime,B,C$ are algebraically closed sets such that  $A\equiv_C A^\prime$, $C\subseteq A\cap B$,  $A\ind_C B$ and $A^\prime\ind_C B$, then $A\equiv_B A^\prime$.
\item Freedom:  for all $A,B,C$, if  $A\ind_C B$ and  $C\cap (AB)\subseteq D\subseteq C$, then   $A\ind_D B$.
\end{enumerate}
Then $\ov{a}_E\in \dcl(\ov{b})$  for every $\ov{b}\in \Sigma(\ov{a},E)$.
\end{enumerate}
\end{fact}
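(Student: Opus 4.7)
The three parts admit different strategies. Part~1 is a Zorn's Lemma argument on $\Sigma(\ov{a},E)$ ordered by subtuple inclusion. The set is nonempty ($\ov{a}$ lies in it, via the constant sequence). The key closure property is that $\Sigma(\ov{a},E)$ is stable under intersecting position-sets: given $\ov{c}_1, \ov{c}_2 \in \Sigma(\ov{a},E)$ witnessed by indiscernible sequences with common-agreement position-sets $K_1$ and $K_2$, a compactness argument combining the two partial EM-types yields a single indiscernible sequence in the $E$-class of $\ov{a}$, starting with $\ov{a}$, whose common-agreement set is exactly $K_1 \cap K_2$. This provides lower bounds for descending chains, and Zorn delivers minimal elements.

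For part~2, I argue by contradiction. Let $\ov{b}$ be minimal with position-set $J$, and assume $\ov{b}\notin\bdd(\ov{a}_E)$. The $\Aut(\c/\ov{a}_E)$-orbit of $\ov{b}$ is unbounded, and each conjugate arises as the $J$-subtuple of some $\sigma(\ov{a})$ in the $E$-class. Erd\H{o}s--Rado extracts from these a countable indiscernible sequence with pairwise distinct $J$-projections; shifting by an automorphism so that the first term is $\ov{a}$ produces an indiscernible sequence in $\Sigma(\ov{a},E)$ whose common-agreement position-set $K$ satisfies $K \cap J \subsetneq J$ (since by indiscernibility, distinctness at a single position propagates to every pair of terms). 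Closure under intersection from part~1 then yields $\ov{a}\upharpoonright(K \cap J) \in \Sigma(\ov{a},E)$, a strict subtuple of $\ov{b}$, contradicting minimality.

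For part~3, fix $\ov{b} \in \Sigma(\ov{a},E)$ witnessed by $(\ov{a}_i)$; given $\ov{a}' \equiv_{\ov{b}} \ov{a}$, the goal is $E(\ov{a},\ov{a}')$. By a standard extraction preserving $\ov{a}_0 = \ov{a}$ and the common-intersection structure, arrange $(\ov{a}_i)$ to be indiscernible over $\acl(\ov{b})$; then each $\ov{a}_i \equiv_{\acl(\ov{b})} \ov{a}$ and $E(\ov{a}_i,\ov{a})$. By iterating full existence over $\acl(\ov{b})$, build a sequence $(\ov{e}_i)$ with $\ov{e}_0=\ov{a}$, $\ov{e}_{i+1}\equiv_{\acl(\ov{b})}\ov{a}$, and $\ov{e}_{i+1}\ind_{\acl(\ov{b})}\ov{e}_0\dots\ov{e}_i$, realizing the same EM-type over $\acl(\ov{b})$ as $(\ov{a}_i)$ so that $E(\ov{e}_i,\ov{e}_j)$ transfers via type-definability of $E$. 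Another application of full existence gives $\ov{a}''$ with $\ov{a}''\equiv_{\acl(\ov{b})}\ov{a}$ and $\ov{a}''\ind_{\acl(\ov{b})}\ov{a}\ov{a}'$. Stationarity applied to $\ov{e}_1$ and $\ov{a}''$ over $\acl(\ov{b})$ with algebraically closed base $\ov{a}=\ov{e}_0$ (which contains $\acl(\ov{b})$) yields $\ov{e}_1 \equiv_{\ov{a}} \ov{a}''$, hence $E(\ov{a}'',\ov{a})$. A symmetric construction with $\ov{a}'$ in place of $\ov{a}$ (valid because $\ov{a}'$ is algebraically closed and, being $\ov{b}$-conjugate to $\ov{a}$, plays the same role over $\ov{b}$) gives $E(\ov{a}'',\ov{a}')$; transitivity of $E$ concludes.

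The main obstacle will be the construction of $(\ov{e}_i)$ in part~3: producing from full existence alone a free-extension sequence that genuinely realizes the EM-type of the original $(\ov{a}_i)$ requires careful inductive bookkeeping, and may involve invoking freedom to shift between $\ov{b}$ and $\acl(\ov{b})$ as the independence base. The symmetric step on the $\ov{a}'$ side is also delicate, since the automorphism witnessing $\ov{a}'\equiv_{\ov{b}}\ov{a}$ need not fix $\acl(\ov{b})$ pointwise, and the axioms must be reapplied uniformly on both sides. This interplay between the algebraic-closure hypotheses demanded by stationarity and the potentially non-algebraically-closed base $\ov{b}$ is the subtlest technical point of the argument.
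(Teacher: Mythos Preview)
The paper's own proof is simply a citation to Conant's Lemmas~5.2, 5.4 and~5.5, but the paper reproduces (and adapts) Conant's argument for part~3 in the proof of its final Proposition, so one can compare against that. Your sketches for parts~1 and~2 are along standard lines and broadly consistent with that source.

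For part~3 there is a genuine gap, located exactly where you flag ``the main obstacle''. You propose to build, by iterated full existence over $\acl(\ov{b})$, a sequence $(\ov{e}_i)$ with $\ov{e}_0=\ov{a}$, $\ov{e}_{i+1}\ind_{\acl(\ov{b})}\ov{e}_0\cdots\ov{e}_i$, and the \emph{same EM-type} over $\acl(\ov{b})$ as the witnessing sequence $(\ov{a}_i)$, so that $E(\ov{e}_i,\ov{e}_j)$ transfers. But full existence over $\acl(\ov{b})$ only controls $\tp(\ov{e}_{i+1}/\acl(\ov{b}))$, not $\tp(\ov{e}_{i+1}/\ov{e}_0\cdots\ov{e}_i)$; stationarity would pin down the latter \emph{if} the original $(\ov{a}_i)$ were already $\ind$-independent over $\acl(\ov{b})$, and nothing in the hypotheses gives that. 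So you cannot conclude $E(\ov{e}_1,\ov{a})$, and the subsequent stationarity step yielding $E(\ov{a}'',\ov{a})$ is unsupported. (Incidentally, since $\ov{a}$ is assumed algebraically closed, the common subtuple $\ov{b}$ is itself closed, so the distinction between $\ov{b}$ and $\acl(\ov{b})$ you worry about at the end is empty.)

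The actual argument uses freedom differently, and essentially. One applies full existence over $\ov{a}$ itself---not over $\ov{b}$---to obtain $\ov{d}\equiv_{\ov{a}}\ov{a}_1$ with $\ov{d}\ind_{\ov{a}}\ov{a}_1$. Because $\ov{d}\equiv_{\ov{a}}\ov{a}_1$ and $E(\ov{a}_1,\ov{a})$, one gets $E(\ov{d},\ov{a})$ immediately, with no EM-type bookkeeping. The price is that independence is over the wrong base $\ov{a}$; this is precisely where freedom enters. From $\ov{d}\cap\ov{a}=\ov{b}=\ov{a}_1\cap\ov{a}$ one has $\ov{a}\cap(\ov{d}\,\ov{a}_1)=\ov{b}$, and freedom drops the base to give $\ov{d}\ind_{\ov{b}}\ov{a}_1$. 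A second application of full existence produces $\ov{b}_0\equiv_{\ov{b}}\ov{a}$ with $\ov{b}_0\ind_{\ov{b}}\ov{a}\,\ov{a}'$, and two applications of stationarity (one over $\ov{a}_1$, one over $\ov{b}_0$) finish the proof, handling $\ov{a}$ and $\ov{a}'$ in a single stroke rather than by a separate ``symmetric construction''. The moral is that freedom is not a device for passing between $\ov{b}$ and $\acl(\ov{b})$; it is the tool for passing from base $\ov{a}$ down to base $\ov{b}$, and that base change is exactly what couples the $E$-relation to the independence relation.
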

\begin{proof} By lemmas 5.2, 5.4 and 5.5 of~\cite{Conant17}.
\end{proof}

We will  apply a minor modification of Fact~\ref{im1} to our theory $T^\ast_\mathrm{Sq}$. For this, we need to define a suitable relation $\ind$.

\begin{defi} Let $(A,R)$ be a partial STS and let $(B,\cdot)$ a Steiner quasigroup. Let $(B,S)$ be the STS corresponding to $(B, \cdot)$. We say that $f:A\rightarrow B$ is a \textbf{homomorphism}  if it is a homomorphism between the relational structures $(A,R)$ and $(B,S)$. Equivalently, $f:A \rightarrow B$ is a homomorphism if, for all $a,b,c\in A$, 
\[ R(a,b,c) \ \Rightarrow f(a)\cdot f(b)= f(c)\, .\]
 A Steiner quasigroup $(B,\cdot)$  is \textbf{freely generated} by $A\subseteq B$ if $\langle A\rangle = B$ and every homomorphism from $(A,R)$  (where $R$ is the restriction to $A$ of the graph of the product on $B$) into some Steiner quasigroup $(C,\cdot)$ can be extended to a homomorphism of $(B,\cdot)$ into $(C,\cdot)$.
\end{defi}

\begin{remark}\label{im2} Every   partial STS $(A,R)$  can be extended to some STS $(B,S)$ whose corresponding Steiner quasigroup $(B,\cdot)$ is freely generated by $A$. Moreover,  $(B,\cdot)$ is unique up to isomorphism over $A$.
\end{remark}
\begin{proof} For $a \in A$, let $c_a$ be a constant symbol and extend the language $L=\{\cdot\}$ to $L(A)=L\cup\{c_a\mid a\in A\}$. Let $K$ be the class of all $L(A)$-structures $(M,\cdot, c_a^M)_{a\in A}$  such that $(M,\cdot)$ is a Steiner quasigroup and the mapping $a\mapsto c_a^M$ defines a homomorphism of $(A,R)$ into $(M,\cdot)$. Since $K$ is closed under substructures, direct products and homomorphic images, it is a variety (in the sense of universal algebra) and therefore it contains a free algebra $(F,\cdot, c_a^F)_{a\in A}$, which is unique up to isomorphism.  As an $L(A)$-structure, $(F,\cdot, c_a^F)_{a\in A}$ is freely generated 
by the empty set.  Since there are structures in $K$ whose corresponding STS restricted to $A$ is $(A,R)$,  the mapping $a\mapsto c_a^F$ defines an isomorphism of partial STSs and we can assume that $a=c_a^F$ and $(A,R)$ is a substructure of the STS associated to $(F,\cdot)$ and $F=\langle A\rangle$. It is easy to see that $(F,\cdot)$ satisfies our requirements.
\end{proof}

\begin{remark}\label{im3} If $(B,\cdot)$ is a Steiner quasigroup, $S$ is the graph of the product on $B$  and $A\subseteq B$, the following are equivalent:
\begin{enumerate}
\item $(B,\cdot)$ is freely generated by $A$;
\item   $(B,S) =\bigcup_{n<\omega}(A_n,R_n)$  for some chain of partial STSs $(A_n,R_n)$  such that:
\begin{enumerate}
\item  $A_0 =A$
\item  $A_{n+1}= \{a\cdot b\mid a,b\in A_n\}$
\item For every $c\in A_{n+1}\smallsetminus A_n$  there is a unique pair $\{a,b\}\subseteq A_n$  such that  $a\cdot b= c$
\item $R_n= S^{A_n}$.
\end{enumerate}
\end{enumerate}
\end{remark}
\begin{proof} Assume $(B,\cdot)$ is as in 2, let $(C,\cdot)$ be a Steiner quasigroup and let $f:A\rightarrow C$ be a homomorphism of $(A,S^A)$ into $(C,\cdot)$. Using the uniqueness condition (c), we can inductively define an ascending chain of homomorphisms $f_n:A_n\rightarrow C$  of  $(A_n,R_n)$ into $(C,\cdot)$ starting with $f_0=f$. Then $\bigcup_{n<\omega} f_n$ is a homomorphism from $(B,S)$ into $(C,\cdot)$ that extends~$f$.

The other direction follows from this and the uniqueness of the freely generated structure.
\end{proof}

\begin{defi} For subsets $A,B,C$ of the monster model $(\c_\mathrm{Sq},\cdot)$ of $T^\ast_\mathrm{Sq}$, define  $A\ind_C B$  if and only if  $\langle AC\rangle \cap \langle BC\rangle =\langle C\rangle$  and  $\langle ABC\rangle$ is freely generated by $\langle AC\rangle \langle BC\rangle$.
\end{defi}

\begin{remark}\label{im4} It is clear that $\ind$ is invariant and  symmetric. Moreover,    $A\ind_C B$ if and only if $\langle AC\rangle \ind_{\langle C\rangle}\langle BC\rangle$.
\end{remark}

In the next lemmas we check that the ternary relation $\ind$ satisfies the remaining properties in Fact~\ref{im1}, with the exception of freedom.  Instead, in Lemma~\ref{im8} we prove  a weak version of freedom which suffices for elimination of hyperimaginaries in our setting and seems more appropriate for a language with function symbols. Recall that in the monster model $(\c_\mathrm{Sq},\cdot)$ of  $T^\ast_\mathrm{Sq}$  we have $\acl(A)=\langle A\rangle$.  Also recall that $\P$ is the graph of the product in $\c_\mathrm{Sq}$. We will say that a set $A$ is \emph{closed} if $A=\langle A\rangle$. In the rest of this section we use elimination of quantifiers for $T^\ast_\mathrm{Sq}$ and Remark~\ref{mc5} without explicit mention.

\begin{lemma}\label{extension} Assume $(A,R)\subseteq (A^\prime,R^\prime)$ and  $(B,S)\subseteq (B^\prime,S^\prime)$ are STSs and $(AB, R\cup S)$ and $(A^\prime B^\prime,R^\prime\cup S^\prime)$ are partial STSs.  Assume additionally that $A\cap B^\prime = A\cap B = A^\prime\cap B$. If $f$ is a homomorphism from the partial STS  $(AB, R\cup S)$ into a Steiner quasigroup $(E,\cdot)$, then there is some Steiner quasigroup $(E^\prime,\cdot)$ extending $(E,\cdot)$ and some homomorphism $f^\prime \supseteq f$ from the partial STS $(A^\prime B^\prime,R^\prime\cup S^\prime)$ into $(E^\prime,\cdot)$.
\end{lemma}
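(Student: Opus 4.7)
The plan is to construct $(E',\cdot)$ in three stages: amalgamate $(E,\cdot)$ with the ``new'' elements in $(A'\setminus A)\cup (B'\setminus B)$ to obtain a candidate partial STS $P$, quotient $P$ by the smallest congruence making its ternary relation single-valued, and then apply Remark~\ref{im2} to extend the resulting partial STS to a Steiner quasigroup $(E',\cdot)$. The map $f'$ is then defined as the composition of the obvious maps $A'\cup B'\to P\to P/{\sim}\hookrightarrow E'$.

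Concretely, assuming without loss of generality that $(A'\cup B')\cap E=\emptyset$, set $P=E\sqcup (A'\setminus A)\sqcup (B'\setminus B)$ and define $\pi\colon A'\cup B'\to P$ by $\pi(x)=f(x)$ for $x\in A\cup B$ and $\pi(x)=x$ for $x\in (A'\setminus A)\cup (B'\setminus B)$. Equip $P$ with the symmetric ternary relation $T$ obtained as the union of $\P^E$ and of all triples $(\pi(x),\pi(y),\pi(z))$ with $(x,y,z)\in R'\cup S'$, closed under permutations of coordinates. When $f$ is injective, a direct case analysis using the hypothesis $A\cap B'=A\cap B=A'\cap B$ together with the fact that $(A'B',R'\cup S')$ is a partial STS shows that $(P,T)$ is already a partial STS and that $(E,\P^E)$ sits inside it as a sub-partial STS. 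In general, non-injectivity of $f$ may force $T$ to record two different ``products'' for the same pair $\{u,v\}$, so we let $\sim$ be the smallest equivalence on $P$ that identifies any two such third coordinates and is a congruence for $T$; then $(P/{\sim},T/{\sim})$ is a partial STS, we complete it to a Steiner quasigroup $(E',\cdot)$ via Remark~\ref{im2}, and take $f'=A'\cup B'\xrightarrow{\pi}P\twoheadrightarrow P/{\sim}\hookrightarrow E'$.

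The main obstacle is verifying that the natural map $E\to P/{\sim}$ is injective, so that $E$ genuinely embeds into $E'$. The crucial observation is that whenever $(u,v,w)\in T$ with $w\in E$, one must have either $u,v\in E$, or $u,v\in A'\setminus A$, or $u,v\in B'\setminus B$: the third coordinate can lie in $E$ only when it equals $f(z)$ for some $z\in A\cup B$, and since $A$ and $B$ are subquasigroups of $A'$ and $B'$, the block structure of $R'$ and $S'$ forces the other two coordinates to stay in the same region. Using this trichotomy, we argue by induction on the stages of $\sim$ that $[e]_\sim=\{e\}$ for every $e\in E$: a one-step identification with both third coordinates in $E$ is trivial because $f$ is a homomorphism; and a congruence step producing $w_1\sim w_2$ with $w_1,w_2\in E$ is resolved either by the inductive hypothesis (when $u_i,v_i\in E$) or by combining the cancellation law of Remark~\ref{tpt1} with the partial-STS compatibility of $(A'B',R'\cup S')$ and the hypothesis $A\cap B'=A\cap B=A'\cap B$ (when the $u_i,v_i$ lie in a common new piece, the latter forcing the corresponding products in $A'$ and $B'$ to coincide in $A\cap B$). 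Once $E\hookrightarrow E'$ is established, the remaining properties of $f'$ are immediate: $f'\supseteq f$ because $\pi|_{A\cup B}=f$, and $f'$ is a partial-STS homomorphism because every triple in $R'\cup S'$ becomes, via $\pi$ and the quotient, a product relation in $E'$.
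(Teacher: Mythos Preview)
Your approach differs substantially from the paper's: the paper never quotients. It picks fresh pairwise disjoint sets $U,V,W$ (with $W$ a copy of $(A'\cap B')\smallsetminus(A\cap B)$), bijections $h_1:A'\smallsetminus A\to UW$ and $h_2:B'\smallsetminus B\to VW$ agreeing on the overlap, and then defines $f'=f\cup h_1\cup h_2$ into a partial STS built on $E\cup U\cup V\cup W$, which is finally completed to a Steiner quasigroup. Because the new elements are sent bijectively to fresh points, the only place non-injectivity of $f$ matters is inside $f(A\cup B)\subseteq E$, and compatibility there is immediate. No quotient or inductive tracking of identifications is needed.

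Your quotient route has a genuine gap at exactly the step you flag as ``the main obstacle''. The inductive argument for $[e]_\sim=\{e\}$ breaks down in the congruence case. Suppose $u_1\sim u_2$ and $v_1\sim v_2$ are nontrivial identifications \emph{within} $A'\smallsetminus A$ produced at an earlier stage (this really happens: if $f(a_1)=f(a_2)=e$ for distinct $a_1,a_2\in A$ and $x\in A'\smallsetminus A$, then $(e,x,a_1\cdot x)$ and $(e,x,a_2\cdot x)$ both lie in $T$, forcing $a_1\cdot x\sim a_2\cdot x$). If now $u_1\cdot_{A'}v_1$ and $u_2\cdot_{A'}v_2$ both land in $A$, you must show $f(u_1\cdot v_1)=f(u_2\cdot v_2)$ in $E$. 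Your appeal to cancellation and to the compatibility of $R'\cup S'$ on $A'B'$ does nothing here: $u_1$ and $u_2$ are \emph{distinct} elements of $A'$, related only by the congruence you are building, so neither the partial-STS hypothesis nor the intersection hypothesis $A\cap B'=A\cap B=A'\cap B$ says anything about their products. To push this through you would need a homomorphism from $(P,T)$ to some Steiner quasigroup that restricts to the identity on $E$ --- but exhibiting such a homomorphism is precisely the content of the lemma, so the argument becomes circular. (A minor additional point: $(A'\smallsetminus A)\cap(B'\smallsetminus B)=(A'\cap B')\smallsetminus(A\cap B)$ can be nonempty, so your disjoint union $P=E\sqcup(A'\smallsetminus A)\sqcup(B'\smallsetminus B)$ is ambiguous; the paper handles this overlap explicitly via the set $W$.)

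The fix is to do what the paper does: rather than quotient, send $A'\smallsetminus A$ and $B'\smallsetminus B$ bijectively onto fresh sets and check that the induced ternary relation on $E\cup(\text{fresh sets})$ is a partial STS compatible with $\P^E$. This sidesteps the injectivity-of-the-quotient problem entirely.
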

\begin{proof}  Choose pairwise disjoint sets $U,V,W$  that are disjoint from $E$ and such that $|W|=|(A^\prime\cap B^\prime) \smallsetminus (A\cap B)|$, $|U|= |A^\prime\smallsetminus AB^\prime|$  and   $|V|= |B^\prime\smallsetminus A^\prime B|$, choose a bijection $h:  (A^\prime\cap B^\prime)\smallsetminus (A\cap B)\rightarrow W$  and extend $h$ to bijections $h_1: A^\prime\smallsetminus A\rightarrow UW$ and $h_2:B^\prime\smallsetminus B\rightarrow VW$. Let $P$ be the graph of the product on $E$. Define a ternary relation $R_f$ on  $f(A)UW$ by adding to  $P^{f(A)}\cup h_1(R^\prime\restriction (A^\prime\smallsetminus A))$  all  triples of the form  $(h_1(a),h_1(b),f(a\cdot b))$  with $a,b\in A^\prime\smallsetminus A$ and  $a\cdot b\in A$  as well as its permutations and all triples of the form $(a,a,a)$. Similarly, define $S_f$ on  $f(B)VW$ by adding to $P^{f(B)}\cup h_2(S^\prime\restriction (B^\prime\smallsetminus B))$  all triples of the form $(h_2(a),h_2(b),f(a\cdot b))$  where $a,b\in B^\prime\smallsetminus B$ and $a\cdot b\in B$ and their  permutations, as well as identities $(a,a,a)$.  Now,  $(f(A)UW,R_f)$ and $(f(B) VW,S_f)$ are STSs and  
$$ (f\restriction A)\cup  h_1 : (A^\prime, R^\prime) \rightarrow (f(A)UW,R_f) \ \mbox{ and } (f\restriction B) \cup  h_2 : (B^\prime, R^\prime) \rightarrow (f(B) VW,S_f)\ $$
are homomorphisms. The STSs $(f(A)UW,R_f)$ and $(f(B) VW,S_f)$ are compatible on their intersection $(f(A) \cap f(B))W$ and hence $(f(AB)UVW, R_f\cup S_f)$ is a partial STS, and  moreover  
$$f^\prime =f\cup h_1\cup h_2: A^\prime B^\prime \rightarrow f(AB)UVW$$
 is a homomorphism.  Extend the partial STS  $(EUVW,  P\cup R_f\cup S_f)$  to an  STS, and let  $(E^\prime,\cdot)$ be its associated Steiner quasigroup. Then $(E,\cdot)\subseteq (E^\prime,\cdot)$  and  $f^\prime: A^\prime B^\prime \rightarrow E^\prime$ is a homomorphism extending $f$.
\end{proof}

\begin{lemma}\label{im5} $\ind$ is monotone.
\end{lemma}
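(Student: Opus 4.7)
The plan is to prove monotonicity in the first argument only; the full statement then follows by applying it twice with the symmetry recorded in Remark~\ref{im4}. So I assume $A \ind_C B$ and $A_0 \subseteq A$, and aim to show $A_0 \ind_C B$. By Remark~\ref{im4} I may replace $A, B, C, A_0$ by $\langle AC\rangle, \langle BC\rangle, \langle C\rangle, \langle A_0 C\rangle$ respectively, so that all four sets are closed and $C \subseteq A_0 \subseteq A$, $C \subseteq B$. The hypothesis then reads $A \cap B = C$ together with $\langle AB\rangle$ being freely generated by $A \cup B$. The intersection requirement $A_0 \cap B = C$ is immediate from $C \subseteq A_0 \cap B \subseteq A \cap B = C$, so only the free-generation clause remains.

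To show that $\langle A_0 B\rangle$ is freely generated by $A_0 \cup B$, take an arbitrary homomorphism $f \colon A_0 \cup B \to D$ into a Steiner quasigroup $D$, where $A_0 \cup B$ carries the partial STS structure inherited from $\c_\mathrm{Sq}$. I will apply Lemma~\ref{extension} with the substitution $A \leftarrow A_0$, $A' \leftarrow A$, $B \leftarrow B$, $B' \leftarrow B$: the required equality $A_0 \cap B = A \cap B = C$ holds by the reduction, and both unions are automatically partial STSs since the product graph on $\c_\mathrm{Sq}$ is a function. This yields a Steiner quasigroup $D' \supseteq D$ and a homomorphism $f' \colon A \cup B \to D'$ extending $f$. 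Since $\langle AB\rangle$ is freely generated by $A \cup B$, $f'$ extends further to a homomorphism $\tilde f \colon \langle AB\rangle \to D'$, and I restrict $\tilde f$ to $\langle A_0 B\rangle \subseteq \langle AB\rangle$.

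The crux is that this restriction actually lands back inside $D$, not merely in $D'$. Since $\tilde f$ is a homomorphism, $\tilde f(\langle A_0 B\rangle)$ is contained in the subquasigroup of $D'$ generated by $\tilde f(A_0 \cup B) = f(A_0 \cup B) \subseteq D$; and because $D$ is a subquasigroup of $D'$, this generated subquasigroup lies entirely inside $D$. Thus $\tilde f|_{\langle A_0 B\rangle} \colon \langle A_0 B\rangle \to D$ is the required extension of $f$. I expect this last observation to be the main subtlety of the argument: Lemma~\ref{extension} only promises an extension into an \emph{enlarged} quasigroup $D'$, and it is precisely the closure of $D$ under the operation inside $D'$ that lets the universal property of free generation descend from $A \cup B$ to $A_0 \cup B$.
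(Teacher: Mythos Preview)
Your proof is correct and follows essentially the same route as the paper's: reduce to closed sets, use Lemma~\ref{extension} to enlarge the target quasigroup and extend the homomorphism to $A\cup B$, invoke free generation of $\langle AB\rangle$, and then observe that the restriction to $\langle A_0B\rangle$ lands back in $D$ because $D$ is a subquasigroup of $D'$. The only cosmetic difference is that the paper shrinks the second argument ($B_0\subseteq B$) rather than the first, and you are slightly more explicit than the paper about why the image falls inside $D$.
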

\begin{proof} Since $\ind$ is symmetric,   it is enough to prove that $A\ind_C B$  implies $A\ind_C B_0$  for every $B_0\subseteq B$.  Moreover we may assume that $C\subseteq A\cap B_0$ and  $A,B,C,B_0$ are closed.  It is clear that $\langle AC\rangle \cap \langle B_0 C\rangle =\langle C\rangle$.  We check that $\langle AB_0\rangle$ is freely generated from $AB_0$. Let $R= \P^{AB_0}$ and let $f: AB_0\rightarrow D$ be a homomorphism of $(AB_0,R)$ to a Steiner quasigroup $(D,\cdot)$,  which  can be assumed to be  a substructure of $(\c_\mathrm{Sq},\cdot)$. We want to extend $f$ to some homomorphism from $(\langle AB_0\rangle,\cdot)$  to $(D,\cdot)$.  By Lemma~\ref{extension}, there is some Steiner quasigroup $(D^\prime,\cdot)\supseteq (D,\cdot)$ and some homomorphism
$f^\prime\supseteq f$ from the partial STS $(AB, \P^{AB})$ into $(D^\prime,\cdot)$.   Since we are assuming that $\langle AB\rangle$ is freely generated from $AB$, $f^\prime$ extends to some homomorphism $g$ from $(\langle AB\rangle, \cdot)$  into  $(D^\prime,\cdot)$. But $g(\langle AB_0\rangle)\subseteq D$ and so $g\restriction \langle A B_0\rangle$ is a homomorphism from $(\langle AB_0\rangle,\cdot)$ to $(D,\cdot)$  extending $f$, as required.
\end{proof}

\begin{lemma}\label{im6} $\ind$ has the full existence property over any set.
\end{lemma}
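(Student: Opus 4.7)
The plan is to realise $A'$ by building the free Steiner quasigroup on a disjoint amalgam of a copy of $A$ with $B$ over $C$, and then embedding that free extension into $\c_\mathrm{Sq}$ over $B$. First I would use Remark~\ref{im4} to reduce to the case in which $A$, $B$ and $C$ are closed and $C\subseteq A\cap B$, since both sides of the required independence are invariant under closure, and any realisation of $\tp(A/C)$ in this reduced form restricts to a realisation of the type of the original~$A$.

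Next I would pick an abstract set $A^\ast$ together with a bijection $\alpha:A\to A^\ast$ that is the identity on $C$ and whose fresh part $A^\ast\smallsetminus C$ is disjoint from $\c_\mathrm{Sq}$, and transport the product from $A$ along $\alpha$; this makes $A^\ast\cap B=C$ and ensures that the product graphs on $A^\ast$ and $B$ agree on their common intersection $C$. By Lemma~\ref{mc1} the union $(P,R)=(A^\ast\cup B,\,\P^{A^\ast}\cup\P^B)$ is a partial STS containing $A^\ast$ and $B$ as substructures. Applying Remark~\ref{im2} to $(P,R)$ yields a Steiner quasigroup $(F,\cdot)$ extending $(P,R)$ and freely generated by $P$, and Remark~\ref{mc5}(ii) then supplies an embedding $j:F\to\c_\mathrm{Sq}$ which is the identity on $B$.

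The candidate is $A':=j(A^\ast)$. The verification is routine: $A'\equiv_C A$ via $j\circ\alpha$; since $A'$ and $B$ are closed and $A'\cap B=j(A^\ast\cap B)=C$, we get $\langle A'C\rangle\cap\langle BC\rangle=A'\cap B=\langle C\rangle$; and $\langle A'BC\rangle=\langle A'B\rangle=j(F)$ is isomorphic to $F$ through $j$, hence freely generated by $A'\cup B=\langle A'C\rangle\cup\langle BC\rangle$. The one delicate point is that the embedding $j$ must be taken over the whole of $B$, not just over $C$; this is why one builds the free extension $F$ first and then invokes Remark~\ref{mc5}(ii), rather than trying to embed $(P,R)$ directly: inside $F$ the set $B$ is automatically a substructure in the full Steiner quasigroup sense, which is exactly what is needed for Remark~\ref{mc5}(ii) to produce an embedding over $B$.
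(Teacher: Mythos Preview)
Your argument is correct and follows the same overall strategy as the paper: obtain a copy of $A$ meeting $B$ exactly in $C$, form the partial STS on the union, pass to the free Steiner quasigroup on it, and embed the result in $\c_\mathrm{Sq}$ over $B$ via Remark~\ref{mc5}(ii). The one noteworthy difference is in the first step: the paper stays inside $\c_\mathrm{Sq}$ and invokes P.~M.~Neumann's Separation Lemma (together with the fact that $\acl(C)=\langle C\rangle=C$) to produce $A'\equiv_C A$ with $A'\cap B=C$, whereas you sidestep this by taking an abstract copy $A^\ast$ whose fresh part lies outside $\c_\mathrm{Sq}$. Your route is slightly more self-contained, avoiding an external citation; the paper's route has the minor advantage that the intermediate object $A'$ already lives in the monster model. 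Either way the remainder of the proof is identical.
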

\begin{proof}   Assume that $A,B,C$  are subsets of $\c_\mathrm{Sq}$. We want to find some $A^\prime\equiv_C A$ such that  $A^\prime\ind_C B$. Without loss of generality $A$, $B$ and $C$ are closed (so in particular $C$ satisfies the hypotheses of 3(c) in Fact~\ref{im1})  and $C\subseteq A\cap B$. 
If $A = C$, then $A$ is algebraic over $C$ and we can take $A^\prime = A$. If $A\neq C$, then no element of $A$ is algebraic over $C$. Let $\overline{a}$ enumerate $A \setminus C$. By P.M. Neumann's Separation Lemma  (\cite{PJCpermutation}, Theorem~6.2), for every finite subtuple $\overline{a}_0$ of $\overline{a}$ there is $\overline{a}_0^\prime \equiv_C \overline{a}_0$ such that $\overline{a_0}^\prime \cap B = \emptyset$. It follows that there is $A^\prime\equiv_C A$  such that  $A^\prime\cap B= C$. By Remark~\ref{im2}, the partial STS $(A^\prime B,\P^{A^\prime B})$ can be extended to a Steiner quasigroup $(D,\cdot)$ which is freely generated by $A^\prime B$.  There is an embedding $f$  of $(D,\cdot)$ in $(\c_\mathrm{Sq},\cdot)$ over $B$.  If $A^{\prime\prime}= f(A^\prime)$, then $A^{\prime\prime}\equiv_B A^\prime$ and hence $A^{\prime\prime}\equiv_C A$. Since $f(D)$ is freely generated by $A^{\prime\prime} B$ and $A^{\prime\prime}\cap B=C$, we have $A^{\prime\prime}\ind_C B$.
\end{proof}

\begin{lemma}\label{im7}  $\ind$ has the stationarity property.
\end{lemma}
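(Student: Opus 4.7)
The plan is to construct an isomorphism between $\langle AB\rangle$ and $\langle A'B\rangle$ that is the identity on $B$ and carries $A$ onto $A'$ respecting the enumerations witnessing $A\equiv_C A'$; quantifier elimination will then deliver $A\equiv_B A'$.

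First, note that $A, A', B, C$ are closed, so $\langle AC\rangle = A$, $\langle A'C\rangle = A'$, $\langle BC\rangle = B$ (using $C\subseteq A\cap B$ and, from $A\equiv_C A'$, also $C\subseteq A'$). The hypotheses $A\ind_C B$ and $A'\ind_C B$ therefore translate into: $A\cap B = C = A'\cap B$, the Steiner quasigroup $\langle AB\rangle$ is freely generated by $A\cup B$ with its induced partial STS $(A\cup B,\P^{A\cup B})$, and analogously $\langle A'B\rangle$ is freely generated by $A'\cup B$ with $(A'\cup B,\P^{A'\cup B})$. Fix an isomorphism $\sigma:A\to A'$ of closed substructures that is the identity on $C$ and respects the chosen enumerations, and define
\[
\tau:A\cup B\to A'\cup B,\qquad \tau\restriction A=\sigma,\ \tau\restriction B=\mathrm{id}_B.
\]
This is well-defined on the overlap $A\cap B=C$ because $\sigma\restriction C=\mathrm{id}_C$, and it is clearly a bijection onto $A'\cup B$.

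The key step is to verify that $\tau$ is an isomorphism of the partial STSs $(A\cup B,\P^{A\cup B})$ and $(A'\cup B,\P^{A'\cup B})$. For this, I will argue that the only triples in $\P^{A\cup B}$ are those entirely inside $A$ or entirely inside $B$. Suppose $a\in A\smallsetminus C$ and $b\in B\smallsetminus C$ have a product $a\cdot b\in A\cup B$. If $a\cdot b\in A$, then, since $A$ is closed, $b=a\cdot(a\cdot b)\in A$, forcing $b\in A\cap B=C$, contradiction; if $a\cdot b\in B$ the symmetric argument gives $a\in C$, contradiction. The same argument in $A'\cup B$ shows products of elements in $A'\smallsetminus C$ and $B\smallsetminus C$ do not land in $A'\cup B$. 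Hence $\P^{A\cup B}=\P^A\cup\P^B$ and similarly on the primed side, and $\tau$ transports one to the other because $\sigma$ is an isomorphism of $A$ onto $A'$ (covering $\P^A\to\P^{A'}$) and $\mathrm{id}_B$ handles $\P^B$; note that products of elements of $C$, computed inside either $A$ or $B$, agree because $C$ is a common substructure.

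By Remark~\ref{im2}, the Steiner quasigroup freely generated by a partial STS is unique up to isomorphism over that partial STS. Since $\langle AB\rangle$ and $\langle A'B\rangle$ are both freely generated, by $(A\cup B,\P^{A\cup B})$ and $(A'\cup B,\P^{A'\cup B})$ respectively, the partial STS isomorphism $\tau$ extends to an isomorphism $\tilde\tau:\langle AB\rangle\to\langle A'B\rangle$ of Steiner quasigroups. This $\tilde\tau$ is the identity on $B$ and sends the enumeration of $A$ to that of $A'$, so by quantifier elimination for $T^\ast_\mathrm{Sq}$ we obtain $A\equiv_B A'$, as required. The main technical point is the product-landing argument above; all else is bookkeeping using the hypotheses and Remark~\ref{im2}.
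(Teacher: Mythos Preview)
Your proof is correct and follows essentially the same approach as the paper's: build $\tau=\sigma\cup\mathrm{id}_B$ as an isomorphism of partial STSs on $A\cup B$ and $A'\cup B$, and then invoke the uniqueness of the freely generated Steiner quasigroup (Remark~\ref{im2}) to extend it to an isomorphism of $\langle AB\rangle$ onto $\langle A'B\rangle$. The paper simply asserts that $f\cup\mathrm{id}_B$ is an isomorphism of partial STSs, whereas you supply the verification that $\P^{A\cup B}=\P^A\cup\P^B$ via the product-landing argument; this extra detail is sound and fills in exactly the step the paper leaves implicit.
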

\begin{proof} Assume   $A,A^\prime,B,C$ are closed, $A\equiv_C A^\prime$, $C\subseteq A\cap B$, $A\ind_C B$ and $A^\prime\ind_C B$. We check that $A\equiv_B A^\prime$.
By hypothesis   $\langle AB\rangle$ is freely generated from $AB$,  and $\langle A^\prime B\rangle$ is freely generated from $A^\prime B$.  Fix some isomorphism of  STSs $f:A\rightarrow A^\prime$ over $C$, let $id_B$ be the identity mapping on $B$ and notice that  $f\cup id_B: AB\rightarrow A^\prime B$ is an isomorphism of partial STSs.  By the uniqueness of freely generated Steiner quasigroups, $f\cup id_B$  extends to some isomorphism of Steiner quasigroups $g: \langle AB\rangle \rightarrow \langle A^\prime B\rangle$ that witnesses $A\equiv_B A^\prime$.
\end{proof}

\begin{lemma}\label{im8} $\ind$ satisfies the following weak version of the freedom property: if $A,B,C,D$  are closed, $C\cap(\langle AD\rangle \langle BD\rangle) \subseteq D\subseteq C$ and $A\ind_C B$, then  $A\ind_DB$.
\end{lemma}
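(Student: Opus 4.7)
The goal is to verify the two clauses of $A\ind_D B$, namely that $\langle AD\rangle \cap \langle BD\rangle = D$ and that $\langle ABD\rangle$ is freely generated by $\langle AD\rangle \cup \langle BD\rangle$. My plan is to use the special hypothesis only to control intersections, then to bootstrap up to $\langle AC\rangle \cup \langle BC\rangle$ via Lemma~\ref{extension} and back down from $\langle ABC\rangle$ using the free generation built into $A\ind_C B$.

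For the intersection clause, I would chain the two hypotheses directly. Any $x \in \langle AD\rangle \cap \langle BD\rangle$ lies in $\langle AC\rangle \cap \langle BC\rangle = C$ (by $A\ind_C B$ and $D\subseteq C$) and also in $\langle AD\rangle \langle BD\rangle$, so $x \in C \cap (\langle AD\rangle \langle BD\rangle) \subseteq D$; the reverse inclusion is trivial since $D$ is closed. The same computation, using $\langle AD\rangle \subseteq \langle AD\rangle \langle BD\rangle$ and symmetrically, also yields the auxiliary identities $\langle AD\rangle \cap \langle BC\rangle = D = \langle AC\rangle \cap \langle BD\rangle$. Together these say that the four closed sets $\langle AD\rangle, \langle AC\rangle, \langle BD\rangle, \langle BC\rangle$ cross-intersect in $D$, which is exactly the ``trivial intersection'' hypothesis required by Lemma~\ref{extension}.

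For the free-generation clause, I would verify the universal property directly. Let $f : (\langle AD\rangle \cup \langle BD\rangle, \P) \to (M, \cdot)$ be a homomorphism of partial STSs into an arbitrary Steiner quasigroup. By the intersection identities above, Lemma~\ref{extension} applies with $(A, A', B, B') = (\langle AD\rangle, \langle AC\rangle, \langle BD\rangle, \langle BC\rangle)$, producing a Steiner quasigroup $(M', \cdot) \supseteq (M, \cdot)$ and a homomorphism $f' : \langle AC\rangle \cup \langle BC\rangle \to M'$ extending $f$. Since $A \ind_C B$ tells us that $\langle ABC\rangle$ is freely generated by $\langle AC\rangle \cup \langle BC\rangle$, its universal property extends $f'$ further to a homomorphism $g : \langle ABC\rangle \to M'$. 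Restricting $g$ to $\langle ABD\rangle \subseteq \langle ABC\rangle$, the values on the generating set $A \cup B \cup D$ land in $M$ by construction, and since $M$ is closed under the product a routine induction on term complexity places all of $g(\langle ABD\rangle)$ in $M$. The resulting $g\restriction \langle ABD\rangle : \langle ABD\rangle \to M$ is therefore a homomorphism extending $f$, establishing free generation.

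The main obstacle is recognising the correct role of the rather specific hypothesis $C \cap (\langle AD\rangle \langle BD\rangle) \subseteq D$: it is tailored exactly to force the four cross-intersections of $\langle AD\rangle, \langle AC\rangle, \langle BD\rangle, \langle BC\rangle$ to collapse to $D$, and without these collapses the hypothesis of Lemma~\ref{extension} fails and the whole bootstrap breaks. Once the intersection step is isolated, the remainder of the argument is a clean assembly of already-proved facts, namely Lemma~\ref{extension} and the free generation half of the definition of $A\ind_C B$.
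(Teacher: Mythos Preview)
Your proof is correct and follows exactly the same strategy as the paper's: verify the intersection clause directly from the hypothesis, invoke Lemma~\ref{extension} to extend a given homomorphism from $\langle AD\rangle\langle BD\rangle$ to $\langle AC\rangle\langle BC\rangle$, use the free-generation half of $A\ind_C B$ to extend further to all of $\langle ABC\rangle$, and then restrict back to $\langle ABD\rangle$. If anything, your version is more careful than the paper's in explicitly computing the cross-intersections $\langle AD\rangle\cap\langle BC\rangle = \langle AC\rangle\cap\langle BD\rangle = D$ needed for the hypothesis of Lemma~\ref{extension}, which the paper leaves implicit.
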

\begin{proof}  The assumption $A\ind_C B$ implies that $\langle AC\rangle \cap \langle BC\rangle = C$ and that $\langle ABC\rangle $ is freely generated by $\langle AC\rangle \langle BC\rangle$.  Notice that $\langle AD\rangle\cap \langle BD\rangle = D$.  We check that $\langle ABD\rangle$  is freely generated by $\langle AD\rangle\langle BD\rangle$. Let $f:\langle AD\rangle \langle BD\rangle\rightarrow E$ be a homomorphism of the partial STS $(\langle AD\rangle \langle BD\rangle,\P^{\langle AD\rangle \langle BD\rangle})$ to the Steiner quasigroup $(E,\cdot)$ and let us check that $f$ can be extended to $\langle ABD\rangle$.    By Lemma~\ref{extension} there is a Steiner quasigroup $(E^\prime,\cdot)\supseteq (E,\cdot)$ and a homomorphism $f^\prime\supseteq f$ from $(\langle AC\rangle\langle BC\rangle , \P^{\langle AC\rangle\langle BC\rangle})$ to $(E^\prime,\cdot)$.  Since $\langle ABC\rangle $ is freely generated by $\langle AC\rangle \langle BC\rangle$, we can extend $f^\prime$ to a homomorphism $g: \langle ABC\rangle \rightarrow E^\prime$.  Since  $g(ABD)=f(ABD)\subseteq E$, it follows that $g(\langle ABD\rangle)\subseteq E$ and therefore $g\restriction \langle ABD\rangle$ is a homomorphism to $(E,\cdot)$, as required.
\end{proof}

We are now ready to prove that $T^\ast_\mathrm{Sq}$ has elimination of imaginaries. We use parts~1 and~2 of Fact~\ref{im1}, and a version of part~3 where freedom is replaced by the property in Lemma~\ref{im8}. Moreover, we should remove the requirement that $\ov{a}$ should enumerate a closed set and instead deal with the general case. Since our assumptions are slightly different from those in Conant's original result (\cite{Conant17}, Lemma~5.5), we repeat the proof and adapt it to our setting.

\begin{prop} $T^\ast_\mathrm{Sq}$ has elimination of hyperimaginaries and weak elimination of imaginaries.
\end{prop}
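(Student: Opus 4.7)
The plan is to assemble the elimination of each hyperimaginary $\overline{a}_E$ from three ingredients: Fact~\ref{im1}(1) provides a minimal $\overline{b}\in\Sigma(\overline{a},E)$, Fact~\ref{im1}(2) tells us $\overline{b}\in\bdd(\overline{a}_E)$, and a variant of Fact~\ref{im1}(3) suited to our setting delivers $\overline{a}_E\in\dcl(\overline{b})$. The standard criterion (Lemma~18.6 of~\cite{Casanovas11}) then yields elimination of $\overline{a}_E$. For weak elimination of imaginaries, when $\overline{a}$ is finite, note that $\overline{b}$ is a subtuple of $\overline{a}$ and hence finite, while $\overline{b}\in\bdd(\overline{a}_E)$ entails $\overline{b}\in\acl(\overline{a}_E)$ because $\overline{b}$ is real.

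The main work is to prove the variant of Fact~\ref{im1}(3) with two changes: the full-freedom hypothesis is replaced by Lemma~\ref{im8}, and the closedness of $\overline{a}$ is dropped. For the second change I would work internally with the closed tuples $\langle\overline{a}\rangle$ and $\langle\overline{b}\rangle$, available by Corollary~\ref{ac3}; this causes no loss because the paper's identity $\acl=\dcl$ in $T^\ast_\mathrm{Sq}$ gives $\dcl(\overline{b})=\langle\overline{b}\rangle$, so the conclusions $\overline{a}_E\in\dcl(\langle\overline{b}\rangle)$ and $\overline{a}_E\in\dcl(\overline{b})$ are equivalent. The argument then follows Conant's schema: given $\overline{a}'\equiv_{\overline{b}}\overline{a}$, with the aim of showing $\overline{a}'E\overline{a}$, use full existence (Lemma~\ref{im6}) to obtain $\overline{a}''\equiv_{\langle\overline{b}\rangle}\overline{a}$ with $\overline{a}''\ind_{\langle\overline{b}\rangle}\overline{a}'$; extract an indiscernible sequence based on $\langle\overline{b}\rangle$ that contains $\overline{a}''$ and whose terms are pairwise $E$-equivalent (this is where minimality of $\overline{b}$ in $\Sigma(\overline{a},E)$ is used, together with the existence of a witnessing sequence); and finally invoke stationarity (Lemma~\ref{im7}) over $\langle\overline{b}\rangle$ to conclude $\overline{a}''\equiv_{\langle\overline{b}\rangle \overline{a}'}\overline{a}'$, which combined with $\overline{a}''E\overline{a}'$ forces $\overline{a}E\overline{a}'$.

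The main technical hurdle I expect is legitimising each use of freedom in Conant's proof under the weaker hypothesis of Lemma~\ref{im8}: at every point where Conant descends from a working base $C$ to a smaller base $D$, one must verify $C\cap(\langle AD\rangle\langle BD\rangle)\subseteq D$. Exploiting the unique-pair structure of freely generated Steiner quasigroups described in Remark~\ref{im3}, the products introduced by the full-existence step can be shown not to leak back into the ambient base beyond $\langle\overline{b}\rangle$, so the intersection is absorbed into $\langle\overline{b}\rangle$ and the descent is justified. This bookkeeping, tracking the freely generated layers against the algebraic closures of the constructed tuples, is where the combinatorics of Steiner quasigroups genuinely enters the argument and will be the most delicate part of the write-up.
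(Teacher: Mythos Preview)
Your overall framework is right --- use Fact~\ref{im1}(1),(2) together with a modified Fact~\ref{im1}(3), then cite the standard criterion --- and this matches the paper. But your sketch of the modified part~(3) has several concrete problems.

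First, your stationarity step cannot be right as written: concluding $\overline{a}''\equiv_{\langle\overline{b}\rangle\overline{a}'}\overline{a}'$ would force $\overline{a}''=\overline{a}'$, since $\overline{a}'$ is among the parameters. More importantly, to apply stationarity over $\langle\overline{b}\rangle$ with right-hand side $\overline{a}'$ you would need a second tuple which is also $\ind_{\langle\overline{b}\rangle}\,\overline{a}'$, and you never produce one. The paper's argument is different in shape: fix an automorphism $f$ over $\overline{c}$ and use the \emph{existing} witnessing sequence $(\overline{a}_i)$ (no new indiscernible is extracted). One first applies full existence over $\overline{a}$ to get $\overline{b}'\equiv_{\overline{a}}\overline{a}_1$ with $\overline{b}'\ind_{\overline{a}}\overline{a}_1$, then uses weak freedom exactly once to drop the base from $\overline{a}$ to $\overline{c}$, then applies full existence again over $\overline{c}$ to get $\overline{b}_0\ind_{\overline{c}}\overline{a}\,f(\overline{a})$, and finally uses stationarity twice to transfer $E$-equivalence through $\overline{b}_0$ to $f(\overline{a})$.

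Second, minimality of $\overline{b}$ plays no role in the $\dcl$ direction; the paper shows $\overline{a}_E\in\dcl(\overline{c})$ for \emph{every} $\overline{c}\in\Sigma(\overline{a},E)$. Minimality is used only for $\overline{b}\in\bdd(\overline{a}_E)$ via Fact~\ref{im1}(2).

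Third, verifying the hypothesis of Lemma~\ref{im8} is far easier than you expect. With $A=\overline{b}'$, $B=\overline{a}_1$, $C=\overline{a}$, $D=\overline{c}$, all four sets are closed and $\overline{c}\subseteq\overline{b}'\cap\overline{a}_1$, so $\langle AD\rangle\langle BD\rangle=\overline{b}'\overline{a}_1$ and the condition reduces to the set-theoretic fact $\overline{a}\cap(\overline{b}'\overline{a}_1)=\overline{c}$, which follows from $\overline{b}'\cap\overline{a}=\overline{a}_1\cap\overline{a}=\overline{c}$. No appeal to Remark~\ref{im3} is needed.

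Finally, for the non-closed case the paper does not pass to $\langle\overline{a}\rangle$ and $\langle\overline{b}\rangle$ directly (the common intersection of $\langle\overline{a}_i\rangle$ need not equal $\langle\overline{b}\rangle$). Instead it enumerates $\langle\overline{a}\rangle$ as $\overline{a}\overline{a}'$, defines $E'(\overline{x}\overline{x}';\overline{y}\overline{y}')\leftrightarrow E(\overline{x},\overline{y})$, and observes that $\overline{a}_E$ and $(\overline{a}\overline{a}')_{E'}$ are interdefinable, thereby reducing to the closed case.
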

\begin{proof} 
Let $\ov{a}_E$ be a hyperimaginary and let $\ov{b}$ be a minimal tuple in $\Sigma(\ov{a},E)$. Part~2~of Fact~\ref{im1} gives that $\ov{b}\in \bdd(\ov{a}_E)$. So it suffices to check that $\ov{a}_E\in \dcl(\ov{b})$. In fact, this holds for any element of $\Sigma(\ov{a},E)$.

Suppose that $\ov{a}$ is closed  and let  $\ov{c}\in\Sigma(\ov{a},E)$. Let $f$ be an automorphism of the monster model fixing $\ov{c}$ and let us check that  $E(\ov{a},f(\ov{a}))$. This will show that $f(\ov{a}_E)= \ov{a}_E$ and hence that $\ov{a}_E\in\dcl(\ov{c})$. By definition of $\Sigma(\ov{a},E)$, there is an indiscernible sequence $I=(\ov{a}_i\mid i<\omega)$ with $\ov{a}=\ov{a}_0$, with common intersection $\ov{c}$  and such that $E(\ov{a}_i,\ov{a}_j)$  for all $i,j$. It follows that $\ov{c}$ is closed and $I$ is $\ov{c}$-indiscernible. By Lemma~\ref{im6} (full existence) there is some $\ov{b}$ such that $\ov{b}\equiv_{\ov{a}}\ov{a}_1$  and  $\ov{b}\ind_{\ov{a}}\ov{a}_1$.  Notice that $E(\ov{b},\ov{a})$ and hence $E(\ov{b},\ov{a}_1)$.  Since  $\ov{b}\ov{a}\equiv_{\ov{c}}\ov{a}_1\ov{a}$, we have $\ov{b}\cap\ov{a}= \ov{c}=\ov{a}\cap\ov{a}_1$. By symmetry $\ov{a}_1\ind_{\ov{a}} \ov{b}$.
Since $\ov{a}\cap (\ov{b}\ov{a}_1) = \ov{c}\subseteq \ov{a}$ and $\ov{b}\ind_{\ov{a}}\ov{a}_1$, we may apply Lemma~\ref{im8} and obtain $\ov{b}\ind_{\ov{c}}\ov{a}_1$.
 Another application of Lemma~\ref{im6} gives some $\ov{b}_0\equiv_{\ov{c}}\ov{a}$ such that  $\ov{b}_0\ind_{\ov{c}}\ov{a}f(\ov{a})$. 
Since $\ov{a}\equiv_{\ov{c}}\ov{a}_1$, there is some $\ov{b}_1$ such that $\ov{b}_0\ov{a}\equiv_{\ov{c}}\ov{b}_1\ov{a}_1$.
Lemma~\ref{im5} (monotonicity) and invariance gives $\ov{b}_1\ind_{\ov{c}}\ov{a}_1$. Since $\ov{b}_1\equiv_{\ov{c}}\ov{b}$, by Lemma~\ref{im7} (stationarity) we have $\ov{b}\ov{a}_1\equiv_{\ov{c}} \ov{b}_1\ov{a}_1\equiv_{\ov{c}}\ov{b}_0 \ov{a}$. Since   $E(\ov{b},\ov{a}_1)$  we have  that $E(\ov{b}_0,\ov{a})$.  By symmetry and monotonicity, $\ov{a}\ind_{\ov{c}} \ov{b}_0$ and  $f(\ov{a})\ind_{\ov{c}} \ov{b}_0$. Since $f$ fixes $\ov{c}$,  we have $\ov{a}\equiv_{\ov{c}}f(\ov{a})$. By stationarity,  $\ov{a}\ov{b}_0\equiv_{\ov{c}}f(\ov{a})\ov{b}_0$, which implies that $E(\ov{b}_0,f(\ov{a}))$ and therefore $E(\ov{a},f(\ov{a}))$.

 Now we consider the general case where $\ov{a}$ might  not be closed.  We argue as in the proof of Theorem~5.6 of~\cite{Conant17}: take an enumeration $\ov{a}^\prime$ of the rest of $\langle \ov{a}\rangle$, define $E^\prime(\ov{x},\ov{x}^\prime;\ov{y},\ov{y}^\prime) \leftrightarrow E(\ov{x},\ov{y})$ and observe that  $\ov{a}_E$  and  $(\ov{a},\ov{a}^\prime)_{E^\prime}$ are interdefinable.  We know that there is a tuple $\ov{b}$  such that   $(\ov{a},\ov{a}^\prime)_{E^\prime}\in\dcl(\ov{b})$  and  $\ov{b}\in\bdd( (\ov{a},\ov{a}^\prime)_{E^\prime})$. It follows that $\ov{a}_E\in\dcl(\ov{b})$ and  $\ov{b}\in\bdd(\ov{a}_E)$, and hence $\ov{a}_E$ is eliminable.
\end{proof}

\textbf{Acknowledgements} The first author would like to thank Katie Chicot and Bridget Webb for preliminary discussion of Steiner triple systems. The authors are grateful to Martin Ziegler for helpful suggestions, and to the anonymous referee for a thorough review and numerous helpful comments.
\nocite{ChernikovRamsey16}


\begin{thebibliography}{21}

\bibitem{andersenetal}
{\sc Andersen, L., Hilton, A., and Mendelsohn, E.}
\newblock Embedding partial {S}teiner triple systems.
\newblock {\em Proceedings of the London Mathematical Society 41\/} (1980),
  557--576.
  
 \bibitem{balpao}
{\sc Baldwin, J. and Paolini, G.}
\newblock Strongly minimal {S}teiner systems I: existence.
\newblock 	arXiv:1903.03541, March 2019.

\bibitem{PJCinfinitels}
{\sc Cameron, P.}
\newblock Infinite linear spaces.
\newblock {\em Discrete Mathematics 129\/} (1994), 29--41.

\bibitem{PJCorbit}
{\sc Cameron, P.}
\newblock An orbit theorem for {S}teiner triple systems.
\newblock {\em Discrete Mathematics 125\/} (1994), 97--100.

\bibitem{PJCpermutation}
{\sc Cameron, P.}
\newblock {\em Permutation {G}roups}, volume 45 of
{\em London Mathematical Society Student Texts}. 
\newblock Cambridge University Press, 1999.

\bibitem{cameronwebb}
{\sc Cameron, P., and Webb, B.}
\newblock Perfect countably infinite {S}teiner triple systems.
\newblock {\em Australasian Journal of Combinatorics 54\/} (2012), 273--278.

\bibitem{Casanovas11}
{\sc Casanovas, E.}
\newblock {\em Simple {T}heories and {H}yperimaginaries}, volume~39 of {\em Lecture
  Notes in Logic}.
\newblock Cambridge University Press, 2011.

\bibitem{Cha-Kei90}
{\sc Chang, C., and Keisler, H.}
\newblock {\em Model Theory}, third~ed.
\newblock North Holland, 1990.

\bibitem{Chernikov12}
{\sc Chernikov, A.}
\newblock Theories without the tree property of the second kind.
\newblock {\em Annals of Pure and Applied Logic 165\/} (2014), 695--723.

\bibitem{ChernikovKaplan09}
{\sc Chernikov, A., and Kaplan, I.}
\newblock Forking and dividing in ${NTP}_2$ theories.
\newblock {\em The Journal of Symbolic Logic 77}, 1 (2012), 1--20.

\bibitem{ChernikovRamsey16}
{\sc Chernikov, A., and Ramsey, N.}
\newblock On model-theoretic tree properties.
\newblock {\em Journal of Mathematical Logic 16}, 2 (2016).

\bibitem{chicotetal}
{\sc Chicot, K., Grannell, M., Griggs, T., and Webb, B.}
\newblock On sparse countably infinite {S}teiner triple systems.
\newblock {\em Journal of Combinatorial Designs 18}, 2 (2010), 115--122.

\bibitem{colbournrosa}
{\sc Colbourn, C., and Rosa, A.}
\newblock {\em Triple Systems}.
\newblock Oxford University Press, 1999.

\bibitem{Conant17}
{\sc Conant, G.}
\newblock An axiomatic approach to free amalgamation.
\newblock {\em The Journal of Symbolic Logic 82\/} (2017), 648--671.

\bibitem{ConantKruckman17}
{\sc Conant, G., and Kruckman, A.}
\newblock Independence in generic incidence structures.
\newblock arXiv:1709.09626, September 2017.

\bibitem{Doyen69}
{\sc Doyen, J.}
\newblock Sur la structure de certains syst\`emes triples de {S}teiner.
\newblock {\em Mathematische Zeitschrift 111\/} (1969), 289--300.

\bibitem{KaplanRamsey17}
{\sc Kaplan, I., and Ramsey, N.}
\newblock On {K}im-independence.
\newblock arXiv:1702.03894. To appear in the {J}ournal of the {E}uropean
  {M}athematical {S}ociety, July 2017.

\bibitem{KaplanRamseyShelah17}
{\sc Kaplan, I., Ramsey, N., and Shelah, S.}
\newblock Local character of {K}im-independence.
\newblock arXiv:1707.02902v2, July 2017.


\bibitem{lindner}
{\sc Lindner, C.}
\newblock A survey of embedding theorems for {S}teiner systems.
\newblock {\em Annals of Discrete Mathematics 7\/} (1980), 175--202.


\bibitem{Sh80a}
{\sc Shelah, S.}
\newblock Simple unstable theories.
\newblock {\em Annals of Mathematical Logic 19\/} (1980), 177--203.

\bibitem{thomas}
{\sc Thomas, S.}
\newblock Triangle transitive {S}teiner triple systems.
\newblock {\em Geometriae Dedicata 21\/} (1986), 19--20.


\bibitem{treash}
{\sc Treash, C.}
\newblock The completion of finite incomplete {S}teiner triple systems with
  applications to loop theory.
\newblock {\em Journal of Combinatorial Theory Series A 10\/} (1971).

\end{thebibliography}

\noindent{\sc
School of Mathematics and Statistics\\
 The Open University\\
{\tt silvia.barbina@open.ac.uk}

\noindent{\sc
Departament de Matem\`atiques i Inform\`atica\\
Universitat de Barcelona}\\
{\tt e.casanovas@ub.edu}\\

\end{document}